\setlist{topsep=3pt,partopsep=0pt,itemsep=1pt,parsep=0pt}
\newtheorem{Theorem}{Theorem}[section]
\newtheorem{Corollary}[Theorem]{Corollary}
\newtheorem{Conjecture}[Theorem]{Conjecture}
\newtheorem{Lemma}[Theorem]{Lemma}
\def \leq {\leqslant}
\def \geq {\geqslant}
\let\oldproofname=\proofname
\renewcommand{\proofname}{\rm\bf{\oldproofname}}
\numberwithin{equation}{section}
\begin{document}

\title{Progress towards generalized Nash-Williams' conjecture on $K_4$-decompositions}

\author[a]{Menglong Zhang}
\author[b]{Gennian Ge}
\affil[a]{Institute of Mathematics and Interdisciplinary Sciences, Xidian University, Xi'an, 710126, P.R. China}
\affil[b]{School of Mathematical Sciences, Capital Normal University, Beijing 100048, P.R. China}
\affil[ ]{mlzhang@bjtu.edu.cn; gnge@zju.edu.cn}

\date{}
\maketitle

\footnotetext{Gennian Ge is supported by the National Key Research and Development Program of China under Grant 2020YFA0712100, the National Natural Science Foundation of China under Grant 12231014, and Beijing Scholars Program.}

%%%%%%%%%%%%%%%%%%%%%%%%%%%%%%%%%%%%%%%%%%%%%%%%%%%%%%%%%%%%%%%%%%%%%%%%%%%%%%%%%%
%%%%%%%%%%%%%%%%%%%%%%%%%%%%%%%%%%%%%%%%%%%%%%%%%%%%%%%%%%%%%%%%%%%%%%%%%%%%%%%%%%

\begin{abstract}
A $K_4$-decomposition of a graph is a partition of its edges into $K_4$s. A fractional $K_4$-decomposition is an assignment of a nonnegative weight to each $K_4$ in a graph such that the sum of the weights of the $K_4$s containing any given edge is one. Formulating a nonlinear programming and reducing the number of variables slowly, we prove that every graph on $n$ vertices with minimum degree at least $\frac{31}{33}n$ has a fractional $K_4$-decomposition. This improves a result of Montgomery that the same conclusion holds for graphs with minimum degree at least $\frac{399}{400}n$. Together with a result of Barber, K\"uhn, Lo, and Osthus, this result implies that for all $\varepsilon> 0$, every large enough $K_4$-divisible graph on $n$ vertices with minimum degree at least  $(\frac{31}{33}+\varepsilon)n$ admits a $K_4$-decomposition.
\end{abstract}

\noindent {\bf Keywords}: graph decomposition; Nash-Williams' Conjecture; fractional graph decomposition; nonlinear programming

%%%%%%%%%%%%%%%%%%%%%%%%%%%%%%%%%%%%%%%%%%%%%%%%%%%%%%%%%%%%%%%%%%%%%%%%%%%%%%%%%%
%%%%%%%%%%%%%%%%%%%%%%%%%%%%%%%%%%%%%%%%%%%%%%%%%%%%%%%%%%%%%%%%%%%%%%%%%%%%%%%%%%

\section{Introduction}

Let $G$ and $F$ be graphs. An {\em $F$-decomposition} of $G$ is a collection $\mathcal F$ of copies of $F$ in $G$ such that every edge of $G$ is contained in exactly one of these copies.  Let $V(G)$ and $E(G)$ be the set of vertices and edges in $G$, respectively. Let $d_G(v)$ be the {\em degree} of $v$ in $G$ and $\gcd(G)=\gcd\{d_G(v):v\in V(G)\}$ be the greatest common divisor of degrees of the vertices in $G$. Let $e(G)=|E(G)|$ be the number of edges in $G$. Note that $\gcd(F)|\gcd(G)$ and $e(F)|e(G)$ are necessary for the existence of an $F$-decomposition of $G$. If $F$ and $G$ satisfy these two divisibility conditions, then we say that $G$ is {\em $F$-divisible}.

The graph decomposition problem is one of central and classical problems in combinatorics. Kirkman \cite{Kirkman} in 1847 showed that $K_n$ which is $K_3$-divisible has a $K_3$-decomposition, where $K_n$ is the complete graph on $n$ vertices. That is, Kirkman determined the necessary and sufficient condition for the existence of a $K_3$-decomposition of $K_n$. Hanani \cite{Hanani} in 1961 determined the necessary and sufficient condition for the existence of a $K_4$-decomposition of $K_n$. Wilson \cite{Wilson76} stated that for all sufficiently large $n$, if the complete graph $K_n$ is $F$-divisible, then it has an $F$-decomposition.

For an arbitrary graph $G$, the problem of determining whether $G$ admits an $F$-decomposition is much more difficult. It was shown by Dor and Tarsi \cite{DT97} that the problem of determining whether a graph $G$ has an $F$-decomposition is NP-complete. Therefore, it is nature to determine whether all sufficiently dense graphs which are $F$-divisible admit an $F$-decomposition. The most famous conjecture in the area is the Nash-Williams' Conjecture from 1970. Let $\delta(G)=\min\{d_G(v):v\in V(G)\}$ be the {\em minimum degree} of a graph $G$. Nash-Williams \cite{Nash-Williams} conjectured that for every sufficiently large $n$, all $K_3$-divisible graphs $G$ on $n$ vertices with $\delta(G)\geq\frac{3n}{4}$ have a $K_3$-decomposition. The first progress towards the Nash-Williams' Conjecture was made by Gustavsson \cite{Gustavsson} in 1991. He showed that for each integer $r\geq3$, there exist two constant $\varepsilon(r)$ and $N(r)$ such that every $K_r$-divisible graph $G$ on $n>N(r)$ vertices satisfying minimum degree $\delta(G)\geq(1-\varepsilon(r))n$ admits a $K_r$-decomposition. Gustavsson \cite{Gustavsson} generalized the Nash-Williams' Conjecture to $K_r$-decompositions for $r\geq3$ as follows.

\begin{Conjecture}\label{conj:Nash-Williams_conj}
For every $r\geq3$, there exists an $N=N(r)$ such that every $K_r$-divisible graph $G$ on $n\geq N$ vertices with $\delta(G)\geq(1-\frac{1}{r+1})n$ has a $K_r$-decomposition.
\end{Conjecture}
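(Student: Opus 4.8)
The plan is to prove the conjecture through the two-step scheme that has become standard for decomposition problems: reduce the existence of a genuine $K_r$-decomposition to that of a \emph{fractional} $K_r$-decomposition, and then establish the fractional statement at the conjectured threshold. For the first step I would invoke the iterative absorption machinery of Barber, K\"uhn, Lo and Osthus (and its refinements): if every graph $H$ on $m$ vertices with $\delta(H)\geq(1-\tfrac{1}{r+1}+\varepsilon)m$ admits a fractional $K_r$-decomposition, then every large $K_r$-divisible graph $G$ with $\delta(G)\geq(1-\tfrac{1}{r+1}+\varepsilon)n$ admits a genuine $K_r$-decomposition. Thus, up to an arbitrarily small additive slack, the conjecture reduces to the bound $\delta^{*}_{K_r}\leq 1-\tfrac{1}{r+1}$ for the fractional $K_r$-decomposition threshold $\delta^{*}_{K_r}$; closing the residual $\varepsilon$ is treated separately below.

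The crux is therefore the fractional relaxation. I would write down the linear program whose feasibility is equivalent to $G$ having a fractional $K_r$-decomposition and pass to its dual: such a decomposition fails exactly when there is an edge-weighting $w\colon E(G)\to\mathbb{R}$ with $\sum_{e\in K}w(e)\leq 0$ for every copy $K$ of $K_r$ in $G$ yet $\sum_{e\in E(G)}w(e)>0$, and the task is to rule this out. Since $\delta(G)\geq(1-\tfrac{1}{r+1})n$, the common neighbourhood of any edge $uv$ has at least $\tfrac{r-1}{r+1}n$ vertices and relative minimum degree at least $1-\tfrac{1}{r-1}$, so by Tur\'an's theorem and supersaturation every edge lies in $\Omega(n^{r-2})$ copies of $K_r$, and the ``link'' around each edge is rich enough that a unit of weight on one edge can be transported, through short alternating chains of $K_r$'s, onto any other edge with arbitrarily small loss. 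Turning this intuition into a valid averaging certificate is precisely the content of the nonlinear-programming / slow variable-elimination method of the present paper for $r=4$, and of the earlier fractional-decomposition arguments of Montgomery and others for small $r$; I would push these methods to their limit, aiming to drive the constant all the way down to $1-\tfrac{1}{r+1}$, the value at which the extremal (space-barrier) constructions live.

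To obtain the conjecture in exact form I would run a stability dichotomy for the $\varepsilon$ regime. Either $G$ is $\Omega(n^{2})$-far in edit distance from every graph meeting the extremal lower-bound construction for the $K_r$-decomposition threshold, in which case the absorption step has slack to spare and a more careful accounting lets one take $\varepsilon=0$; or $G$ is close to an extremal configuration, in which case its structure is essentially pinned down and one builds the $K_r$-decomposition directly, as in the exact treatments of Wilson-type and Nash--Williams-type theorems, checking that in this last regime $K_r$-divisibility is already sufficient.

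The main obstacle is, unambiguously, the fractional threshold bound $\delta^{*}_{K_r}\leq 1-\tfrac{1}{r+1}$. This is itself a notorious open problem already for $r=3$ --- where the best known bound sits well above the conjectured $\tfrac34$ --- and for $r\geq 4$ the gap is far wider (this paper reaches only $\tfrac{31}{33}$ for $r=4$). The weight-transport arguments lose a constant factor at every relay step and the known analyses do not obviously have the efficiency to reach the sharp constant; a genuinely new idea seems to be needed, perhaps a global duality certificate or an entropy/flow argument in place of edge-by-edge balancing. By comparison, the absorption reduction is in hand and the concluding stability analysis, though technical, follows a well-trodden route.
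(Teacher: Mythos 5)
The statement you are addressing is Conjecture~\ref{conj:Nash-Williams_conj}, not a theorem of the paper: the authors state it as an open (Gustavsson-generalized) conjecture and never prove it, so there is no in-paper argument to compare your proposal against. More importantly, your plan cannot succeed as written, because the conjecture is now known to be \emph{false}. The paper itself cites the recent work of Delcourt, Henderson, Lesgourgues and Postle \cite{DHLP25_1}, who construct, for each $r\ge 4$, a constant $c>1$ and infinitely many graphs $G$ on $n$ vertices with $\delta(G)\ge\bigl(1-\tfrac{1}{c(r+1)}\bigr)n$ --- strictly above the conjectured threshold --- that admit no $K_r$-decomposition and not even a fractional $K_r$-decomposition. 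In particular the key inequality you single out as the crux, $\delta^{*}_{K_r}\le 1-\tfrac{1}{r+1}$, is false for every $r\ge 4$, so no amount of sharpening of the weight-transport, duality, or nonlinear-programming machinery can reach it; the obstruction is not technical inefficiency but a genuine counterexample.

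The absorption reduction you describe (fractional implies exact, up to an $\varepsilon n$ degree loss, via Barber--K\"uhn--Lo--Osthus and Theorem~\ref{thm:frac->exact}) is indeed correct and is exactly what this paper uses to convert Theorem~\ref{thm:frac_K_4_decom} into Theorem~\ref{thm:K_4_decom}. But the paper's actual goal is only to push the \emph{upper bound} on $\delta^{*}_{K_4}$ from Montgomery's $1-\tfrac{1}{400}$ down to $1-\tfrac{2}{33}$; it makes no claim of reaching $1-\tfrac{1}{5}$. A correct reading of the situation is that the ``right'' value of $\delta^{*}_{K_r}$ is now open in both directions and strictly above $1-\tfrac{1}{r+1}$ for $r\ge 4$, so your proposed stability dichotomy for removing the $\varepsilon$ has nothing to converge to. If you want a provable target, the appropriate statement is the paper's Theorem~\ref{thm:W'_G(O)<=1} (that a minimum degree of $(1-\tfrac{2}{33})n$ forces $W'_G(O)\le 1$), established by the chain of programmings (P1)--(P12); the conjecture itself should be treated as refuted.
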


Barber, K\"{u}hn, Lo, and Osthus \cite{BKLO16} gave an approach to converting a fractional decomposition to an exact decomposition.
%Their result is improved by Glock, K\"{u}hn, Lo, Montgomery and Osthus \cite{GKLMO19}. Before stating the result in \cite{GKLMO19}, we need the following definitions.
%Let $\binom{G}{F}$ be the set of copies of $F$ in $G$. A {\em fractional $F$-decomposition} of $G$ is a function $w:\binom{G}{F}\rightarrow\mathbb{R}$ such that for all $e\in E(G)$, $$\sum_{F'\in\binom{G}{F}: e\in E(F')}w(F')=1.$$
Let $G$ and $F$ be graphs, and $\mathcal F(G)$ be the set of copies of $F$ in $G$. A {\em fractional $F$-decomposition} of $G$ is a function $w:\mathcal F(G)\rightarrow[0,1]$ such that for all $e\in E(G)$, $$\sum_{F'\in\mathcal F(G): e\in E(F')}w(F')=1.$$
Note that an $F$-decomposition is a fractional $F$-decomposition with image $\{0,1\}$.
Let $\delta^*_F(n)$ be the least $c > 0$ such that any graph $G$ on $n$ vertices with minimum degree $\delta(G) > cn$ admits a fractional $F$-decomposition, and $\delta^*_F=\limsup\limits_{n\rightarrow\infty} \delta^*_F(n)$ be a {\em fractional F-decomposition threshold}.
A {\em proper coloring} of $F$ is a map $c:V(F)\rightarrow C$ satisfying $c(v)\not= c(w)$ whenever $v,w\in V(F)$ are adjacent. The {\em chromatic number} $\chi(F)$ of $F$ is the smallest integer $k$ such that there is a proper coloring of $F$ with $k$ colors.
%The {\em chromatic number} $\chi(F)$ of $F$ is the smallest integer $k$ such that there is a map $c:V(F)\rightarrow\{1,2,\dots,k\}$ satisfying $c(v)\not= c(w)$ whenever $v\in V(F)$ and $w\in V(F)$ are adjacent.
Glock, K\"{u}hn, Lo, Montgomery and Osthus \cite{GKLMO19} stated the following theorem improving the result in \cite{BKLO16}.
Very recently, Delcourt, Henderson, Lesgourgues and Postle \cite{DHLP25} provided a weak version of the following theorem.
\begin{Theorem}\label{thm:frac->exact}{\rm \cite{GKLMO19}}
Let $\varepsilon> 0$ and $F$ be a graph with chromatic number $\chi=\chi(F)\geq3$. There exists a constant $N$ such that every $F$-divisible graph $G$ on $n>N$ vertices with minimum degree
$$\delta(G)\geq\left(\max\left\{\delta^*_{K_\chi},1-\frac{1}{\chi+1}\right\}+\varepsilon\right)n$$
admits an $F$-decomposition.
\end{Theorem}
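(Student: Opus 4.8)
The plan is to prove the theorem by \emph{iterative absorption}, reducing an exact $F$-decomposition of $G$ to (i) the repeated use of \emph{fractional} $F$-decompositions inside dense subgraphs and (ii) a local \emph{absorbing} gadget that disposes of a bounded-size remainder. First I would record two preliminary reductions: we may assume $F$ is connected, and we may replace the requirement "find a fractional $F$-decomposition" of an $m$-vertex subgraph by the cleaner hypothesis "minimum degree at least $(\delta^*_{K_\chi}+\tfrac{\varepsilon}{2})m$". Indeed, since $F$ is $\chi$-chromatic it embeds into a blow-up $K_\chi[t]$, so a graph of high minimum degree can be handled by passing to a random balanced $\chi$-partition, invoking the $K_\chi$-fractional threshold in an appropriate (multipartite, weighted) form, and then expanding each $K_\chi$ into a fractional $F$-decomposition of $K_\chi[t]$ followed by an equalizing correction on the leftover edges. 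This is exactly why $\delta^*_{K_\chi}$, and not $\delta^*_F$, appears in the statement; henceforth I would use the fractional decomposition only as a black box, re-derived inside the iteration whenever needed, since a minimum-degree condition is (up to $o(1)$) inherited by typical vertex subsets.

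Then I would build the core structure. Fix a small constant $\rho>0$ and a large constant $m=m(F,\varepsilon)$ and construct a \emph{vortex}: a nested sequence $V(G)=U_0\supseteq U_1\supseteq\cdots\supseteq U_\ell$ with $U_{i+1}$ a random subset of $U_i$ of size $\lceil\rho|U_i|\rceil$ and $|U_\ell|=m$, so that with high probability every $U_i$ inherits the minimum-degree condition (relative to the same constant, up to $o(1)$) both globally and "through every edge." Before iterating, set aside a bounded family $\A$ of pairwise edge-disjoint \emph{absorbers} $A_{xy}$, one for each pair $xy$ of vertices of $U_\ell$, each of constant size and such that both $A_{xy}$ and $A_{xy}\cup\{xy\}$ admit $F$-decompositions; constructing these disjointly uses the hypothesis $\delta(G)\ge(1-\tfrac{1}{\chi+1}+\tfrac{\varepsilon}{2})n$, which guarantees through every edge an abundance of copies of $F$ (equivalently, via a transference, of copies of $K_{\chi+1}$). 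Delete the edges of $\A$ from $G$ (a negligible perturbation of all degrees) and iterate, for $i=0,1,\dots,\ell-1$, on the current uncovered graph $G_i$ (with $G_0=G-E(\A)$): first a R\"odl-nibble / semi-random argument seeded by a fractional $F$-decomposition of $G_i[U_i]$ yields an integral partial $F$-decomposition covering all but a sparse, bounded-maximum-degree remainder $L_i$; then a \emph{Cover-Down Lemma} covers $L_i$ together with \emph{every} edge of $G_i$ meeting $U_i\setminus U_{i+1}$, using copies of $F$ lying inside $G_i$, so that the new uncovered graph $G_{i+1}$ is contained in $G[U_{i+1}]$. Since each $G_{i+1}$ is obtained from the $F$-divisible graph $G_i$ by deleting copies of $F$, it is again $F$-divisible; thus after $\ell$ rounds $G_\ell$ is an $F$-divisible graph on the $m$-set $U_\ell$. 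Finally, decompose $\bigcup_{xy}A_{xy}\cup G_\ell$ by using the $F$-decomposition of $A_{xy}\cup\{xy\}$ when $xy\in E(G_\ell)$ and of $A_{xy}$ otherwise; the union of this with all the partial decompositions produced along the way is an $F$-decomposition of $G$.

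I expect the \emph{Cover-Down Lemma} to be the main obstacle. Covering a prescribed set of edges by copies of $F$ that stay inside $G_i$ and whose removal leaves a remainder entirely within $U_{i+1}$ cannot be done edge by edge, and it interacts with divisibility: one must first carry out a bounded "divisibility correction," replacing $G_i$ by $G_i$ minus a bounded number of carefully chosen copies of $F$ so that the targeted remainder lies in the integer lattice spanned by the degree-vectors of copies of $F$. The feasibility of both the correction and the covering is precisely what forces the term $1-\tfrac{1}{\chi+1}$, since for each edge to be covered one needs many copies of $F$ extending it and avoiding $U_{i+1}$. The remaining technical burden — guaranteeing that the nibble leftover $L_i$ is pseudorandom and sparse enough for the Cover-Down Lemma, and building the absorbers $A_{xy}$ so that they are simultaneously edge-disjoint, flexible enough to absorb every possible remainder, and sparse enough not to disturb the degree hypothesis — is substantial, but is exactly what the high-minimum-degree assumption is there to make routine.
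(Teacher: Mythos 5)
The paper does not prove this theorem; it is quoted verbatim from Glock, K\"uhn, Lo, Montgomery, and Osthus \cite{GKLMO19} and used as a black box. Your sketch is a faithful high-level outline of the iterative-absorption proof given there (vortex, set-aside edge-disjoint absorbers $A_{xy}$ with both $A_{xy}$ and $A_{xy}+xy$ $F$-decomposable, nibble on the fractional relaxation, Cover-Down step pushing the leftover into the next vortex set, and the role of $1-\tfrac{1}{\chi+1}$ versus $\delta^*_{K_\chi}$), so in that sense it matches the intended argument rather than offering an alternative route. Note only that this is genuinely a sketch: the reduction from the $\delta^*_F$ threshold to $\delta^*_{K_\chi}$ and the Cover-Down Lemma (including the divisibility correction) are each substantial papers'-worth of work in \cite{GKLMO19} and \cite{BKLO16}, so a fully self-contained proof here would essentially reproduce those references.
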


For a graph $F$ with $\chi=\chi(F)$, Theorem \ref{thm:frac->exact} implies that determining $\delta^*_{K_\chi}$ is a key ingredient to determine the existence of $F$-decompositions of dense graphs. Yuster \cite{Yuster05} in 2005 showed that $\delta^*_{K_r}\leq1-\frac{1}{9r^{10}}$ for any integer $r\geq3$. In 2012, Dukes \cite{Dukes12} improved this result to $\delta^*_{K_r}\leq1-\frac{2}{9r^2(r-1)^2}$. Barber, K\"uhn, Lo, Montgomery and Osthus \cite{BKLMO17} determined a better bound: $\delta^*_{K_r}\leq1-\frac{1}{10^4r^{3/2}}$. The best current bound on the fractional $K_r$-decomposition threshold for $r\geq4$ is $\delta^*_{K_r}\leq1-\frac{1}{100r}$ by Montgomery \cite{Montgomery19}. For the $K_3$ case, some better bounds are known. In her thesis, Garaschuk \cite{Garaschuk} in 2014 stated that $\delta^*_{K_3}\leq\frac{22}{23}$. Dross \cite{Dross} determined a better bound: $\delta^*_{K_3}\leq 0.9$ by the min-flow max-cut theorem. Dukes and Horsley \cite{DH20} improved the above result to $\delta^*_{K_3}\leq 0.852$ by refining the method used by Dross. In 2021, Delcourt and Postle \cite{DP21} gave the best current bound $\delta^*_{K_3}\leq \frac{7+\sqrt{21}}{14}\approx0.82733$. Very recently, Delcourt, Henderson, Lesgourgues, and Postle \cite{DHLP25_1} disproved Conjecture \ref{conj:Nash-Williams_conj} and showed that there are $c>1$ and infinitely many graphs $G$ on $n$ vertices with minimum degree at least $(1-\frac{1}{c(r+1)})n$ with no $K_r$-decomposition and even no fractional $K_r$-decomposition. This result and Theorem \ref{thm:frac->exact} make determining the correct value of $\delta^*_{K_r}$ a more fascinating problem.

This paper is devoted to improving the fractional $K_4$-decomposition threshold. Recall that the best result of the fractional $K_4$-decomposition threshold is $\delta^*_{K_4}\leq1-\frac{1}{400}$ by Montgomery. We improve this result to $\delta^*_{K_4}\leq1-\frac{2}{33}$ as follows.
%Our main theorem is as follows:
\begin{Theorem}\label{thm:frac_K_4_decom}
If $G$ is a graph on $n$ vertices with minimum degree $\delta(G)\geq\frac{31}{33}n$, then $G$ admits a fractional $K_4$-decomposition.
\end{Theorem}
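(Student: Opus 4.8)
The plan is to follow the linear-programming strategy of Yuster, refined by Dukes and by Montgomery, and to obtain the improved bound by analysing the underlying construction through a carefully simplified nonlinear program.

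\medskip

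\textbf{Reformulation.} A fractional $K_4$-decomposition of $G$ is exactly a solution of the system $\sum_{K\,:\,e\in E(K)} w_K = 1$ for every $e\in E(G)$, together with $w_K\geq 0$, where $K$ runs over the copies of $K_4$ in $G$ (the equalities force $w_K\leq 1$ automatically). By LP duality, this system is infeasible only if there is a weighting $y\colon E(G)\to\mathbb{R}$ with $\sum_{e\in E(K)} y_e\geq 0$ for every copy $K$ of $K_4$ but $\sum_{e\in E(G)} y_e<0$; so it suffices, for the given $G$, to exhibit an explicit nonnegative weighting of the $K_4$'s with the prescribed edge-sums. I would construct such a weighting directly. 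The uniform choice $w_K=\binom{n-2}{2}^{-1}$ is the correct fractional $K_4$-decomposition of $K_n$, but on an edge $uv$ of $G$ it yields total weight only $e\bigl(G[N(u)\cap N(v)]\bigr)/\binom{n-2}{2}\leq 1$, with a deficit that $\delta(G)\geq\frac{31}{33}n$ keeps under control but does not make negligible; the construction must therefore be genuinely adjusted.

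\medskip

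\textbf{Parametrised weighting and the nonlinear program.} The heart of the argument is to assign to a copy of $K_4$ on a $4$-set $\{a,b,c,d\}$ a weight that is a fixed expression in a few real coefficients and in local statistics of $G$ attached to that set — the codegrees of its six edges, the numbers of $K_4$'s through each edge, and similar local counts — designed so that the edge-sums come out identically $1$. Imposing this identity together with nonnegativity of every weight, uniformly over all graphs $G$ with $\delta(G)\geq cn$, becomes — once each local count is replaced by the range of values that the degree hypothesis allows — a finite nonlinear program whose optimal value is the sought threshold $c$. The phrase ``reducing the number of variables slowly'' refers to solving this program step by step: fixing the optimal value of one coefficient at a time, discarding constraints that have become redundant, and pinning down which local configurations are genuinely extremal, until what remains is small enough to be solved explicitly and shown to admit $c=\frac{31}{33}$.

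\medskip

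\textbf{Main obstacle.} The decisive difficulty is this nonlinear program: it is non-convex, and its extremal configurations are not evident in advance. They correspond to graphs whose missing edges are arranged in a structured way — roughly, near-disjoint unions of a few large cliques, or blow-up-type constructions — rather than spread out, and the parametrised weighting must be chosen so as not to concede a constant factor there. A secondary, bookkeeping-type issue is to guarantee that the local ``trades'' realising the construction are available at every edge of $G$: each edge must lie in many $K_4$'s, and in enough rich local configurations to carry a trade with bounded coefficients; this follows from $\delta(G)\geq\frac{31}{33}n$ but has to be tracked quantitatively so that the resulting weights stay nonnegative. Finally, the near-extremal instances should be verified by hand, since these are where the value $\frac{31}{33}$ is expected to be tight for this method and where any slack lost in the reduction would first become visible.
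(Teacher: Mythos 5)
Your proposal describes a meta-strategy but omits the construction that carries the whole argument. You say ``I would construct such a weighting directly'' and then gesture at a ``parametrised weighting'' with ``a few real coefficients'' to be tuned, but you never specify it. That specification \emph{is} the theorem. The paper's construction is not a tunable family: it assigns to each $K_4$ a fixed explicit weight obtained by averaging an \emph{edge-gadget} over copies of $K_6$. Concretely, for a copy $K$ of $K_6$ containing an edge $e$, the gadget $\psi_{K,e}$ puts weight $\tfrac12$ on each $K_4\subseteq K$ disjoint from $e$, $-\tfrac16$ on those meeting $e$ in one vertex, and $\tfrac16$ on those containing $e$, so that summing $\psi_{K,e}$ over the $K_4$'s through any edge $f\subseteq K$ gives $[f=e]$ (Lemma~\ref{lem:edge_gadget}). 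One then sets
\[
W_G(T)=\tfrac12\!\!\sum_{(v_1,\dots,v_6)\in\mathcal{OK}_6(G)}\!\! W(v_1,\dots,v_5)\,\psi_{G[\{v_1,\dots,v_6\}],\{v_1,v_2\}}(T),
\]
with $W(v_1,\dots,v_5)=\prod_{i=2}^{5}|N(v_1,\dots,v_i)|^{-1}$. The edge-sums telescope to $1$ exactly because $W$ is built from reciprocal common-neighborhood sizes (Theorem~\ref{thm:frac_dec_equ=1}); nothing remains to be tuned there. Without this (or an equivalent) construction, your plan has no object whose nonnegativity can be analysed.

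A second, related gap is that your nonlinear program is the wrong one. You envision an NLP whose optimum is the threshold $c$ and whose variables are coefficients of a weighting and local graph statistics; the paper's NLP instead bounds, over all admissible neighborhood-density profiles, a normalised quantity $W_G'(O)$ equal to $1-\tfrac{12}{W(x_1,x_2,x_3)}W_G(O)$ for an ordered $K_4$, with variables being common-neighborhood densities such as $\widehat N(x_1,x_2)$, $\widehat N(x_1,y_i,z_{i,j})$, etc., constrained by monotonicity and inclusion--exclusion (Lemmas~\ref{lem:decrease_N(K)}--\ref{lem:N(S)_3}). ``Reducing the number of variables slowly'' means successively replacing that objective with pointwise upper bounds (via ramp functions and extremal-point arguments) until a one-variable polynomial in $d=1-\delta(G)/n$ remains, then checking it is $<1$ on $[0,2/33]$ by elementary calculus. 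Your ``extremal configurations'' picture (near-disjoint unions of cliques, blow-ups) plays no role and is not how the bound is located. Finally, the LP-duality framing you open with is a red herring here: the proof exhibits an explicit fractional decomposition directly and never passes to a dual certificate.
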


Combining Theorem \ref{thm:frac->exact} with our result, we have the following progress on Conjecture \ref{conj:Nash-Williams_conj}:
\begin{Theorem}\label{thm:K_4_decom}
Let $\varepsilon> 0$. There is a constant $N$ such that every $K_4$-divisible graph $G$ on $n>N$ vertices with minimum degree $\delta(G)\geq(\frac{31}{33}+\varepsilon)n$ has a $K_4$-decomposition.
\end{Theorem}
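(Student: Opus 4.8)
The plan is to deduce Theorem \ref{thm:K_4_decom} directly by feeding the fractional bound of Theorem \ref{thm:frac_K_4_decom} into the fractional-to-exact transfer result, Theorem \ref{thm:frac->exact}. Since $K_4$ is the object we wish to decompose into, set $F=K_4$; then $\chi=\chi(K_4)=4\geq3$, so Theorem \ref{thm:frac->exact} applies with this choice of $F$.

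First I would translate Theorem \ref{thm:frac_K_4_decom} into a statement about the threshold $\delta^*_{K_4}$. By definition $\delta^*_{K_4}(n)$ is the least $c>0$ for which every $n$-vertex graph with minimum degree exceeding $cn$ has a fractional $K_4$-decomposition; since Theorem \ref{thm:frac_K_4_decom} guarantees such a decomposition already when $\delta(G)\geq\frac{31}{33}n$, we get $\delta^*_{K_4}(n)\leq\frac{31}{33}$ for every $n$, and hence $\delta^*_{K_4}=\limsup_{n\to\infty}\delta^*_{K_4}(n)\leq\frac{31}{33}$.

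Next I would compute the degree bound produced by Theorem \ref{thm:frac->exact} for $F=K_4$. With $\chi=4$ we have $1-\frac{1}{\chi+1}=\frac{4}{5}$, and since $\frac{4}{5}<\frac{31}{33}$ together with $\delta^*_{K_4}\leq\frac{31}{33}$, it follows that
$$\max\left\{\delta^*_{K_4},\,1-\tfrac{1}{\chi+1}\right\}\leq\frac{31}{33}.$$
Therefore Theorem \ref{thm:frac->exact}, applied with this $F$ and the given $\varepsilon>0$, yields a constant $N$ such that every $K_4$-divisible graph $G$ on $n>N$ vertices with
$$\delta(G)\geq\left(\frac{31}{33}+\varepsilon\right)n\geq\left(\max\left\{\delta^*_{K_4},1-\tfrac{1}{\chi+1}\right\}+\varepsilon\right)n$$
admits a $K_4$-decomposition, which is exactly the assertion of Theorem \ref{thm:K_4_decom}.

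There is no serious obstacle here: the entire content lies in Theorem \ref{thm:frac_K_4_decom}, whose proof is the technical heart of the paper, while the present deduction is a one-line combination of that bound with the black-box transfer theorem. The only points requiring (minor) care are the direction of the inequality in the definition of $\delta^*_{K_4}$ (the non-strict ``$\geq$'' in Theorem \ref{thm:frac_K_4_decom} versus the strict ``$>$'' in the definition, which causes no problem since $\delta(G)>\frac{31}{33}n$ certainly implies $\delta(G)\geq\frac{31}{33}n$) and checking numerically that $\frac{31}{33}$ dominates $\frac{4}{5}$, so that the maximum appearing in Theorem \ref{thm:frac->exact} is controlled by the fractional threshold rather than by the $1-\frac{1}{\chi+1}$ term.
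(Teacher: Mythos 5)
Your proposal is correct and is exactly the paper's intended argument: the paper states Theorem \ref{thm:K_4_decom} as an immediate corollary of combining Theorem \ref{thm:frac->exact} (applied with $F=K_4$, $\chi=4$) with Theorem \ref{thm:frac_K_4_decom}, and you have simply spelled out that one-line combination, including the correct observation that Theorem \ref{thm:frac_K_4_decom} gives $\delta^*_{K_4}\leq\frac{31}{33}$ and that $\frac{31}{33}>\frac{4}{5}=1-\frac{1}{\chi+1}$, so the maximum in Theorem \ref{thm:frac->exact} is controlled by the fractional threshold.
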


%Note that $1-\frac{2}{33}=0.9\dot{3}\dot{9}$ and Conjecture \ref{conj:Nash-Williams_conj} implies the desired lower bound is $0.8$.
In fact, the more general theorem is obtained by Theorems \ref{thm:frac->exact} and \ref{thm:frac_K_4_decom}.
\begin{Theorem}\label{thm:F_decom_ch=4}
Let $\varepsilon> 0$ and $F$ be a graph with chromatic number $\chi(F)=4$. There exists a constant $N$ such that every $F$-divisible graph $G$ on $n>N$ vertices with minimum degree
$\delta(G)\geq(\frac{31}{33}+\varepsilon)n$
admits an $F$-decomposition.
\end{Theorem}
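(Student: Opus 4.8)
The proof of Theorem~\ref{thm:F_decom_ch=4} is almost immediate once Theorems~\ref{thm:frac->exact} and~\ref{thm:frac_K_4_decom} are in hand, so the plan is simply to combine them and check that the numerics line up. First I would specialize Theorem~\ref{thm:frac->exact} to a graph $F$ with $\chi=\chi(F)=4$: the conclusion is that there is a constant $N$ such that every $F$-divisible graph $G$ on $n>N$ vertices with
$$\delta(G)\geq\left(\max\left\{\delta^*_{K_4},\,1-\tfrac{1}{5}\right\}+\varepsilon\right)n$$
admits an $F$-decomposition. So the only thing that needs to be done is to bound the quantity $\max\{\delta^*_{K_4},\,4/5\}$ from above by $\tfrac{31}{33}$.

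Next I would invoke Theorem~\ref{thm:frac_K_4_decom}, which says precisely that every graph on $n$ vertices with minimum degree at least $\tfrac{31}{33}n$ has a fractional $K_4$-decomposition; by the definition of the fractional $K_4$-decomposition threshold this gives $\delta^*_{K_4}(n)\le\tfrac{31}{33}$ for all $n$, hence $\delta^*_{K_4}=\limsup_{n\to\infty}\delta^*_{K_4}(n)\le\tfrac{31}{33}$. (One should be a touch careful about whether the definition uses $\delta(G)>cn$ or $\ge cn$, but since we are adding a slack of $\varepsilon n$ in the hypothesis of Theorem~\ref{thm:frac->exact} this distinction is harmless: $\delta(G)\ge(\tfrac{31}{33}+\varepsilon)n>\tfrac{31}{33}n$ already forces a fractional decomposition.) Then I would observe the trivial inequality $1-\tfrac15=\tfrac{4}{5}=\tfrac{132}{165}<\tfrac{155}{165}=\tfrac{31}{33}$, so that
$$\max\left\{\delta^*_{K_4},\,1-\tfrac15\right\}\le\tfrac{31}{33}.$$

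Putting these together: given $\varepsilon>0$ and $F$ with $\chi(F)=4$, apply Theorem~\ref{thm:frac->exact} to obtain $N=N(\varepsilon,F)$; then for every $F$-divisible graph $G$ on $n>N$ vertices with $\delta(G)\ge(\tfrac{31}{33}+\varepsilon)n\ge(\max\{\delta^*_{K_4},1-\tfrac15\}+\varepsilon)n$, Theorem~\ref{thm:frac->exact} yields an $F$-decomposition of $G$, which is exactly the conclusion of Theorem~\ref{thm:F_decom_ch=4}. There is essentially no obstacle here: the entire content of the statement has been pushed into Theorem~\ref{thm:frac_K_4_decom}, whose proof (the nonlinear-programming argument reducing the number of variables) is the real work of the paper; Theorem~\ref{thm:F_decom_ch=4}, like Theorem~\ref{thm:K_4_decom} (which is the special case $F=K_4$), is a formal corollary. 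If one wants to be scrupulous, the only minor point to verify is that $\chi(F)=4$ is exactly the hypothesis needed to make $K_\chi=K_4$ appear in Theorem~\ref{thm:frac->exact}, and that $\chi\ge3$ there is satisfied since $4\ge3$.
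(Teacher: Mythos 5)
Your argument is exactly the one the paper intends: specialize Theorem~\ref{thm:frac->exact} to $\chi(F)=4$, use Theorem~\ref{thm:frac_K_4_decom} to deduce $\delta^*_{K_4}\leq\frac{31}{33}$, and observe $\frac{4}{5}<\frac{31}{33}$ so the max is at most $\frac{31}{33}$. This is correct and matches the paper, which simply notes that the theorem follows from Theorems~\ref{thm:frac->exact} and~\ref{thm:frac_K_4_decom} without spelling out the routine verification you give.
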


Combined with the work of Condon, Kim, K\"uhn, and Osthus (see Theorem 1.2 in \cite{CKKO19}), Theorem \ref{thm:frac_K_4_decom} gives the following theorem. Here we say that a collection $\mathcal{H} = \{H_1,\dots,H_s \}$ of graphs {\em packs} into $G$ if there exist pairwise edge-disjoint copies
of $H_1,\dots,H_s$ in $G$. Let $\Delta(G)=\max\{d_G(v):v\in V(G)\}$ be the {\em maximum degree} of a graph $G$ and $e(\mathcal H) =\sum_{H\in\mathcal H}e(H)$. A graph $H$ on $n$ vertices is said to be {\em $\eta$-separable} if there exists a set $S\subseteq V(H)$ such that $|S|\leq\eta n$ and the size of every component of $H[V(H)\setminus S]$ is at most $\eta n$. And a graph $H$ on $n$ vertices is {\em $(r,\eta)$-chromatic} if the graph $F'$, obtained from $F$ by deleting isolated vertices, can be properly colored with $r+1$ colors such that the size of some colour class is at most $\eta n$.
\begin{Theorem}\label{thm:separable_packing}
For all $\Delta$, $0<\nu<1$ and $\frac{31}{33}<\delta\leq1$, there exist $\xi,\eta>0$ and $n_0\in\mathbb N$ such that for all $n>n_0$ the following holds. Suppose that $\mathcal H$ is a collection of $n$-vertex $(4,\eta)$-chromatic $\eta$-separable graphs and $G$ is an $n$-vertex graph such that
\begin{enumerate}
\item[$(i)$] $(\delta-\xi)n\leq \delta(G)\leq\Delta(G)\leq(\delta+\xi)n$,
\item[$(ii)$] $\Delta(H)\leq \Delta$ for all $H\in\mathcal H$, and
\item[$(iii)$] $e(\mathcal H)\leq(1-\nu)e(G)$.
\end{enumerate}
Then $\mathcal H$ packs into $G$.
\end{Theorem}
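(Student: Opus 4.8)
**The plan is to deduce Theorem~\ref{thm:separable_packing} as a black-box consequence of Theorem~\ref{thm:frac_K_4_decom} together with Theorem~1.2 of Condon, Kim, K\"uhn, and Osthus \cite{CKKO19}.** That result of \cite{CKKO19} is precisely a statement of this shape: it says that if $\delta > \delta^*_{K_4}$ (more precisely, if $\delta$ exceeds the fractional $K_4$-decomposition threshold, in the relevant $n\to\infty$ sense), then for all $\Delta$ and $\nu$ there are $\xi,\eta>0$ and $n_0$ so that every collection of $n$-vertex $(4,\eta)$-chromatic $\eta$-separable graphs of maximum degree at most $\Delta$ and total edge count at most $(1-\nu)e(G)$ packs into any $n$-vertex $G$ with all degrees in $((\delta-\xi)n,(\delta+\xi)n)$. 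The quantifier structure of Theorem~\ref{thm:separable_packing} matches this verbatim with ``$4$'' in place of the general clique parameter ``$r$''. So the entire content of the proof is the implication ``$\delta^*_{K_4} \le \tfrac{31}{33} \Rightarrow$ the packing conclusion holds for all $\delta > \tfrac{31}{33}$.''

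First I would recall the exact statement of \cite[Theorem~1.2]{CKKO19} in the form it is proved there. In that paper the hypothesis on $G$ is stated in terms of a parameter that is at least the fractional $K_r$-decomposition threshold; concretely, one needs $\delta(G)/n$ to be bounded away from $\delta^*_{K_r}$ from above, uniformly as $n\to\infty$, which is exactly what $\delta^*_{K_r} = \limsup_{n\to\infty}\delta^*_{K_r}(n)$ controls. Then I would fix $r=4$ and invoke Theorem~\ref{thm:frac_K_4_decom}: it gives $\delta^*_{K_4}(n) \le \tfrac{31}{33}$ for \emph{every} $n$ (the theorem has no largeness hypothesis on $n$), hence $\delta^*_{K_4} \le \tfrac{31}{33}$. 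Consequently, for any $\delta$ with $\tfrac{31}{33} < \delta \le 1$ there is a gap between $\delta$ and the threshold, which is precisely the input \cite[Theorem~1.2]{CKKO19} requires. Feeding the given $\Delta$ and $\nu$ into that theorem produces the promised $\xi,\eta>0$ and $n_0$, and shrinking $\xi$ if necessary so that $(\delta-\xi)n > \tfrac{31}{33}n$ still holds keeps us inside the admissible range; the conclusion that $\mathcal{H}$ packs into $G$ is then immediate from \cite{CKKO19}.

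**The main (indeed only) obstacle is bookkeeping, not mathematics:** one must check that the three hypotheses $(i)$--$(iii)$ of Theorem~\ref{thm:separable_packing} and the notions ``$(4,\eta)$-chromatic'' and ``$\eta$-separable'' are stated in \cite{CKKO19} in a form compatible with ours (up to renaming $r\to r$, $\chi\to\chi$, and the trivial identification of the clique $K_4$ case), and that the quantifier order $(\forall\,\Delta,\nu,\delta)\,(\exists\,\xi,\eta,n_0)\,(\forall n > n_0)\,(\forall\,\mathcal H, G)$ is exactly what \cite{CKKO19} delivers. There is a minor subtlety in that \cite{CKKO19} may phrase its degree hypothesis using $\delta^*_{K_4}+\varepsilon$ for a fixed $\varepsilon$ rather than a one-sided $\delta > \delta^*_{K_4}$; this is handled by setting $\varepsilon := \tfrac12(\delta - \tfrac{31}{33})$, which is legitimate because $\delta$ is fixed before $\xi,\eta,n_0$ are chosen. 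Once these identifications are made, no computation is needed — Theorem~\ref{thm:separable_packing} is simply the specialization of \cite[Theorem~1.2]{CKKO19} to $r=4$, with the hypothesis $\delta^*_{K_4}\le\tfrac{31}{33}$ supplied by Theorem~\ref{thm:frac_K_4_decom}.
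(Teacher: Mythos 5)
Your proposal matches the paper's own (implicit) proof: the paper simply states Theorem~\ref{thm:separable_packing} as the direct combination of Theorem~\ref{thm:frac_K_4_decom} (which yields $\delta^*_{K_4}\le\frac{31}{33}$) with Theorem~1.2 of \cite{CKKO19}, exactly as you outline. Your bookkeeping observations about quantifier order and the choice of $\varepsilon$ are sound and add nothing beyond what is intended; this is the same argument.
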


\section{Fractional $K_4$-decompositions}
We open this section with the definition of an edge-gadget introduced by Barber, K\"uhn, Lo, Montgomery, and Osthus \cite{BKLMO17}.
Let $G$ be a graph. Let $\mathcal K_l(G)$ be the set of copies of $K_l$ in $G$ for $l\geq 2$ and $\mathcal K_l(G,S)=\{K\in\mathcal K_l(G):S\subseteq V(K)\}$ for $S\subseteq V(G)$.
%Now we present the definition of an edge-gadget introduced by Barber, K\"uhn, Lo, Montgomery, and Osthus \cite{BKLMO17}, as follows.
The {\em edge-gadget} of $e$ in $K\in\mathcal K_6(G)$ is a function $\psi_{K,e}:\mathcal K_4(G)\rightarrow\mathbb{R}$ with
\[ \psi_{K,e}(T) = \left\{
\begin{array}{cl}
\frac{1}{2}, & \text{if } T\subseteq K \text{ and }e\cap V(T)=\emptyset,\\
-\frac{1}{6}, & \text{if } T\subseteq K \text{ and } |e\cap V(T)|=1,\\
\frac{1}{6}, & \text{if } T\subseteq K \text{ and } e\in E(T),\\
0, & \text{otherwise}.
\end{array} \right. \]
The following lemma is a basic and useful proposition of the edge-gadget.
\begin{Lemma}\label{lem:edge_gadget}
For any fixed $e\in E(G)$ and $K\in\mathcal K_6(G)$, if $f\in E(G)$, then
\[ s(f)=\sum_{T\in\mathcal K_4(G,f)}\psi_{K,e}(T) = \left\{
\begin{array}{cl}
1, & \text{if } f=e, \text{ and }\\
0, & \text{otherwise}.
\end{array} \right. \]
\end{Lemma}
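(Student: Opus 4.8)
The plan is to exploit that $\psi_{K,e}$ is supported only on those copies of $K_4$ that sit inside $K$, which reduces the identity to a purely finite count inside $K\cong K_6$; throughout I use that $e\in E(K)$, which is the situation in which the edge-gadget of $e$ in $K$ is defined.

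First I would record the reduction. Since $\psi_{K,e}(T)=0$ for every $T\not\subseteq K$, the sum defining $s(f)$ ranges only over $T\in\mathcal K_4(G)$ with $V(f)\subseteq V(T)\subseteq V(K)$. If $f$ is not an edge of $K$ then this index set is empty, so $s(f)=0$; and then $f\neq e$ since $e\in E(K)$, so it suffices to treat $f\in E(K)$. Because $K$ is complete, each $4$-subset of $V(K)$ spans a copy of $K_4$ in $G$ and these are exactly the $T$ with $T\subseteq K$; so, identifying such a $T$ with its vertex set, $s(f)=\sum_{T}\psi_{K,e}(T)$ where $T$ runs over the $4$-subsets of $V(K)$ containing $V(f)$. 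I would also note that the case ``$|V(e)\cap V(T)|=2$ while $e\notin E(T)$'' cannot occur for $T$ complete, so the three nonzero branches of $\psi_{K,e}$ depend only on $|V(e)\cap V(T)|\in\{0,1,2\}$, with weights $\tfrac12,-\tfrac16,\tfrac16$ respectively.

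Next I would do the count. Write $V(K)=\{u,v,a,b,c,d\}$ with $e=uv$, and split on $|V(f)\cap\{u,v\}|\in\{2,1,0\}$; in each case I enumerate the $4$-subsets $T$ of $V(K)$ with $V(f)\subseteq T$ according to $|T\cap\{u,v\}|$ and sum the weights. If $f=e=uv$: every such $T$ contains $\{u,v\}$ and arises by adjoining two of $\{a,b,c,d\}$, giving $\binom42=6$ sets, so $s(e)=6\cdot\tfrac16=1$. If $|V(f)\cap\{u,v\}|=1$, say $f=ua$: such $T$ arise by adjoining two of $\{v,b,c,d\}$, giving $\binom42=6$ sets, of which $3$ contain $v$ (weight $\tfrac16$) and $3$ do not (weight $-\tfrac16$), so $s(f)=0$. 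If $V(f)\cap\{u,v\}=\emptyset$, say $f=ab$: such $T$ arise by adjoining two of $\{u,v,c,d\}$, and exactly $1$ contains both of $u,v$ (weight $\tfrac16$), $4$ contain exactly one (weight $-\tfrac16$), and $1$ contains neither (weight $\tfrac12$), so $s(f)=\tfrac16-\tfrac46+\tfrac36=0$. These cases are exhaustive.

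There is no real obstacle: the lemma is a bookkeeping identity, and the only points needing care are (i) that $s(f)=0$ for $f\notin E(K)$ is forced by the support of $\psi_{K,e}$ (and that $e\in E(K)$ is assumed); (ii) that completeness of $T$ makes the branch condition a trichotomy on $|V(e)\cap V(T)|$; and (iii) checking in each case that the sub-counts add to $\binom42=6$, the number of copies of $K_4$ through a fixed edge of $K_6$, which is a convenient consistency check.
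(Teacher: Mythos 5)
Your proof is correct and follows essentially the same route as the paper's: restrict the sum to the $\binom{4}{2}=6$ copies of $K_4$ inside $K$ through $f$, split on $|V(f)\cap V(e)|\in\{2,1,0\}$, and count weights. You are slightly more careful than the paper in two small ways, both worth keeping: you explicitly flag that the identity (and the gadget itself) presupposes $e\in E(K)$, which is implicit in the paper's usage where $e=\{v_1,v_2\}\subseteq V(K)$; and you note that for complete $T$ the branch ``$|V(e)\cap V(T)|=2$ but $e\notin E(T)$'' is vacuous, so the three nonzero weights form a genuine trichotomy.
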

\begin{proof} %Let $s(f)=\sum_{T\in\mathcal K_4(G,f)}\psi_{K,e}(T)$.
If $f\not\in E(K)$, then $\psi_{K,e}(T)=0$ for any $T\in\mathcal K_4(G,f)$, and so $s(f)=0$.
Suppose that $f\in E(K)$. If $|f\cap e|=1$, then $s(f)=3\cdot\frac{1}{6}+\binom{3}{2}\cdot(-\frac{1}{6})+[\binom{n-2}{2}-3-\binom{3}{2}]\cdot0=0$. If $f\cap e=\emptyset$, then $s(f)=\frac{1}{2}+2\cdot2\cdot(-\frac{1}{6})+\frac{1}{6}+[\binom{n-2}{2}-1-2\cdot2-1]\cdot0=0$. If $f=e$, then $s(f)=\binom{4}{2}\cdot\frac{1}{6}+[\binom{n-2}{2}-\binom{4}{2}]\cdot0=1$.
\end{proof}

Let $G$ be a graph. An {\em ordered $l$-clique} of $G$ is an $l$-tuple $(v_1,v_2,\dots,v_l)$ such that $\{v_1,\dots, v_l\}$ is a vertex set of $K_l$ in $G$ and let $\mathcal{OK}_l(G)$ denote the set of ordered $l$-cliques in $G$. If $K = (v_1 , \dots, v_l )\in \mathcal{OK}_l(G)$, then let $V(K) = \{v_1 , \dots, v_l\}$.
Define $N(v)=\{u\in V(G):\{u,v\}\in E(G)\}$ and $N(S)=\bigcap_{v\in S}N(v)$ for any $S\subseteq V(G)$. For convenience, we shall write $N(v_1,\dots,v_r)$ instead of $N(\{v_1,\dots,v_r\})$ for $\{v_1,\dots,v_r\}\subseteq V(G)$ sometime.
%For convenience, throughout this paper, we shall write $N(v_1,\dots,v_r)$ instead of $N(\{v_1,\dots,v_r\})$ for $\{v_1,\dots,v_r\}\subseteq V(G)$.
For any $r\in\{2,3,4,5\}$ and any $K=(v_1,\dots,v_r)\in\mathcal{OK}_r(G)$, let
\begin{align}\label{def:W(K)}
W(v_1,\dots,v_r)=W(K)=\prod_{i=2}^{r}\frac{1}{|N(v_1,\dots,v_i)|}.
\end{align}
%Moreover, let $W(v_1,\dots,v_r)=W(K)$.
Meanwhile, we define a function $W_G:\mathcal K_4(G)\rightarrow\mathbb{R}$ as $$W_G(T)=\frac{1}{2}\cdot\sum_{(v_1,\dots,v_6)\in\mathcal{OK}_6(G)}W(v_1,\dots,v_5)\cdot\psi_{K',e}(T),$$
where $K'=G[\{v_1,\dots,v_6\}]$ is an induced subgraph in $G$ and $e=\{v_1,v_2\}$.

We will prove that $W_G(T)$ is a fractional $K_4$-decomposition. The following theorem implies the weight of each edge is $1$ for our weight $W_G(T)$ of $K_4$s.
\begin{Theorem}\label{thm:frac_dec_equ=1}
Let $G$ be a graph with $n$ vertices and minimum degree $\delta(G)>\frac{4}{5}n$. If $e\in E(G)$, then $$\sum_{T\in\mathcal K_4(G,e)}W_G(T)=1.$$
\end{Theorem}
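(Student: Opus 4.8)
The plan is to compute $\sum_{T\in\mathcal K_4(G,e)}W_G(T)$ by expanding the definition of $W_G$ and interchanging the order of summation, so that the edge-gadget lemma (Lemma 2.2) does the work. Writing out the definition, we have
\[
\sum_{T\in\mathcal K_4(G,e)}W_G(T)=\frac12\sum_{T\in\mathcal K_4(G,e)}\ \sum_{(v_1,\dots,v_6)\in\mathcal{OK}_6(G)}W(v_1,\dots,v_5)\,\psi_{G[\{v_1,\dots,v_6\}],\{v_1,v_2\}}(T).
\]
The first step is to swap the two sums: fix an ordered $6$-clique $(v_1,\dots,v_6)$, set $K'=G[\{v_1,\dots,v_6\}]$ and $f=\{v_1,v_2\}$, and observe that $\sum_{T\in\mathcal K_4(G,e)}\psi_{K',f}(T)$ is exactly the quantity $s(e)$ from Lemma 2.2 applied with the edge-gadget $\psi_{K',f}$. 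By that lemma this inner sum equals $1$ if $e=f=\{v_1,v_2\}$ and $0$ otherwise. Hence the whole expression collapses to
\[
\frac12\sum_{\substack{(v_1,\dots,v_6)\in\mathcal{OK}_6(G)\\ \{v_1,v_2\}=e}}W(v_1,\dots,v_5).
\]

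The second step is to evaluate this remaining sum. Write $e=\{x,y\}$; the ordered $6$-cliques with $\{v_1,v_2\}=e$ come in two flavours according to whether $(v_1,v_2)=(x,y)$ or $(y,x)$, and in each case $v_3,v_4,v_5$ range over ordered triples forming a clique inside $N(v_1,v_2)$ together with $v_1,v_2$, while $v_6$ ranges freely over $N(v_1,\dots,v_5)$. Crucially, $W(v_1,\dots,v_5)$ does not depend on $v_6$, and the number of choices of $v_6$ is precisely $|N(v_1,\dots,v_5)|$, which is the last factor in $W(v_1,\dots,v_6)$ — but we only have $W$ up to index $5$, so summing over $v_6$ just multiplies by $|N(v_1,\dots,v_5)|$. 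The idea is then to peel off the sums over $v_5$, then $v_4$, then $v_3$ one at a time: summing $W(v_1,\dots,v_5)=W(v_1,\dots,v_4)\cdot\frac{1}{|N(v_1,\dots,v_5)|}$ over all valid $v_5\in N(v_1,\dots,v_4)$ gives $W(v_1,\dots,v_4)$ times $\frac{|N(v_1,\dots,v_5)|}{|N(v_1,\dots,v_5)|}$ summed appropriately — more carefully, each $v_5\in N(v_1,v_2,v_3,v_4)$ contributes $\frac{1}{|N(v_1,\dots,v_4)|}$ after the $v_6$-sum is absorbed, wait — one must track this telescoping carefully, but the net effect is that summing over $v_5$ removes the factor $\tfrac{1}{|N(v_1,\dots,v_5)|}$ and replaces the count by a factor of $1$, i.e. $\sum_{v_5\in N(v_1,\dots,v_4)}W(v_1,\dots,v_5)\cdot|N(v_1,\dots,v_5)|/|N(v_1,\dots,v_5)|$; iterating, each successive sum over $v_4$, $v_3$ collapses a factor to $1$. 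After all three sums the product $W$ telescopes completely down to $1$, leaving $\tfrac12$ times the number of orderings of $e$, namely $\tfrac12\cdot 2=1$.

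The role of the hypothesis $\delta(G)>\tfrac45 n$ is to guarantee that all the relevant intersections of neighbourhoods are nonempty, so that no denominator $|N(v_1,\dots,v_i)|$ vanishes and every ordered $5$-clique through $e$ actually extends to an ordered $6$-clique: if $v_1,\dots,v_5$ form a $K_5$ then $|N(v_1,\dots,v_5)|\geq n-5(n-\delta(G))>n-5\cdot\tfrac n5=0$, and similarly at each earlier stage the common neighbourhood is large enough to keep the clique-extension process alive and the weights $W$ well-defined. I expect the main obstacle to be bookkeeping in the telescoping step: one has to be scrupulous about which vertices are being summed over, about the constraint that each new vertex lies in the common neighbourhood of the previous ones (so that $(v_1,\dots,v_k)$ remains an ordered clique), and about the fact that the $v_6$-summation supplies exactly the missing $|N(v_1,\dots,v_5)|$ factor; once the indexing is set up correctly the identity $\sum_{v_{k}\in N(v_1,\dots,v_{k-1})} \frac{1}{|N(v_1,\dots,v_k)|}\cdot(\text{stuff independent of the index})$ telescopes cleanly.
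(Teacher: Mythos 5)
Your proposal is correct and follows essentially the same route as the paper: expand the definition of $W_G$, interchange the two sums, invoke Lemma \ref{lem:edge_gadget} to collapse the inner sum to the indicator of $e=\{v_1,v_2\}$, and then telescope the sum $\sum W(v_1,\dots,v_5)$ by peeling off $v_6,v_5,v_4,v_3$ one at a time (each inner sum contributing the factor $|N(v_1,\dots,v_i)|$ that cancels the matching reciprocal in $W$), arriving at $\tfrac12\cdot 2=1$. The exposition of the telescoping step wanders a bit mid-paragraph, but the intended computation is the same as the paper's and is correct.
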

\begin{proof}
Let $S_i=\{v_1,\dots, v_i\}$ for $i\in\{2,3,4,5\}$. Since $\delta(G)>\frac{4}{5}n$,
\begin{align*}
&|N(S_2)|=|N(v_1)|+|N(v_2)|-|N(v_1)\cup N(v_2)|>2\cdot\frac{4}{5}n-n=\frac{3}{5}n, \notag \\
&|N(S_3)|=|N(S_2)|+|N(v_3)|-|N(S_2)\cup N(v_3)|>\frac{3}{5}n+\frac{4}{5}n-n=\frac{2}{5}n, \notag \\
&|N(S_4)|=|N(S_3)|+|N(v_4)|-|N(S_3)\cup N(v_4)|>\frac{2}{5}n+\frac{4}{5}n-n=\frac{1}{5}n, \text{ and }\notag \\
&|N(S_5)|=|N(S_4)|+|N(v_5)|-|N(S_4)\cup N(v_5)|>\frac{1}{5}n+\frac{4}{5}n-n=0. \notag
\end{align*}
Recall that $W(v_1,v_2,v_3,v_4,v_5)=\frac{1}{|N(S_2)||N(S_3)||N(S_4)||N(S_5)|}$. So $W(v_1,v_2,v_3,v_4,v_5)$ is well-defined and $W(v_1,v_2,v_3,v_4,v_5)>0$. For every $e\in E(G)$, let $W_e=\sum_{T\in\mathcal K_4(G,e)}W_G(T)$. By the definition of $W_G(T)$,
\begin{align*}
W_e&=\sum_{T\in\mathcal K_4(G,e)}W_G(T)=\sum_{T\in\mathcal K_4(G,e)}\frac{1}{2}\cdot\sum_{K=(v_1,\dots,v_6)\in\mathcal{OK}_6(G)}W(v_1,\dots,v_5)\cdot\psi_{G[V(K)],\{v_1,v_2\}}(T)\\
&=\frac{1}{2}\cdot\sum_{K=(v_1,\dots,v_6)\in\mathcal{OK}_6(G)}W(v_1,\dots,v_5)\cdot\sum_{T\in\mathcal K_4(G,e)}\psi_{G[V(K)],\{v_1,v_2\}}(T)
\end{align*}
By Lemma \ref{lem:edge_gadget}, for any fixed $K=(v_1,\dots,v_6)\in\mathcal{OK}_6(G)$,
$$\sum_{T\in\mathcal K_4(G,e)}\psi_{G[V(K)],\{v_1,v_2\}}(T) = \left\{
\begin{array}{cl}
1, & \text{if } e=\{v_1,v_2\}, \text{ and }\\
0, & \text{otherwise}.
\end{array} \right.$$
Then
\begin{align*}
W_e&=\frac{1}{2}\cdot\sum_{\substack{K=(v_1,\dots,v_6)\in\mathcal{OK}_6(G) \\ e=\{v_1,v_2\}}}W(v_1,\dots,v_5)\\
&=\frac{1}{2}\cdot\sum_{\substack{K=(v_1,\dots,v_6)\in\mathcal{OK}_6(G) \\ e=\{v_1,v_2\}}}\prod_{i=2}^{5}\frac{1}{|N(S_i)|}.
\end{align*}
Note that for each $i\in\{2,3,4,5\}$, $|\{(v_1,\dots,v_i,s)\in\mathcal{OK}_{i+1}(G):s\in V(G)\}|=|N(v_1,\dots,v_i)|=|N(S_i)|$. Therefore,
\begin{align*}
W_e&=\frac{1}{2}\cdot\sum_{\substack{K=(v_1,\dots,v_6)\in\mathcal{OK}_6(G) \\ e=\{v_1,v_2\}}}\frac{1}{|N(S_2)||N(S_3)||N(S_4)||N(S_5)|}\\
&=\frac{1}{2}\cdot\sum_{\substack{K=(v_1,\dots,v_5)\in\mathcal{OK}_5(G) \\ e=\{v_1,v_2\}}}\frac{1}{|N(S_2)||N(S_3)||N(S_4)|}\\
&=\frac{1}{2}\cdot\sum_{\substack{K=(v_1,\dots,v_4)\in\mathcal{OK}_4(G) \\ e=\{v_1,v_2\}}}\frac{1}{|N(S_2)||N(S_3)|}\\
&=\frac{1}{2}\cdot\sum_{\substack{K=(v_1,\dots,v_3)\in\mathcal{OK}_3(G) \\ e=\{v_1,v_2\}}}\frac{1}{|N(S_2)|}\\
&=\frac{1}{2}\cdot\sum_{\substack{K=(v_1,v_2)\in\mathcal{OK}_2(G) \\ e=\{v_1,v_2\}}}1\\
&=\frac{1}{2}\cdot 2=1.
\end{align*}
\end{proof}

Theorem \ref{thm:frac_dec_equ=1} implies that if $W_G(T)\geq0$ for any $T\in\mathcal{K}_4(G)$, then $W_G(T)$ is the desired fractional $K_4$-decomposition.

For a subgraph $H$ of $G$,  $\mathcal {OK}_r(G,H)$ denotes the set of elements $K\in \mathcal {OK}_r(G)$ such that $V(H)\subseteq V(K)$.
Let $s\geq r \geq 1$, $H_1= (u_1 ,\dots, u_s )\in\mathcal{OK}_s(G)$ and $H_2= (v_1 ,\dots, v_r)\in\mathcal{OK}_r(G)$. We say $H_1$ is an {\em ordered subgraph} of $H_2$ if $u_1\dots u_r$ is a (not necessarily consecutive) subsequence of $v_1\dots v_s$.
For an ordered $ H\in\mathcal{OK}_r(G)$, $\mathcal{OK}_s(G, H)$ denote the set of elements in $\mathcal{OK}_s(G)$ containing $H$ as an ordered subgraph. We define a function $W_G:\mathcal {OK}_4(G)\rightarrow\mathbb{R}$ as
$$W_G(O)=\frac{1}{2}\sum_{K=(v_1,\dots,v_6)\in\mathcal {OK}_6(G,O)}W(v_1,\dots,v_5)\psi_{G[V(K)],\{v_1,v_2\}}(G[V(O)]).$$

\begin{Lemma}\label{lem:sum_W_G(O)}
For any $T\in\mathcal K_4(G)$, $\sum_{O\in\mathcal {OK}_4(G,T)}W_G(O)=W_G(T)$. Moreover, if $W_G(O)\geq0$ for any $O\in\mathcal{OK}_4(G)$, then $W_G(T)\geq0$ for any $T\in\mathcal{K}_4(G)$.
\end{Lemma}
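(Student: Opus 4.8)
The statement is a bookkeeping identity, so the plan is simply to unwind the two definitions of $W_G$ and match terms. The first point to record is that $\mathcal{OK}_4(G,T)$ is exactly the set of the $24$ orderings $(w_1,w_2,w_3,w_4)$ of the vertex set $V(T)$: indeed any $K\in\mathcal{OK}_4(G)$ with $V(T)\subseteq V(K)$ has $|V(K)|=4=|V(T)|$, so $V(K)=V(T)$. The second point is that $\psi_{K,e}(\cdot)$ as defined depends on its argument only through the argument's vertex set (``$T\subseteq K$'', ``$e\cap V(T)$'' and ``$e\in E(T)$'' all refer to vertex sets, since $T$ is complete), so for $O\in\mathcal{OK}_4(G,T)$ one has $\psi_{G[V(K)],\{v_1,v_2\}}(G[V(O)])=\psi_{G[V(K)],\{v_1,v_2\}}(T)$.

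With these in hand, I would substitute the definition of $W_G(O)$ to get
$$\sum_{O\in\mathcal{OK}_4(G,T)}W_G(O)=\frac{1}{2}\sum_{O\in\mathcal{OK}_4(G,T)}\ \sum_{K=(v_1,\dots,v_6)\in\mathcal{OK}_6(G,O)}W(v_1,\dots,v_5)\,\psi_{G[V(K)],\{v_1,v_2\}}(T),$$
and then interchange the two sums. The key counting step is this: for a fixed $K=(v_1,\dots,v_6)\in\mathcal{OK}_6(G)$ with $V(T)\subseteq V(K)$, the four vertices of $T$ occur in the sequence $v_1\dots v_6$ in one and only one relative order, so there is exactly one $O\in\mathcal{OK}_4(G,T)$ with $K\in\mathcal{OK}_6(G,O)$; and if $V(T)\not\subseteq V(K)$ there is none. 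Hence the double sum equals
$$\frac{1}{2}\sum_{\substack{K=(v_1,\dots,v_6)\in\mathcal{OK}_6(G)\\ V(T)\subseteq V(K)}}W(v_1,\dots,v_5)\,\psi_{G[V(K)],\{v_1,v_2\}}(T).$$
Finally, since $\psi_{G[V(K)],\{v_1,v_2\}}(T)=0$ whenever $V(T)\not\subseteq V(K)$, adjoining those zero terms lets me drop the constraint under the sum, which is precisely the definition of $W_G(T)$ on $\mathcal K_4(G)$; this gives $\sum_{O\in\mathcal{OK}_4(G,T)}W_G(O)=W_G(T)$.

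The ``moreover'' part is then immediate: if $W_G(O)\geq0$ for every $O\in\mathcal{OK}_4(G)$, the identity just proved exhibits $W_G(T)$ as a sum of nonnegative numbers, so $W_G(T)\geq0$. There is no real obstacle in this lemma; the only spots meriting a line of justification are the ``exactly one relative order'' count used in the interchange of summations and the (harmless) extension of the outer sum over the zero terms $V(T)\not\subseteq V(K)$. The purpose of the lemma is strategic: combined with Theorem~\ref{thm:frac_dec_equ=1}, it reduces proving nonnegativity of the fractional $K_4$-decomposition weights from the unordered cliques $T$ to the ordered cliques $O$, which is the form in which the remaining analysis will be carried out.
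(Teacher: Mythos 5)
Your proof is correct and follows essentially the same route as the paper: substitute the definition, interchange the two sums, observe that each $K\in\mathcal{OK}_6(G)$ with $V(T)\subseteq V(K)$ arises from exactly one ordered subclique $O\in\mathcal{OK}_4(G,T)$, and then add in the vanishing terms with $V(T)\not\subseteq V(K)$ to recover the definition of $W_G(T)$. If anything you make the key counting step ("exactly one relative order") more explicit than the paper does.
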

\begin{proof}
For $T\in\mathcal K_4(G)$, by the definition of $W_G(O)$,
\begin{align*}
\sum_{O\in\mathcal {OK}_4(G,T)}W_G(O)&=\frac{1}{2}\sum_{O\in\mathcal {OK}_4(G,T)}\sum_{K=(v_1,\dots,v_6)\in\mathcal {OK}_6(G,O)}W(v_1,\dots,v_5)\psi_{G[V(K)],\{v_1,v_2\}}(G[V(O)])\\
&=\frac{1}{2}\sum_{K=(v_1,\dots,v_6)\in\mathcal {OK}_6(G,O)}\sum_{O\in\mathcal {OK}_4(G,T)}W(v_1,\dots,v_5)\psi_{G[V(K)],\{v_1,v_2\}}(T)\\
&=\frac{1}{2}\sum_{\substack{K=(v_1,\dots,v_6)\in\mathcal {OK}_6(G)\\ V(T)\subseteq V(K)}}W(v_1,\dots,v_5)\psi_{G[V(K)],\{v_1,v_2\}}(T)\\
&=\frac{1}{2}\sum_{\substack{K=(v_1,\dots,v_6)\in\mathcal {OK}_6(G)}}W(v_1,\dots,v_5)\psi_{G[V(K)],\{v_1,v_2\}}(T)\\
&=W_G(T),
\end{align*}
where the penultimate equality holds since $\psi_{G[V(K)],\{v_1,v_2\}}(T)=0$ for any $K\in\mathcal {OK}_6(G)$ with $V(T)\not\subseteq V(K)$.
Thus if $W_G(O)\geq0$ for any $O\in\mathcal{OK}_4(G)$, then $W_G(T)\geq0$ for any $T\in\mathcal{K}_4(G)$.
\end{proof}
Hence to prove $W_G(T)\geq0$, it suffices to prove $W_G(O)\geq0$ by the above lemma. For each $O=(x_1,x_2,x_3,x_4)\in\mathcal {OK}_4(G)$, let
%We define a function $W'_G:\mathcal {OK}_4(G)\rightarrow\mathbb{R}$ as
\begin{align*}
W'_G(O)=&1-\frac{12}{W(x_1,x_2,x_3)}W_G(O).
%\\
%=&\frac{1}{W(x_1,x_2,x_3)}\sum_{y\in R}[2W(x_1,y,x_2,x_3)-W(x_1,x_2,x_3,y)-W(x_1,x_2,y,x_3)+\\
%&\sum_{z\in N(y)\cap R}(2W(x_1,y,x_2,x_3,z)+2W(x_1,y,x_2,z,x_3)+2W(x_1,y,z,x_2,x_3)-\\
%&W(x_1,x_2,x_3,y,z)-W(x_1,x_2,y,x_3,z)-W(x_1,x_2,y,z,x_3)-3W(y,z,x_1,x_2,x_3))].
\end{align*}
The following lemma implies that in order to prove $W_G(O)\geq 0$, it suffices now to prove $W'_G(O)\leq 1$ and gives a new expression of $W'_G(O)$.
\begin{Lemma}\label{lem:W'_G(O)->W_G(O)}
Let $G$ be a graph with $n$ vertices and minimum degree $\delta(G)>\frac{4}{5}n$. Let $O=(x_1,x_2,x_3,x_4)\in\mathcal {OK}_4(G)$ and $R=N(x_1,x_2,x_3,x_4)$. Then the following hold.
\begin{enumerate}
\item[$(1)$] If $W'_G(O)\leq 1$, then $W_G(O)\geq 0$.
\item[$(2)$] We know that
\begin{align*}
W'_G(O)&=\frac{1}{W(x_1,x_2,x_3)}\sum_{y\in R}[2W(x_1,y,x_2,x_3)-W(x_1,x_2,x_3,y)-W(x_1,x_2,y,x_3)+\\
&\sum_{z\in N(y)\cap R}(2W(x_1,y,x_2,x_3,z)+2W(x_1,y,x_2,z,x_3)+2W(x_1,y,z,x_2,x_3)-\\
&W(x_1,x_2,x_3,y,z)-W(x_1,x_2,y,x_3,z)-W(x_1,x_2,y,z,x_3)-3W(y,z,x_1,x_2,x_3))].
\end{align*}
\end{enumerate}
%Let $O=(x_1,x_2,x_3,x_4)\in\mathcal {OK}_4(G)$ and $R=N(x_1,x_2,x_3,x_4)$. Then
%\begin{align*}
%W_G(O)&=\frac{1}{12}W(x_1,x_2,x_3)+\frac{1}{12}\sum_{y\in R}[W(x_1,x_2,x_3,y)+W(x_1,x_2,y,x_3)-2W(x_1,y,x_2,x_3)+\\
%&\sum_{z\in N(y)\cap R}(W(x_1,x_2,x_3,y,z)+W(x_1,x_2,y,x_3,z)+W(x_1,x_2,y,z,x_3)-2W(x_1,y,x_2,x_3,z)\\
%&-2W(x_1,y,x_2,z,x_3)-2W(x_1,y,z,x_2,x_3)+3W(y,z,x_1,x_2,x_3))].
%\end{align*}
\end{Lemma}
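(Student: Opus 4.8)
The plan for part $(1)$ is short. Since $O=(x_1,x_2,x_3,x_4)$ spans a $K_4$ of $G$, both $x_3,x_4$ lie in $N(x_1,x_2)$ and $x_4\in N(x_1,x_2,x_3)$, so $W(x_1,x_2,x_3)=\tfrac{1}{|N(x_1,x_2)|\,|N(x_1,x_2,x_3)|}>0$; moreover the hypothesis $\delta(G)>\tfrac45 n$ guarantees, by the very computation in the proof of Theorem \ref{thm:frac_dec_equ=1}, that every intersection of at most five neighbourhoods occurring below is nonempty, so all weights $W(\cdot)$ we use are well defined. Hence $W'_G(O)\leq 1$ rearranges to $\tfrac{12}{W(x_1,x_2,x_3)}W_G(O)\geq 0$, that is, $W_G(O)\geq 0$.

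For part $(2)$ I would start from $W_G(O)=\tfrac12\sum_{K=(v_1,\dots,v_6)\in\mathcal{OK}_6(G,O)}W(v_1,\dots,v_5)\,\psi_{G[V(K)],\{v_1,v_2\}}(G[V(O)])$ and use two observations. First, $G[V(O)]$ is a complete $K_4$ and $V(O)\subseteq V(K)$ always holds, so $\psi_{G[V(K)],\{v_1,v_2\}}(G[V(O)])$ equals $\tfrac12$, $-\tfrac16$, or $\tfrac16$ according as $\{v_1,v_2\}$ meets $\{x_1,x_2,x_3,x_4\}$ in $0$, $1$, or $2$ vertices, and depends on nothing else. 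Second, $W(v_1,\dots,v_5)$ does not depend on $v_6$. I would therefore split $\mathcal{OK}_6(G,O)$ according to where $x_4$ sits: (a) $v_6=x_4$, so $(v_1,\dots,v_5)$ is one of the $\binom52=10$ shuffles of $x_1,x_2,x_3$ with an ordered pair $(a,b)$ of adjacent vertices of $R=N(x_1,x_2,x_3,x_4)$, placed so that $a$ precedes $b$; or (b) $x_4\in\{v_1,\dots,v_5\}$, so $(v_1,\dots,v_5)$ is one of the $\binom51=5$ shuffles of $x_1,x_2,x_3,x_4$ with a single $a\in R$, and $v_6$ then ranges over $N(v_1,\dots,v_5)=R\cap N(a)$.

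In family (b) the sum over $v_6$ collapses: from $W(v_1,\dots,v_5)=W(v_1,v_2,v_3,v_4)/|N(v_1,\dots,v_5)|$ we get $\sum_{v_6\in N(v_1,\dots,v_5)}W(v_1,\dots,v_5)=W(v_1,v_2,v_3,v_4)$, so each $5$-tuple weight turns into a $4$-tuple weight. Running over the five positions of $a$ — with $\psi=-\tfrac16$ in the two cases where $a$ occupies coordinate $1$ or $2$ and $\psi=\tfrac16$ otherwise, and $4$-tuple weights $W(a,x_1,x_2,x_3)$, $W(x_1,a,x_2,x_3)$, $W(x_1,x_2,a,x_3)$, $W(x_1,x_2,x_3,a)$, $W(x_1,x_2,x_3,x_4)$ respectively — and using the identity $W(u_1,u_2,u_3,\dots)=W(u_2,u_1,u_3,\dots)$ (transposing the first two coordinates changes no set $N(v_1,\dots,v_i)$), one gets precisely the single sum $\sum_{y\in R}[2W(x_1,y,x_2,x_3)-W(x_1,x_2,x_3,y)-W(x_1,x_2,y,x_3)]$ of the claimed formula together with one leftover contribution $-|R|\,W(x_1,x_2,x_3,x_4)$ from the shuffle $(x_1,x_2,x_3,x_4,a)$. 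In family (a) there is no collapse, and running through the ten shuffles — recording $\psi$ and $W(v_1,\dots,v_5)$ in each, and again merging pairs of shuffles such as $(a,x_1,\dots)$ and $(x_1,a,\dots)$ by the transposition identity — reproduces exactly the seven terms of the double sum $\sum_{y\in R}\sum_{z\in N(y)\cap R}(\cdots)$, the sum over ordered pairs $(y,z)=(a,b)$ with $z\in N(y)\cap R$ accounting for both orders of the extra vertices. Finally $W(x_1,x_2,x_3)=|N(x_1,x_2,x_3,x_4)|\,W(x_1,x_2,x_3,x_4)=|R|\,W(x_1,x_2,x_3,x_4)$, so $\tfrac{12}{W(x_1,x_2,x_3)}\cdot\tfrac12\cdot\tfrac16\cdot|R|\,W(x_1,x_2,x_3,x_4)=1$: the leftover contributes exactly $-1$ to $W'_G(O)=1-\tfrac{12}{W(x_1,x_2,x_3)}W_G(O)$ and cancels the constant $1$. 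Dividing the assembled sum by $W(x_1,x_2,x_3)$ yields the stated expression.

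The only real difficulty is bookkeeping; there is no analytic content, $(2)$ being a finite signed expansion over the $5+10=15$ shuffle types. The points that need care are: keeping the order of the two extra vertices straight so that summing over ordered pairs $(y,z)$ with $z\in N(y)\cap R$ produces the double sum with the right coefficients $2$, $-1$, $-3$; invoking $W(u_1,u_2,\dots)=W(u_2,u_1,\dots)$ at the correct places to merge shuffles into a coefficient $2$; and spotting that the leftover $|R|\,W(x_1,x_2,x_3,x_4)/W(x_1,x_2,x_3)$ is exactly $1$, which is precisely what removes the constant from the final formula.
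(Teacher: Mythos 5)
Your proposal is correct and follows essentially the same route as the paper: expand $W_G(O)$ over the $\binom{6}{2}=15$ shuffle positions of the two extra vertices among $\{v_1,\dots,v_6\}$, collapse the sum over the sixth vertex in the shuffles where only one extra vertex occupies positions $1$ through $5$, merge the shuffles differing by a swap of $v_1,v_2$ via $W(u_1,u_2,\dots)=W(u_2,u_1,\dots)$, and observe that the pure shuffle $(x_1,x_2,x_3,x_4,a)$ produces $\frac{1}{6}|R|\,W(x_1,x_2,x_3,x_4)=\frac16 W(x_1,x_2,x_3)$, which cancels the constant $1$ in $W'_G(O)=1-\frac{12}{W(x_1,x_2,x_3)}W_G(O)$. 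The only cosmetic difference is that you organize the $15$ shuffles as $5+10$ by whether $v_6=x_4$, while the paper lists all $15$ terms at once before collapsing; both give the same seven-term double sum and three-term single sum. Your justification of $W(x_1,x_2,x_3)>0$ for part $(1)$ via $x_3,x_4\in N(x_1,x_2)$ and $x_4\in N(x_1,x_2,x_3)$ is a touch more direct than the paper's minimum-degree inequalities, though the min-degree bound is still needed (as you note) to make the deeper weights $W(\cdot)$ in part $(2)$ well-defined.
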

\begin{proof}
(1) Since $\delta(G)>\frac{4}{5}n$,
$$|N(x_1,x_2)|=|N(x_1)|+|N(x_2)|-|N(x_1)\cup N(x_2)|>2\cdot\frac{4}{5}n-n=\frac{3}{5}n,$$
and
$$|N(x_1,x_2,x_3)|=|N(x_1,x_2)|+|N(x_3)|-|N(x_1,x_2)\cup N(x_3)|>\frac{3}{5}n+\frac{4}{5}n-n=\frac{2}{5}n.$$
Then $W(x_1,x_2,x_3)=\frac{1}{|N(x_1,x_2)|\cdot|N(x_1,x_2,x_3)|}>0$ is well-defined. By $W'_G(O)\leq1$, $W_G(O)=\frac{W(x_1,x_2,x_3)}{12}(1-W'_G(O))\geq0$.

%Let $O=(x_1,x_2,x_3,x_4)\in\mathcal {OK}_4(G)$ and $R=N(x_1,x_2,x_3,x_4)$.
(2) By the definition of $W_G(O)$,
\begin{align*}
W_G(O)=&\frac{1}{2}\sum_{K=(v_1,\dots,v_6)\in\mathcal {OK}_6(G,O)}W(v_1,\dots,v_5)\psi_{G[V(K)],\{v_1,v_2\}}(G[V(O)]) \\
=&\frac{1}{2}\sum_{y\in R}\sum_{z\in N(y)\cap R}[\frac{1}{6}(W(x_1,x_2,x_3,x_4,y)+W(x_1,x_2,x_3,y,x_4)+W(x_1,x_2,x_3,y,z)+\\
&W(x_1,x_2,y,x_3,x_4)+W(x_1,x_2,y,x_3,z)+W(x_1,x_2,y,z,x_3))-\frac{1}{6}(W(x_1,y,x_2,x_3,x_4)+\\
&W(x_1,y,x_2,x_3,z)+W(x_1,y,x_2,z,x_3)+W(x_1,y,z,x_2,x_3)+W(y,x_1,x_2,x_3,x_4)+\\
&W(y,x_1,x_2,x_3,z)+W(y,x_1,x_2,z,x_3)+W(y,x_1,z,x_2,x_3))+\frac{1}{2}W(y,z,x_1,x_2,x_3)]\\
=&\frac{1}{2}\sum_{y\in R}\sum_{z\in N(y)\cap R}[\frac{1}{6}(W(x_1,x_2,x_3,x_4,y)+W(x_1,x_2,x_3,y,x_4)+W(x_1,x_2,x_3,y,z)+\\
&W(x_1,x_2,y,x_3,x_4)+W(x_1,x_2,y,x_3,z)+W(x_1,x_2,y,z,x_3))-\frac{1}{3}(W(x_1,y,x_2,x_3,x_4)+\\
&W(x_1,y,x_2,x_3,z)+W(x_1,y,x_2,z,x_3)+W(x_1,y,z,x_2,x_3))+\frac{1}{2}W(y,z,x_1,x_2,x_3)],
\end{align*}
where the last equality holds since $W(x_1,y,x_2,x_3,x_4)=W(y,x_1,x_2,x_3,x_4)$, $W(x_1,y,x_2,x_3,z)=W(y,x_1,x_2,x_3,z)$, $W(x_1,y,x_2,z,x_3)=W(y,x_1,x_2,z,x_3)$ and $W(x_1,y,z,x_2,x_3)=W(y,x_1,z,x_2,x_3)$ by $(\ref{def:W(K)})$.

Note that $W(x_1,x_2,x_3,x_4,y)$, $W(x_1,x_2,x_3,y,x_4)$, $W(x_1,x_2,y,x_3,x_4)$ and $W(x_1,y,x_2,x_3,x_4)$ do not depend on $z$.
Then
$$\sum_{z\in N(y)\cap R}W(x_1,x_2,x_3,x_4,y)=\sum_{z\in N(y)\cap R}W(x_1,x_2,x_3,x_4)\frac{1}{|N(x_1,x_2,x_3,x_4,y)|}=W(x_1,x_2,x_3,x_4),$$
$$\sum_{z\in N(y)\cap R}W(x_1,x_2,x_3,y,x_4)=\sum_{z\in N(y)\cap R}W(x_1,x_2,x_3,y)\frac{1}{|N(x_1,x_2,x_3,y,x_4)|}=W(x_1,x_2,x_3,y),$$
$$\sum_{z\in N(y)\cap R}W(x_1,x_2,y,x_3,x_4)=\sum_{z\in N(y)\cap R}W(x_1,x_2,y,x_3)\frac{1}{|N(x_1,x_2,y,x_3,x_4)|}=W(x_1,x_2,y,x_3),$$
and
$$\sum_{z\in N(y)\cap R}W(x_1,y,x_2,x_3,x_4)=\sum_{z\in N(y)\cap R}W(x_1,y,x_2,x_3)\frac{1}{|N(x_1,y,x_2,x_3,x_4)|}=W(x_1,y,x_2,x_3).$$
\begin{align*}
W_G(O)
&=\frac{1}{2}\sum_{y\in R}\frac{1}{6}(W(x_1,x_2,x_3,x_4)+W(x_1,x_2,x_3,y)+W(x_1,x_2,y,x_3))-\frac{1}{3}W(x_1,y,x_2,x_3)\\
&+\sum_{z\in N(y)\cap R}[\frac{1}{6}(W(x_1,x_2,x_3,y,z)+W(x_1,x_2,y,x_3,z)+W(x_1,x_2,y,z,x_3))\\
&-\frac{1}{3}(W(x_1,y,x_2,x_3,z)+W(x_1,y,x_2,z,x_3)+W(x_1,y,z,x_2,x_3))+\frac{1}{2}W(y,z,x_1,x_2,x_3)].
\end{align*}
Similarly, $W(x_1,x_2,x_3,x_4)$ does not depend on $y$, and so
\begin{align*}
\sum_{y\in R}W(x_1,x_2,x_3,x_4)=\sum_{y\in R}W(x_1,x_2,x_3)\frac{1}{|N(x_1,x_2,x_3,x_4)|}=W(x_1,x_2,x_3).
\end{align*}

Therefore,
\begin{align*}
W_G(O)=&\frac{1}{12}W(x_1,x_2,x_3)+\frac{1}{12}\sum_{y\in R}[W(x_1,x_2,x_3,y)+W(x_1,x_2,y,x_3)-\\
&2W(x_1,y,x_2,x_3)+\sum_{z\in N(y)\cap R}(W(x_1,x_2,x_3,y,z)+W(x_1,x_2,y,x_3,z)\\
&+W(x_1,x_2,y,z,x_3)-2W(x_1,y,x_2,x_3,z)-2W(x_1,y,x_2,z,x_3)-\\
&2W(x_1,y,z,x_2,x_3)+3W(y,z,x_1,x_2,x_3))].
\end{align*}
Then the desired conclusion is obtained.
\end{proof}

\begin{Theorem}\label{thm:W'_G(O)<=1}
If $G$ is a graph on $n$ vertices with minimum degree $\delta(G)\geq(1-\frac{2}{33})n$, then $W'_G(O)\leq 1$ for every $O\in\mathcal {OK}_4(G)$.
\end{Theorem}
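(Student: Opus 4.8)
The plan is to turn the identity for $W'_G(O)$ in Lemma~\ref{lem:W'_G(O)->W_G(O)}(2) into a nonlinear program: after one algebraic simplification its right-hand side becomes a function of finitely many normalised common-neighbourhood densities, constrained linearly by the minimum-degree hypothesis, and the claim becomes the assertion that this function never exceeds $1$.

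Fix $O=(x_1,x_2,x_3,x_4)$, put $R=N(x_1,x_2,x_3,x_4)$ and $d=\delta(G)/n\ge\frac{31}{33}$. For $y,z\in R$ with $z\in N(y)$ the set $\{x_1,x_2,x_3,x_4,y,z\}$ spans a $K_6$; for a subset $S$ of it let $\nu_S=|N(S)|/n$. As in Theorem~\ref{thm:frac_dec_equ=1}, $\nu_{S\cup\{v\}}\ge\nu_S+d-1$, hence $\nu_S\ge|S|\,d-(|S|-1)>0$ for every $S$ that occurs and all the $W$-values below are positive; in addition $\nu_S\le1$, $\nu_{S'}\le\nu_S$ when $S\subseteq S'$, and $\nu_{S\cup T}\ge\nu_S+\nu_T-1$. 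The key algebraic observation is that every summand of the bracket $\sum_{z\in N(y)\cap R}(\,\cdots\,)$ in Lemma~\ref{lem:W'_G(O)->W_G(O)}(2) carries the factor $1/|N(x_1,x_2,x_3,y,z)|$, and that after cancelling it the $z$-free remainder is exactly $A(y):=2W(x_1,y,x_2,x_3)-W(x_1,x_2,x_3,y)-W(x_1,x_2,y,x_3)$ — the very expression appearing before the $z$-sum. Concretely that bracket equals $\Xi(y,z)/|N(x_1,x_2,x_3,y,z)|$, where
\begin{align*}
\Xi(y,z)=A(y)+\frac{1}{|N(x_1,x_2,y,z)|}\bigl(&\tfrac{2}{|N(x_1,y)|\,|N(x_1,x_2,y)|}+\tfrac{2}{|N(x_1,y)|\,|N(x_1,y,z)|}\\
&-\tfrac{1}{|N(x_1,x_2)|\,|N(x_1,x_2,y)|}-\tfrac{3}{|N(y,z)|\,|N(x_1,y,z)|}\bigr),
\end{align*}
and $A(y)=\bigl(\tfrac{2}{|N(x_1,y)||N(x_1,x_2,y)|}-\tfrac{1}{|N(x_1,x_2)||N(x_1,x_2,x_3)|}-\tfrac{1}{|N(x_1,x_2)||N(x_1,x_2,y)|}\bigr)/|N(x_1,x_2,x_3,y)|$. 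Consequently
\begin{align*}
W'_G(O)=\frac{1}{W(x_1,x_2,x_3)}\sum_{y\in R}\biggl[&A(y)\Bigl(1+\sum_{z\in N(y)\cap R}\tfrac1{|N(x_1,x_2,x_3,y,z)|}\Bigr)\\
&{}+\sum_{z\in N(y)\cap R}\frac{\Xi(y,z)-A(y)}{|N(x_1,x_2,x_3,y,z)|}\biggr],
\end{align*}
and the only densities that appear are the two fixed ones $\nu_{\{x_1,x_2\}},\nu_{\{x_1,x_2,x_3\}}$, the four attached to $y$, and the four attached to $(y,z)$.

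Next I would eliminate the variables in layers, losing as little as possible at each step. For the inner $z$-sum: for each occurring set $S\supseteq\{x_1,x_2,x_3\}$ use $|N(S\cup\{z\})|\ge|N(S)|-(1-d)n$ and, since $|N(y)\cap R|=|N(x_1,x_2,x_3,x_4,y)|$,
\[
\sum_{z\in N(y)\cap R}\frac1{|N(x_1,x_2,x_3,x_4,y,z)|}\le\frac{|N(y)\cap R|}{|N(y)\cap R|-(1-d)n},
\]
together with the analogous collective estimates for $|N(y,z)|,|N(x_1,y,z)|,|N(x_1,x_2,y,z)|$ coming from $\nu_{\{y,z\}}\ge2d-1,\ \nu_{\{x_1,y,z\}}\ge3d-2,\ \nu_{\{x_1,x_2,y,z\}}\ge4d-3,\ \nu_{\{x_1,x_2,x_3,y,z\}}\ge5d-4$; the sign of each occurrence decides whether one applies such a lower bound or the trivial $\nu\le1$. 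This bounds the inner sum by a function of $d$, the two fixed densities, and the four $y$-densities, and a monotonicity check replaces $\nu_{\{x_1,y\}},\nu_{\{x_1,x_2,y\}},\nu_{\{x_1,x_2,x_3,y\}},\nu_{\{x_1,x_2,x_3,x_4,y\}}$ by the relevant endpoints $2d-1,3d-2,4d-3,5d-4$ (each also $\le$ the previous one and $\le1$). The same procedure applied to $\sum_{y\in R}$, with $\sum_{y\in R}1/|N(x_1,x_2,x_3,y)|\le|R|/(|N(x_1,x_2,x_3)|-(1-d)n)$ and $|R|/n\ge4d-3$, leaves a rational bound on $W'_G(O)$ in $d$, $\nu_{\{x_1,x_2\}}\in[2d-1,1]$, $\nu_{\{x_1,x_2,x_3\}}\in[3d-2,\nu_{\{x_1,x_2\}}]$ and $r:=|R|/n\in[4d-3,\nu_{\{x_1,x_2,x_3\}}]$; a final monotonicity analysis optimises these out and reduces the claim to a single polynomial inequality in $d$, true exactly for $d\ge\frac{31}{33}$.

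The main obstacle is this optimisation. Each $W$-value is a product of reciprocal densities, so the positively-signed terms grow as the densities shrink while the negatively-signed ones grow as the densities grow; no single extreme configuration is obviously the worst, and one cannot afford to bound the positive and negative parts separately — crude term-by-term estimates that discard the negative terms overshoot $1$ by a factor bounded away from $1$ (the $2W(x_1,y,x_2,x_3)$ terms alone contribute roughly $3.7$ at the worst-case density values). Hence the negative terms, in particular $-3W(y,z,x_1,x_2,x_3)$ — the reason a $K_6$ edge-gadget is used — must be retained and their cancellation against the positives tracked with no slack through every elimination. Arranging the eliminations so that this is possible, and then verifying that $\frac{31}{33}$ (and no larger constant) makes the resulting polynomial inequality true, is the heart of the argument; the bound should be sharp for estimates of this shape, since for $d<\frac{31}{33}$ the optimal density configuration already yields $W'_G(O)>1$.
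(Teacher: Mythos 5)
Your overall plan matches the paper's: rewrite $W'_G(O)$ in terms of normalised common-neighbourhood densities, treat those densities as variables of a nonlinear program constrained by the minimum-degree hypothesis, and reduce to a single polynomial inequality in $d$. Your preliminary algebra is also correct --- every summand of the inner $z$-sum in Lemma~\ref{lem:W'_G(O)->W_G(O)}(2) does carry the factor $1/|N(x_1,x_2,x_3,y,z)|$, and your decomposition of the bracket into $A(y)$ plus a $z$-dependent correction $\Xi(y,z)-A(y)$ checks out term by term. However, the proposal stops where the actual work starts, and the mechanism you gesture at would not close the gap.

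The genuine problem is that you have identified the central difficulty but not resolved it; worse, your sketched resolution contradicts your own diagnosis. You correctly observe that ``one cannot afford to bound the positive and negative parts separately --- crude term-by-term estimates that discard the negative terms overshoot $1$,'' and then you propose exactly a term-by-term endpoint substitution where ``the sign of each occurrence decides whether one applies such a lower bound or the trivial $\nu\le1$.'' Because the same density appears in terms of both signs (e.g.\ $|N(x_1,x_2,y)|$ appears with a $+$ in $2W(x_1,y,x_2,x_3,\cdot)$ and with a $-$ in $W(x_1,x_2,y,\cdot,\cdot)$), the pointwise replacement is not monotone in each variable, so simply pushing every density to an endpoint is not legitimate, and the slack you would incur is precisely the overshoot you flagged. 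The paper gets around this in three steps that your proposal omits: (i) it first uses Weierstrass' theorem to show the optimum of the exact NLP (P1) is attained and can be attained with all the $z$-indexed variables equal and then all the $y$-indexed variables equal, shrinking the problem to a fixed finite list of variables with \emph{no loss at all} (Lemmas~\ref{lem:(P1)} and~\ref{lem:(P2)}); (ii) it then rewrites the objective algebraically so that, after replacing certain differences by their \emph{ramp functions} $(\cdot)^+$, the objective becomes a sum of manifestly non-negative terms each monotone in each remaining variable in a controllable direction (the passage from (P3) to (P4)); and (iii) only after this rewrite does it fix variables at endpoints, one at a time, each time verifying the required monotonicity explicitly (Lemmas~\ref{lem:P4_maximum}--\ref{lem:P13>=P12}), finishing with the polynomial $W(d)$ and a fourth-derivative sign check. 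The ramp-function step is the key device that makes sign-by-sign endpoint bounding safe; nothing in your proposal plays that role.

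Two smaller points. First, your collective estimate $\sum_{z\in N(y)\cap R}1/|N(\cdots,y,z)|\le|N(y)\cap R|/(|N(y)\cap R|-(1-d)n)$ is valid but is genuinely looser than the paper's treatment, which keeps $\sum_{z}1/|N(\cdots,y,z)|$ exact (it is $|N(y)\cap R|$ times a variable) and only relaxes the constraints; that extra looseness would cost you the sharp constant. Second, the closing claim that ``for $d<\frac{31}{33}$ the optimal density configuration already yields $W'_G(O)>1$'' is not established anywhere in your argument and is not claimed in the paper either; the bound $\frac{31}{33}$ is what this chain of relaxations delivers, not a provable threshold for the construction.
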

Theorem \ref{thm:W'_G(O)<=1} is proved in the following two sections by solving a nonlinear programming. We conclude this section by proving Theorem \ref{thm:frac_K_4_decom} assuming Theorem \ref{thm:W'_G(O)<=1} holds.
\begin{proof}[{\bf Proof of Theorem \ref{thm:frac_K_4_decom}}]
Suppose that Theorem \ref{thm:W'_G(O)<=1} holds. That is, $W'_G(O)\leq 1$ for each $O\in\mathcal {OK}_4(G)$. it follows from Lemma \ref{lem:W'_G(O)->W_G(O)} (1) that $W_G(O)\geq 0$ for every $O\in\mathcal {OK}_4(G)$. Then $W_G(T)\geq 0$ for all $T\in\mathcal {K}_4(G)$ by Lemma \ref{lem:sum_W_G(O)}.

Since $\frac{31}{33}>\frac{4}{5}$, Theorem \ref{thm:frac_dec_equ=1} implies that $\sum_{T\in\mathcal K_4(G,e)}W_G(T)=1$ for any $e\in E(G)$.

%Since $\frac{31}{33}>\frac{4}{5}$, it follows from Theorem \ref{thm:frac_dec_equ=1} that $\sum_{T\in\mathcal K_4(G,e)}W_G(T)=1$ for any $e\in E(G)$.

Thus $W_G(T)$ is the desired fractional $K_4$-decomposition.
\end{proof}

\section{Formulate a nonlinear programming}
%为了将后面的变量都变成有界的即均在[0,1]中
To obtain a nonlinear programming that each variable is in $[0,1]$, we introduce the following definition and provide some useful lemmas.
\subsection{The common neighbor density}
Let $S\subseteq V(G)$ and $|V(G)|=n$. The {\em common neighbor density} of $S$ is defined as
$$\widehat{N}(S)=\frac{|V(G)\setminus(\bigcup_{s\in S}(V(G)\setminus N(s)))|}{n}.$$
Similarly, we use $\widehat{N}(v_1,\dots,v_r)$ instead of $\widehat{N}(S)$ for $S=\{v_1,\dots,v_r\}\subseteq V(G)$ sometime.
Note that $\widehat{N}(\emptyset)=1$ and if $S\neq\emptyset$, then $\widehat{N}(S)=\frac{|\bigcap_{s\in S}N(s)|}{n}=\frac{|N(S)|}{n}$ by De Morgan's laws.

The following two lemmas about the common neighbor density are mentioned in \cite{DP21}. For completeness, we also provide a proof here.
\begin{Lemma}\label{lem:decrease_N(K)}
Let $G$ be a graph. If $S\subseteq S'\subseteq V(G)$, then $\widehat{N}(S)\geq\widehat{N}(S')$.
\end{Lemma}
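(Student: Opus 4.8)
The statement is a pure monotonicity fact, so the plan is to unwind the definition of $\widehat N$ and chase inclusions of sets, then pass to cardinalities. Writing $\overline{N}(s)=V(G)\setminus N(s)$ for the non-neighbourhood of $s$, the defining quantity is $\widehat N(S)=\tfrac1n\,\bigl|V(G)\setminus \bigcup_{s\in S}\overline{N}(s)\bigr|$, where $n=|V(G)|$.

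First I would observe that from $S\subseteq S'$ we immediately get the inclusion $\bigcup_{s\in S}\overline{N}(s)\subseteq \bigcup_{s\in S'}\overline{N}(s)$, since every term on the left also appears on the right (and when $S=\emptyset$ the left union is empty, which is contained in anything, consistent with the stated convention $\widehat N(\emptyset)=1$). Next I would take complements inside $V(G)$: complementation is inclusion-reversing, so
\[
V(G)\setminus\Bigl(\bigcup_{s\in S'}\overline{N}(s)\Bigr)\ \subseteq\ V(G)\setminus\Bigl(\bigcup_{s\in S}\overline{N}(s)\Bigr).
\]
Finally, since cardinality is monotone with respect to set inclusion and $n>0$ is a fixed positive constant, dividing both sides by $n$ yields $\widehat N(S')\le\widehat N(S)$, which is exactly the claim.

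There is essentially no obstacle here; the only thing to be careful about is bookkeeping with the complement notation (the definition of $\widehat N$ is written in the slightly awkward "complement of a union of complements" form, and one should either keep it in that form or convert it via De Morgan to $\tfrac1n\,|\bigcap_{s\in S}N(s)|$ as noted just before the lemma, and then simply use $S\subseteq S'\Rightarrow \bigcap_{s\in S'}N(s)\subseteq\bigcap_{s\in S}N(s)$). Either route gives a two-line argument; I would present the intersection version for brevity, flagging the $S=\emptyset$ case against the convention $\widehat N(\emptyset)=1$.
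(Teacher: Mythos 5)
Your proof is correct and follows essentially the same route as the paper: both reduce to the monotonicity of set inclusion under taking the common neighbourhood and then pass to cardinalities, with the $S=\emptyset$ case handled via the convention $\widehat N(\emptyset)=1$. The paper simply splits off $S=\emptyset$ as an explicit case and uses the intersection form $N(S')\subseteq N(S)$ directly, which your remark at the end already identifies as an equivalent presentation.
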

\begin{proof}
If $S=\emptyset$, then $\widehat{N}(S)=1$, and so $\widehat{N}(S')\leq 1=\widehat{N}(S)$. Suppose that $\emptyset \neq S\subseteq S'$. Then $N(S')=\bigcap_{s\in S'}N(s)\subseteq\bigcap_{s\in S}N(s)=N(S)$, and so $\widehat{N}(S)\geq\widehat{N}(S')$.
%$V(G)\setminus(\bigcup_{s\in S'}(V(G)\setminus N(s)))\subseteq V(G)\setminus(\bigcup_{s\in S}(V(G)\setminus N(s)))$. Thus, $\widehat{N}(S)\geq\widehat{N}(S')$.
\end{proof}
\begin{Lemma}\label{lem:N(S)_1}
Let $G$ be a graph. If $A,B\subseteq V(G)$, then $\widehat{N}(A\cup B)\geq \widehat{N}(A)+\widehat{N}(B)-\widehat{N}(A\cap B)$.
\end{Lemma}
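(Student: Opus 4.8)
The plan is to reduce the claimed inequality to the submodularity of a coverage function and then establish that submodularity by a one-line inclusion–exclusion argument. First I would pass to complements: for $v\in V(G)$ write $\overline{N}(v)=V(G)\setminus N(v)$ for the non-neighbourhood of $v$, and observe that for \emph{every} $S\subseteq V(G)$ — including $S=\emptyset$, with the convention that a union over the empty index set is $\emptyset$ — the definition gives
$$\widehat{N}(S)=1-\frac{1}{n}\Bigl|\bigcup_{s\in S}\overline{N}(s)\Bigr|.$$
This is immediate from De Morgan's laws and is consistent with $\widehat{N}(\emptyset)=1$.

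Substituting this identity for $S\in\{A\cup B,\,A,\,B,\,A\cap B\}$ into the desired inequality, all the constant terms cancel and, after multiplying through by $-n$, the claim becomes exactly
$$\Bigl|\bigcup_{s\in A\cup B}\overline{N}(s)\Bigr|+\Bigl|\bigcup_{s\in A\cap B}\overline{N}(s)\Bigr|\leq\Bigl|\bigcup_{s\in A}\overline{N}(s)\Bigr|+\Bigl|\bigcup_{s\in B}\overline{N}(s)\Bigr|,$$
that is, the submodularity of the set function $S\mapsto\bigl|\bigcup_{s\in S}\overline{N}(s)\bigr|$.

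To prove this, set $X=\bigcup_{s\in A}\overline{N}(s)$ and $Y=\bigcup_{s\in B}\overline{N}(s)$. Then $\bigcup_{s\in A\cup B}\overline{N}(s)=X\cup Y$, and since $A\cap B\subseteq A$ and $A\cap B\subseteq B$ we have $\bigcup_{s\in A\cap B}\overline{N}(s)\subseteq X\cap Y$. By inclusion–exclusion,
$$\Bigl|\bigcup_{s\in A\cup B}\overline{N}(s)\Bigr|=|X\cup Y|=|X|+|Y|-|X\cap Y|\leq|X|+|Y|-\Bigl|\bigcup_{s\in A\cap B}\overline{N}(s)\Bigr|,$$
which is the required inequality; dividing by $n$ and rearranging recovers the statement of the lemma.

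There is no substantive obstacle: once the problem is phrased in terms of non-neighbourhoods it is the standard fact that coverage functions are submodular. The only point needing a moment's care is the bookkeeping around empty sets — that the displayed formula for $\widehat{N}(S)$ remains valid when $S$, or $A\cap B$, is empty — which is why I would record the unified identity for $\widehat{N}(S)$ at the outset rather than splitting into cases (as in Lemma \ref{lem:decrease_N(K)}). Alternatively one could dispatch the cases $A=\emptyset$, $B=\emptyset$, and $A\cap B=\emptyset$ separately — in each the inequality holds with equality or reduces to the trivial union bound $|X\cup Y|\leq|X|+|Y|$ — and then handle the generic case exactly as above.
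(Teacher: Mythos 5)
Your proof is correct. The underlying combinatorial fact is the same as in the paper's argument (an inclusion--exclusion identity plus a monotonicity observation), but you organize it differently: you unfold the paper's own definition of $\widehat{N}(S)$ as $1-\tfrac1n\bigl|\bigcup_{s\in S}\overline{N}(s)\bigr|$, note that this identity is uniformly valid including at $S=\emptyset$, and reduce the lemma to the submodularity of the coverage function $S\mapsto\bigl|\bigcup_{s\in S}\overline{N}(s)\bigr|$, which you dispatch in one line. The paper instead rewrites everything in terms of $N(S)=\bigcap_{s\in S}N(s)$, which forces a case analysis over whether $A$, $B$, or $A\cup B$ is empty (since the empty-intersection convention is not explicitly in force), and then runs the same inclusion--exclusion in the non-degenerate case. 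Your route is slightly cleaner because the complement/coverage formulation handles the empty set automatically, so no case-splitting is needed; the trade-off is that you have to record the unified identity up front, which is a one-liner. Either presentation is perfectly acceptable here.
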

\begin{proof}
If $A\cup B=\emptyset$, then $A=B=\emptyset$ and so $\widehat{N}(A\cup B)=\widehat{N}(A)=\widehat{N}(B)=\widehat{N}(A\cap B)=1$. Thus, $\widehat{N}(A\cup B)\geq\widehat{N}(A)+\widehat{N}(B)-\widehat{N}(A\cap B)$.

Suppose that $A=\emptyset$ and $B\neq\emptyset$. Then $\widehat{N}(A\cup B)=\widehat{N}(B)$ and $\widehat{N}(A)=\widehat{N}(A\cap B)=1$. Therefore, $\widehat{N}(A\cup B)\geq\widehat{N}(A)+\widehat{N}(B)-\widehat{N}(A\cap B)$. Similarly, if $A\neq\emptyset$ and $B=\emptyset$, then $\widehat{N}(A\cup B)=\widehat{N}(A)$ and $\widehat{N}(B)=\widehat{N}(A\cap B)=1$. Thus $\widehat{N}(A\cup B)\geq\widehat{N}(A)+\widehat{N}(B)-\widehat{N}(A\cap B)$.

Suppose that $A\neq\emptyset$ and $B\neq\emptyset$. Recall that $N(A\cup B)=N(A)\cap N(B)$. Then $$|N(A\cup B)|=|N(A)\cap N(B)|=|N(A)|+|N(B)|-|N(A)\cup N(B)|.$$
Since $N(A)\subseteq N(A\cap B)$ and $N(B)\subseteq N(A\cap B)$, $N(A)\cup N(B)\subseteq N(A\cap B)$, and so $-|N(A)\cup N(B)|\geq -|N(A\cap B)|$. Then $|N(A\cup B)|\geq |N(A)|+|N(B)|-|N(A\cap B)|$.
%Thus $|N(A\cap B)|\geq |N(A)\cup N(B)|=|N(A)|+|N(B)|-|N(A)\cap N(B)|=|N(A)|+|N(B)|-|N(A\cup B)|$.
Thus, $\widehat{N}(A\cup B)\geq\widehat{N}(A)+\widehat{N}(B)-\widehat{N}(A\cap B)$.
\end{proof}

The following two lemmas ensure that each variable of all programs in Sections \ref{sec:main_program} and \ref{sec:solve_program} is strictly positive.

\begin{Lemma}\label{lem:N(S)_2}
Let $G$ be an $n$-vertices graph with minimum degree $\delta(G)>\frac{4n}{5}$. If $S\subseteq V(G)$ such that $1\leq |S|\leq 5$, then
$$\widehat{N}(S)>1-\frac{|S|}{5}.$$
\end{Lemma}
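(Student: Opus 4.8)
The plan is to prove the bound by induction on $|S|$, using Lemma~\ref{lem:N(S)_1} as essentially the only tool and peeling off a single vertex at each step. The base case $|S|=1$ is immediate: for any $v\in V(G)$ one has $\widehat{N}(\{v\})=\frac{|N(v)|}{n}\geq\frac{\delta(G)}{n}>\frac{4}{5}=1-\frac{1}{5}$, which is exactly the claimed inequality when $|S|=1$.

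For the inductive step I would assume the bound for all sets of size $k$ with $1\leq k\leq 4$ and take $S$ with $|S|=k+1\leq 5$. Pick any $v\in S$ and set $S_0=S\setminus\{v\}$, so $|S_0|=k\geq 1$ and, crucially, $v\notin S_0$, whence $S_0\cap\{v\}=\emptyset$. Applying Lemma~\ref{lem:N(S)_1} with $A=S_0$ and $B=\{v\}$, and using $\widehat{N}(\emptyset)=1$, gives $\widehat{N}(S)\geq\widehat{N}(S_0)+\widehat{N}(\{v\})-1$. Feeding in the inductive hypothesis $\widehat{N}(S_0)>1-\frac{k}{5}$ together with the one-vertex estimate $\widehat{N}(\{v\})>1-\frac{1}{5}$ then yields $\widehat{N}(S)>(1-\frac{k}{5})+(1-\frac{1}{5})-1=1-\frac{k+1}{5}$, which closes the induction. (Alternatively the same arithmetic can be run directly on the cardinalities $|N(\cdot)|$ by iterated inclusion--exclusion, exactly as in the proof of Theorem~\ref{thm:frac_dec_equ=1}; I would pick whichever bookkeeping reads more cleanly.)

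I do not expect any genuine obstacle here: the whole content is the trivial one-vertex bound combined with the submodularity inequality already established in Lemma~\ref{lem:N(S)_1}. The hypothesis $1\leq|S|\leq 5$ is present only to keep the conclusion meaningful — the right-hand side $1-\frac{|S|}{5}$ is nonnegative precisely when $|S|\leq 5$ — and this strict positivity of $\widehat{N}(S)$ for such $S$ is what is needed downstream to guarantee that the quantities $W(v_1,\dots,v_r)$, hence all the variables of the nonlinear programs in Sections~\ref{sec:main_program} and~\ref{sec:solve_program}, are well defined and strictly positive. The only point requiring a bit of care is to separate off a vertex that is \emph{not} already in $S$, so that the intersection term in Lemma~\ref{lem:N(S)_1} collapses to $\widehat{N}(\emptyset)=1$ rather than to something larger, which would weaken the recursion.
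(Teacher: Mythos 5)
Your proposal is correct and is essentially the same argument as the paper's: the paper also peels off one vertex at a time via repeated applications of Lemma~\ref{lem:N(S)_1} with $\widehat{N}(\emptyset)=1$, the only cosmetic difference being that the paper first proves $\widehat{N}(S)\geq 1-|S|d$ with $d=1-\delta(G)/n$ and then invokes $d<\tfrac{1}{5}$ at the end, whereas you substitute $\tfrac{1}{5}$ at each step.
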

\begin{proof}
Let $S=\{v_1,\dots,v_{|S|}\}$ and $d=1-\frac{\delta(G)}{n}<\frac{1}{5}$.
By the definition of $d$, $\widehat{N}(v)\geq1-d$ for any $v\in S$.
It follows from Lemma \ref{lem:N(S)_1} that $\widehat{N}(v_1,v_2)\geq\widehat{N}(v_1)+\widehat{N}(v_2)-\widehat{N}(\emptyset)\geq(1-d)+(1-d)-1=1-2d$.
By repeated applications of Lemma \ref{lem:N(S)_1}, $\widehat{N}(v_1,\dots,v_{|S|})\geq\widehat{N}(v_1,\dots,v_{|S|-1})+\widehat{N}(v_{|S|})-\widehat{N}(\emptyset)\geq[1-(|S|-1)d]+(1-d)-1=1-|S|d$.
Thus $\widehat{N}(S)\geq1-|S|d>1-\frac{|S|}{5}$.
\end{proof}
\begin{Lemma}\label{lem:N(S)_3}
Let $G$ be an $n$-vertices graph with minimum degree $\delta(G)>\frac{4n}{5}$. If $A,B\subseteq V(G)$ with $A\neq\emptyset$, $B\neq\emptyset$ and $|A\cup B|\leq 5$, then
$$\widehat{N}(A)+\widehat{N}(B)-\widehat{N}(A\cap B)>0.$$
\end{Lemma}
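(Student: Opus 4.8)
The plan is to reduce the statement to the two facts already in hand: the bound $\widehat{N}(S)>1-\frac{|S|}{5}$ for $1\leq|S|\leq 5$ from Lemma~\ref{lem:N(S)_2} (together with the convention $\widehat{N}(\emptyset)=1$), and the submodular-type inequality $\widehat{N}(X\cup Y)\geq\widehat{N}(X)+\widehat{N}(Y)-\widehat{N}(X\cap Y)$ of Lemma~\ref{lem:N(S)_1}. I would split into two cases according to whether one of $A,B$ contains the other.

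If $A\subseteq B$ (the case $B\subseteq A$ being symmetric), then $A\cap B=A$, so $\widehat{N}(A)+\widehat{N}(B)-\widehat{N}(A\cap B)=\widehat{N}(B)$; since $1\leq|B|=|A\cup B|\leq 5$, Lemma~\ref{lem:N(S)_2} gives $\widehat{N}(B)>1-\frac{|B|}{5}\geq 0$, which settles this case.

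Otherwise neither set contains the other, so $A':=A\setminus B$ and $B':=B\setminus A$ are both nonempty; together with $C:=A\cap B$ (possibly empty) they are pairwise disjoint, $A=A'\cup C$, $B=B'\cup C$, and $|A'|+|B'|+|C|=|A\cup B|\leq 5$. Applying Lemma~\ref{lem:N(S)_1} to $A=A'\cup C$, where $A'\cap C=\emptyset$ so that $\widehat{N}(A'\cap C)=1$, gives $\widehat{N}(A)\geq\widehat{N}(A')+\widehat{N}(C)-1$, and likewise $\widehat{N}(B)\geq\widehat{N}(B')+\widehat{N}(C)-1$. Subtracting $\widehat{N}(A\cap B)=\widehat{N}(C)$ yields
\[
\widehat{N}(A)+\widehat{N}(B)-\widehat{N}(A\cap B)\geq\widehat{N}(A')+\widehat{N}(B')+\widehat{N}(C)-2.
\]
I would then bound the right-hand side below: Lemma~\ref{lem:N(S)_2} gives $\widehat{N}(A')>1-\frac{|A'|}{5}$ and $\widehat{N}(B')>1-\frac{|B'|}{5}$, while $\widehat{N}(C)>1-\frac{|C|}{5}$ by Lemma~\ref{lem:N(S)_2} when $C\neq\emptyset$ and $\widehat{N}(C)=1$ when $C=\emptyset$. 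In either case $\widehat{N}(A')+\widehat{N}(B')+\widehat{N}(C)>3-\frac{|A'|+|B'|+|C|}{5}\geq 3-1=2$, so the right-hand side above is strictly positive, as required.

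The whole lemma is a short computation, so I do not expect a genuine obstacle; the only point demanding care is the bookkeeping of the degenerate configurations — one of $A,B$ contained in the other, or $A\cap B=\emptyset$ — which is exactly why I peel off the containment case first and permit $C=\emptyset$ in the second case. Every invocation of Lemma~\ref{lem:N(S)_2} is on a nonempty set of size at most $5$, since all sets appearing are subsets of $A\cup B$, so its hypotheses are met automatically.
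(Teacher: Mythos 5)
Your proof is correct, and it rests on exactly the same two ingredients the paper uses (Lemma~\ref{lem:N(S)_1} and Lemma~\ref{lem:N(S)_2}) with the same shape: handle the containment case directly, then use submodularity plus the size bound. The only variation is that you decompose symmetrically, applying Lemma~\ref{lem:N(S)_1} to both $A=(A\setminus B)\cup(A\cap B)$ and $B=(B\setminus A)\cup(A\cap B)$, whereas the paper peels the intersection off only one side: it writes $\widehat{N}(B)-\widehat{N}(A\cap B)\geq\widehat{N}(B\setminus A)-1$ and adds the raw bound $\widehat{N}(A)>1-|A|/5$, obtaining $1-|A\cup B|/5\geq 0$ in one step. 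That asymmetric version needs only a single application of Lemma~\ref{lem:N(S)_1} and a single case split (on whether $B\setminus A=\emptyset$), so it is marginally shorter; both versions are valid and your bookkeeping of the $C=\emptyset$ degenerate case is handled correctly.
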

\begin{proof}
By Lemma \ref{lem:N(S)_2}, $\widehat{N}(A)>1-\frac{|A|}{5}\geq0$ since $|A|\leq |A\cup B|\leq 5$.

If $B\subseteq A$, then $\widehat{N}(B)=\widehat{N}(A\cap B)$, and so $\widehat{N}(A)+\widehat{N}(B)-\widehat{N}(A\cap B)=\widehat{N}(A)>0.$

Suppose that $B\setminus A\neq\emptyset$. It follows from Lemma \ref{lem:N(S)_1} that $\widehat{N}(B)\geq\widehat{N}(B\cap A)+\widehat{N}(B\setminus A)-\widehat{N}(\emptyset)$. That is, $$\widehat{N}(B)-\widehat{N}(A\cap B)\geq\widehat{N}(B\setminus A)-\widehat{N}(\emptyset)=\widehat{N}(B\setminus A)-1.$$
Since $\widehat{N}(B\setminus A)>1-\frac{|B\setminus A|}{5}$ by Lemma \ref{lem:N(S)_2}, $\widehat{N}(B)-\widehat{N}(A\cap B)>-\frac{|B\setminus A|}{5},$ and so
$$\widehat{N}(A)+\widehat{N}(B)-\widehat{N}(A\cap B)>\left(1-\frac{|A|}{5}\right)-\frac{|B\setminus A|}{5}=1-\frac{|A\cup B|}{5}.$$
By $|A\cup B|\leq 5$, $\widehat{N}(A)+\widehat{N}(B)-\widehat{N}(A\cap B)>1-\frac{|A\cup B|}{5}\geq0.$
\end{proof}

\subsection{Main programming}\label{sec:main_program}
Suppose that $G$ is a graph on $n$ vertices with minimum degree $\delta(G)>\frac{4n}{5}$, $d=1-\frac{\delta(G)}{n}$, $O=(x_1,x_2,x_3,x_4)\in\mathcal {OK}_4(G)$ and $R=N(x_1,x_2,x_3,x_4)$ in this section. To prove Theorem \ref{thm:W'_G(O)<=1}, we first formulate a programming with the objective function $W'_G(O)$. %Note that $d<\frac{1}{5}$.

For every $r\in\{2,3,4,5\}$ and every $K=(v_1,\dots,v_r)\in\mathcal {OK}_r(G)$, define a scaled weight
$$\widehat{W}(K)=n^{r-1}\cdot W(K)=\prod_{i=2}^{r}\frac{1}{\widehat{N}(v_1,\dots,v_r)}.$$
For ease of reading, we will let $\widehat{W}(v_1 ,\dots, v_r)=\widehat{W}(K)$. By the definition of $\widehat{W}(v_1 ,\dots, v_r)$, we will rewrite $W'_G(O)$ as follows:
\begin{align}\label{eq:W'_G(O)}
W'_G(O)=&\frac{1}{\widehat{W}(x_1,x_2,x_3)\cdot n}\sum_{y\in R}[2\widehat{W}(x_1,y,x_2,x_3)-\widehat{W}(x_1,x_2,x_3,y)-\widehat{W}(x_1,x_2,y,x_3)+\notag \\
&\sum_{z\in N(y)\cap R}\frac{1}{n}(2\widehat{W}(x_1,y,x_2,x_3,z)+2\widehat{W}(x_1,y,x_2,z,x_3)+2\widehat{W}(x_1,y,z,x_2,x_3)-\\
&\widehat{W}(x_1,x_2,x_3,y,z)-\widehat{W}(x_1,x_2,y,x_3,z)-\widehat{W}(x_1,x_2,y,z,x_3)-3\widehat{W}(y,z,x_1,x_2,x_3))]\notag
\end{align}
Without loss of generality, suppose that $R=\{y_1,y_2,\dots,y_{R_0}\}$ and $N(y_i)\cap R=\{z_{i,1},\dots,z_{i,R_i}\}$ for any $1\leq i\leq R_0$. Let us replace the neighborhood densities in (\ref{eq:W'_G(O)}) with variable names as follows:
\begin{enumerate}
\item[$\bullet$] $x=\widehat{N}(x_1)$,
\item[$\bullet$] $y'_i=\widehat{N}(y_i)$ for all $1\leq i\leq R_0$,
\item[$\bullet$] $e_0=\widehat{N}(x_1,x_2)$,
\item[$\bullet$] $e_i=\widehat{N}(x_1,y_i)$ for all $1\leq i\leq R_0$,
\item[$\bullet$] $f_{i,j}=\widehat{N}(y_i,z_{i,j})$ for all $1\leq i\leq R_0$ and $1\leq j\leq R_i$,
\item[$\bullet$] $g_0=\widehat{N}(x_1,x_2,x_3)$,
\item[$\bullet$] $q_{i,0}=\widehat{N}(x_1,x_2,y_i)$ for all $1\leq i\leq R_0$,
\item[$\bullet$] $q_{i,j}=\widehat{N}(x_1,y_i,z_{i,j})$ for all $1\leq i\leq R_0$ and $1\leq j\leq R_i$,
\item[$\bullet$] $p_{i,0}=\widehat{N}(x_1,x_2,x_3,y_i)$ for all $1\leq i\leq R_0$,
\item[$\bullet$] $p_{i,j}=\widehat{N}(x_1,x_2,y_i,z_{i,j})$ for all $1\leq i\leq R_0$ and $1\leq j\leq R_i$,
\item[$\bullet$] $h_{i,j}=\widehat{N}(x_1,x_2,x_3,y_i,z_{i,j})$ for all $1\leq i\leq R_0$ and $1\leq j\leq R_i$.
\end{enumerate}
So
%\begin{equation}\label{eq:W'_G(O)_1}
\begin{align*}
W'_G(O)=&\frac{e_0g_0}{n}\sum_{i=1}^{R_0}[\frac{2}{e_iq_{i,0}p_{i,0}}-\frac{1}{e_0g_0p_{i,0}}-\frac{1}{e_0q_{i,0}p_{i,0}}+
\sum_{j=1}^{R_i}\frac{1}{n}(\frac{2}{e_iq_{i,0}p_{i,0}h_{i,j}}+\frac{2}{e_iq_{i,0}p_{i,j}h_{i,j}}\\
&+\frac{2}{e_iq_{i,j}p_{i,j}h_{i,j}}-\frac{1}{e_0g_0p_{i,0}h_{i,j}}-\frac{1}{e_0q_{i,0}p_{i,0}h_{i,j}}-\frac{1}{e_0q_{i,0}p_{i,j}h_{i,j}}
-\frac{3}{f_{i,j}q_{i,j}p_{i,j}h_{i,j}})]\\
=&\frac{e_0g_0}{n}\sum_{i=1}^{R_0}[\frac{1}{p_{i,0}}(\frac{2}{e_iq_{i,0}}-\frac{1}{e_0g_0}-\frac{1}{e_0q_{i,0}})+
\sum_{j=1}^{R_i}\frac{1}{n\cdot h_{i,j}}(\frac{2}{e_iq_{i,0}p_{i,0}}+\frac{2}{e_iq_{i,0}p_{i,j}}+\\
&\frac{2}{e_iq_{i,j}p_{i,j}}-\frac{1}{e_0g_0p_{i,0}}-\frac{1}{e_0q_{i,0}p_{i,0}}-\frac{1}{e_0q_{i,0}p_{i,j}}
-\frac{3}{f_{i,j}q_{i,j}p_{i,j}})].
\end{align*}
%\end{equation}
By the definition of $x,y'_i$ and $d$, $x,y'_i\in[1-d,1]$ for any $1\leq i\leq R_0$. It follows from Lemmas \ref{lem:decrease_N(K)} and \ref{lem:N(S)_1} that
$$x-d\leq e_0\leq x,\text{    }x+y'_i-1\leq e_i\leq x, \text{    }y'_i-d\leq f_{i,j}\leq y'_i,$$
$$e_0-d\leq g_0\leq e_0,\text{    }e_0+e_i-x\leq q_{i,0}\leq e_0,\text{    }e_i+f_{i,j}-y'_i\leq q_{i,j}\leq e_i,$$
$$g_0+e_i-x\leq p_{i,0}\leq g_0,\text{    }q_{i,0}+f_{i,j}-y'_i\leq p_{i,j}\leq q_{i,0}\text{ and }p_{i,0}+q_{i,j}-e_{i}\leq h_{i,j}\leq p_{i,0}.$$
Note that $R_0=|N(v_1,v_2,v_3,v_4)|=n\cdot\widehat{N}(v_1,v_2,v_3,v_4)$ and $R_i=|N(v_1,v_2,v_3,v_4,y_i)|=n\cdot\widehat{N}(v_1,v_2,v_3,v_4,y_i)$. By Lemma \ref{lem:N(S)_2} and $\delta(G)>\frac{4n}{5}$, $\widehat{N}(v_1,v_2,v_3,v_4)>0$ and $\widehat{N}(v_1,v_2,v_3,v_4,y_i)>0$. It follows from Lemma \ref{lem:decrease_N(K)} that $\widehat{N}(v_1,v_2,v_3,v_4)\leq \widehat{N}(v_1,v_2,v_3)$ and $\widehat{N}(v_1,v_2,v_3,v_4,y_i)\leq\widehat{N}(v_1,v_2,v_3,y_i)$.
Thus $1\leq R_0\leq n\cdot g_0$ and $1\leq R_i\leq n\cdot p_{i,0}$.

Let $$W^{(1)}_G=W'_G(O).$$ Here is a programming:
%\begin{align*}
%W^{(1)}_G
%=\frac{e_0g_0}{n}\sum_{i=1}^{R_0}[&\frac{1}{p_{i,0}}(\frac{2}{e_iq_{i,0}}-\frac{1}{e_0g_0}-\frac{1}{e_0q_{i,0}})+
%\sum_{j=1}^{R_i}\frac{1}{n\cdot h_{i,j}}(\frac{2}{e_iq_{i,0}p_{i,0}}+\frac{2}{e_iq_{i,0}p_{i,j}}+\\
%&\frac{2}{e_iq_{i,j}p_{i,j}}-\frac{1}{e_0g_0p_{i,0}}-\frac{1}{e_0q_{i,0}p_{i,0}}-\frac{1}{e_0q_{i,0}p_{i,j}}
%-\frac{3}{f_{i,j}q_{i,j}p_{i,j}})].
%\end{align*}
%To prove $W'_G(O)\leq 1$, it suffices to prove that the following programming has value at most $1$.

% 在导言区添加: \usepackage{amsmath} 和 \documentclass[fleqn]{article}
\begin{align*}
&\text{(P1): maximize } W^{(1)}_G & \\
& \quad\text{s.t. for all } 1\leq i \leq R_0 \text{ and } 1\leq j \leq R_i & \\
& \qquad\text{I. Degree constraints:} & x \in [1 - d, 1], \\
                                    & & y'_i \in [1 - d, 1], \\
& \qquad\text{II. Triangle constraints:} & e_0 \in [x - d, x], \\
                                       & & e_i \in [x + y'_i - 1, x], \\
                                       & & f_{i,j} \in [y'_i - d, y'_i], \\
& \qquad\text{III. } K_4 \text{ constraints:} &g_0\in[e_0-d,e_0],\\
                                            & & q_{i,0} \in [e_i + e_0 - x, e_0], \\
                                            & & q_{i,j} \in [e_i + f_{i,j} - y'_i, e_i], \\
& \qquad\text{IV. } K_5 \text{ constraints:} & p_{i,0}\in[g_0+e_i-x,g_0],\\
                                           & & p_{i,j} \in [q_{i,0} + f_{i,j} - y'_i, q_{i,0}], \\
& \qquad\text{V. } K_6 \text{ constraints:} & h_{i,j}\in[p_{i,0}+q_{i,j}-e_{i},p_{i,0}],\\
& \qquad\text{VI. Number of terms constraints:} & R_0 \in [0, n\cdot g_0], \\
                                              & & R_i \in [0, n\cdot p_{i,0}].
\end{align*}

The following lemma ensures that $ W^{(1)}_G$ is well-defined in the domain of (P1).
\begin{Lemma}\label{lem:var>0_1}
Let $G$ be a graph on $n$ vertices with minimum degree $\delta(G)>\frac{4n}{5}$ and $d=1-\frac{\delta(g)}{n}$.
Then
\begin{enumerate}
\item[$(1)$] $1 - d>0$, $x - d>0$, $e_0-d>0$,
\item[$(2)$] $y'_i - d>0$, $x + y'_i - 1>0$, $e_i + e_0 - x>0$, $g_0+e_i-x>0$,
\item[$(3)$] $e_i + f_{i,j} - y'_i>0$, $q_{i,0} + f_{i,j} - y'_i>0$ and $p_{i,0}+q_{i,j}-e_{i}>0$.
\end{enumerate}
\end{Lemma}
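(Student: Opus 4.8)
The plan is to recognize each expression in the statement as the left endpoint of one of the interval constraints of (P1) and to bound it strictly above $0$ by substituting the lower-endpoint constraints of the variables it depends on, using only the numerical fact $d=1-\frac{\delta(G)}{n}<\frac{1}{5}$. Since $W^{(1)}_G=W'_G(O)$ is a sum of products of reciprocals of the variables $x,y'_i,e_0,e_i,f_{i,j},g_0,q_{i,0},q_{i,j},p_{i,0},p_{i,j},h_{i,j}$, once these lower endpoints are known to be positive the constraints of (P1) force every variable to be positive, so $W^{(1)}_G$ is well-defined on the whole feasible region.

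Concretely I would go through the constraint blocks in order. Block I gives $1-d>\frac{4}{5}$; since $x,y'_i\geq 1-d$, the block-II lower endpoints satisfy $x-d\geq 1-2d>\frac{3}{5}$, $y'_i-d\geq 1-2d>\frac{3}{5}$ and $x+y'_i-1\geq 1-2d>\frac{3}{5}$. Each remaining item is a three-term difference, which I would expand level by level: $e_0-d\geq(x-d)-d\geq 1-3d$; $e_i+e_0-x\geq(x+y'_i-1)+(x-d)-x=x+y'_i-1-d\geq 1-3d$; $e_i+f_{i,j}-y'_i\geq(x+y'_i-1)+(y'_i-d)-y'_i\geq 1-3d$; $g_0+e_i-x\geq(e_0-d)+(x+y'_i-1)-x\geq x+y'_i-1-2d\geq 1-4d$; $q_{i,0}+f_{i,j}-y'_i\geq(e_i+e_0-x)+(f_{i,j}-y'_i)\geq x+y'_i-1-2d\geq 1-4d$; and the tightest one, $p_{i,0}+q_{i,j}-e_i\geq(g_0+e_i-x)+(e_i+f_{i,j}-y'_i)-e_i\geq x+y'_i-1-3d\geq 1-5d>0$, where strictness uses $d<\frac{1}{5}$ with no slack. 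All of (1)--(3) follow.

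An alternative route, closer in spirit to the previous subsection, is to observe that every three-term difference above has the form $\widehat{N}(A)+\widehat{N}(B)-\widehat{N}(A\cap B)$ for nonempty $A,B\subseteq\{x_1,x_2,x_3,x_4,y_i,z_{i,j}\}$ --- for instance $e_i+e_0-x=\widehat{N}(x_1,y_i)+\widehat{N}(x_1,x_2)-\widehat{N}(x_1)$ and $p_{i,0}+q_{i,j}-e_i=\widehat{N}(x_1,x_2,x_3,y_i)+\widehat{N}(x_1,y_i,z_{i,j})-\widehat{N}(x_1,y_i)$ --- while the ``$-d$'' items come from Lemma~\ref{lem:N(S)_2} and $d<\frac{1}{5}$. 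Because $y_i,z_{i,j}\in R=N(x_1,x_2,x_3,x_4)$, the six vertices $x_1,x_2,x_3,x_4,y_i,z_{i,j}$ are pairwise distinct, so each union $A\cup B$ has at most $5$ vertices and Lemma~\ref{lem:N(S)_3} applies verbatim. I do not expect a genuine obstacle here; the only care needed is in tracking which constraints feed each bound --- the worst case $1-5d$ explains why $\delta(G)>\frac{4}{5}n$ is precisely the hypothesis used in this lemma --- and, on the second route, in checking the distinctness of those six vertices so that the size-at-most-$5$ hypothesis of Lemma~\ref{lem:N(S)_3} is met.
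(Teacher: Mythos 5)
Your proposal is correct, and your second (``alternative'') route is exactly the paper's proof: the three-term differences in items (2) and (3) are recognized as expressions of the form $\widehat{N}(A)+\widehat{N}(B)-\widehat{N}(A\cap B)$ with $A,B$ nonempty and $|A\cup B|\leq 5$ (which works because $x_4$ never occurs in the defining sets --- every variable is a common-neighbourhood density of a subset of the five-element set $\{x_1,x_2,x_3,y_i,z_{i,j}\}$), and Lemma~\ref{lem:N(S)_3} is invoked directly, while the $-d$ items in (1) and $y'_i-d$ come from $d<\frac{1}{5}$ together with $x,y'_i>\frac{4}{5}$. Your first route is a legitimate, more elementary alternative: it unrolls the proof of Lemma~\ref{lem:N(S)_3} by hand, chaining the lower endpoints of the (P1) interval constraints until everything is bounded below by $1-5d>0$. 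This costs a few more lines but has two small benefits: it makes completely explicit that $1-5d$ is the binding quantity (hence exactly why the threshold $\delta(G)>\frac{4}{5}n$ is the right hypothesis for this lemma), and it works purely at the level of the (P1) constraints without re-entering the density lemmas of Section~3.1. The paper opts for the shorter invocation of Lemma~\ref{lem:N(S)_3}; both are sound.
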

\begin{proof}
By $\delta(G)>\frac{4n}{5}$ and $d=1-\frac{\delta(G)}{n}$, $x>\frac{4}{5}$ and $d<\frac{1}{5}$. Then $1 - d>0$ and $x - d>0$. Since $e_0 \in [x - d, x]$, $e_0-d\geq x-2d>0$. By the definition of $y'$, $y'>\frac{4}{5}$, and so $y'_i - d>0$. It follows from Lemma \ref{lem:N(S)_3} that $x + y'_i - 1>0$, $e_i + e_0 - x>0$, $g_0+e_i-x>0$, $e_i + f_{i,j} - y'_i>0$, $q_{i,0} + f_{i,j} - y'_i>0$ and $p_{i,0}+q_{i,j}-e_{i}>0$.
%$x + y'_i - 1=\widehat{N}(x_1)+\widehat{N}(y_i)-\widehat{N}(\emptyset)>0$, $e_i + e_0 - x=\widehat{N}(x_1,y_i)+\widehat{N}(x_1,x_2)-\widehat{N}(x_1)>0$, $e_i + f_{i,j} - y'_i=\widehat{N}(x_1,y_i)+\widehat{N}(y_i,z_{i,j})-\widehat{N}(y_i)>0$, $g_0+e_i-x=\widehat{N}(x_1,x_2,x_3)+\widehat{N}(x_1,y_i)-\widehat{N}(x_1)>0$, $q_{i,0} + f_{i,j} - y'_i=\widehat{N}(x_1,x_2,y_i)+\widehat{N}(y_i,z_{i,j})-\widehat{N}(y_i)>0$ and $p_{i,0}+q_{i,j}-e_{i}=\widehat{N}(x_1,x_2,x_3,y_i)+\widehat{N}(x_1,x_2,y_i,z_{i,j})-\widehat{N}(x_1,y_i)>0$.
\end{proof}

To prove Theorem \ref{thm:W'_G(O)<=1}, i.e., $W'_G(O)\leq 1$, it suffices to prove that the programming {\rm (P1)} has value at most $1$. By the above analysis, we obtain the following lemma.
\begin{Lemma}\label{lem:program->thm2.5}
If the maximum value of {\rm (P1)} is at most $1$, then $W'_G(O)\leq 1$ for any $O\in\mathcal {OK}_4(G)$.
\end{Lemma}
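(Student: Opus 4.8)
The plan is to show that for each fixed $O=(x_1,x_2,x_3,x_4)\in\mathcal{OK}_4(G)$ the real number $W'_G(O)$ is exactly the value that the objective $W^{(1)}_G$ of (P1) attains at one distinguished point of its feasible region: the point obtained by letting every variable equal the common neighbor density after which it was named. Granting this, the hypothesis that (P1) has maximum value at most $1$ immediately forces $W'_G(O)\le 1$ for that $O$, which is the assertion, since $O$ was arbitrary.

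Concretely, I would enumerate $R=N(x_1,x_2,x_3,x_4)=\{y_1,\dots,y_{R_0}\}$ and $N(y_i)\cap R=\{z_{i,1},\dots,z_{i,R_i}\}$ as in the paragraph preceding (P1), and set $x=\widehat N(x_1)$, $y'_i=\widehat N(y_i)$, $e_0=\widehat N(x_1,x_2)$, $e_i=\widehat N(x_1,y_i)$, $f_{i,j}=\widehat N(y_i,z_{i,j})$, $g_0=\widehat N(x_1,x_2,x_3)$, $q_{i,0}=\widehat N(x_1,x_2,y_i)$, $q_{i,j}=\widehat N(x_1,y_i,z_{i,j})$, $p_{i,0}=\widehat N(x_1,x_2,x_3,y_i)$, $p_{i,j}=\widehat N(x_1,x_2,y_i,z_{i,j})$, $h_{i,j}=\widehat N(x_1,x_2,x_3,y_i,z_{i,j})$, together with the integers $R_0=|R|$ and $R_i=|N(y_i)\cap R|$. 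At this point the identity $W^{(1)}_G=W'_G(O)$ needs no fresh computation: it is precisely the chain of rewritings already performed in Section~\ref{sec:main_program} — passing from the formula for $W'_G(O)$ in Lemma~\ref{lem:W'_G(O)->W_G(O)}(2) to \eqref{eq:W'_G(O)} via $\widehat W(K)=n^{r-1}W(K)$, and then substituting the variable names — so I would simply cite that derivation. Well-definedness of $W^{(1)}_G$ at this point (no vanishing denominators) is guaranteed by Lemma~\ref{lem:var>0_1}.

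The only substantive work is to verify that this point satisfies constraints I--VI of (P1). Each upper bound is one application of monotonicity of $\widehat N$ under enlarging the conditioning set (Lemma~\ref{lem:decrease_N(K)}); for instance $e_0=\widehat N(x_1,x_2)\le\widehat N(x_1)=x$, $q_{i,0}\le\widehat N(x_1,x_2)=e_0$, $h_{i,j}\le\widehat N(x_1,x_2,x_3,y_i)=p_{i,0}$, and $R_0=n\widehat N(x_1,x_2,x_3,x_4)\le n\widehat N(x_1,x_2,x_3)=n g_0$, $R_i\le n p_{i,0}$. Each lower bound is one application of $\widehat N(A\cup B)\ge\widehat N(A)+\widehat N(B)-\widehat N(A\cap B)$ (Lemma~\ref{lem:N(S)_1}) to a well-chosen pair $(A,B)$ whose union is the target set, combined with $\widehat N(v)\ge\delta(G)/n=1-d$ for a single vertex $v$; e.g. for $q_{i,0}$ take $A=\{x_1,x_2\}$, $B=\{x_1,y_i\}$ (so $A\cap B=\{x_1\}$), giving $q_{i,0}\ge e_0+e_i-x$; for $p_{i,0}$ take $A=\{x_1,x_2,x_3\}$, $B=\{x_1,y_i\}$, giving $p_{i,0}\ge g_0+e_i-x$; for $h_{i,j}$ take $A=\{x_1,x_2,x_3,y_i\}$, $B=\{x_1,y_i,z_{i,j}\}$ (so $A\cap B=\{x_1,y_i\}$), giving $h_{i,j}\ge p_{i,0}+q_{i,j}-e_i$; and similarly for $e_0,e_i,f_{i,j},g_0,q_{i,j},p_{i,j}$, with the degree constraint $x,y'_i\in[1-d,1]$ immediate. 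The main (and only) thing requiring care is matching each lower-bound constraint with the correct pair $(A,B)$, which is bookkeeping rather than a genuine obstacle. Once feasibility is checked, the point lies in the domain of (P1) and $W^{(1)}_G$ there equals $W'_G(O)$, so $W'_G(O)$ does not exceed the maximum of (P1), i.e. $W'_G(O)\le 1$.
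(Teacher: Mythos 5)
Your proposal is correct and matches the paper's (implicit) argument: the paper states the lemma without a separate proof because the paragraphs preceding (P1) already verify, via Lemmas~\ref{lem:decrease_N(K)} and~\ref{lem:N(S)_1}, that the point determined by $O$ satisfies every constraint, and $W^{(1)}_G$ is defined to equal $W'_G(O)$ at that point. You have simply spelled out that feasibility check in detail, including the correct choice of the pair $(A,B)$ for each lower bound.
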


To reduce the number of variables, we need the following Weierstrass' Theorem (see \cite{Chachuat}).
\begin{Theorem}[Weierstrass' Theorem]\label{thm:Weierstrass_thm}
Let $S$ be a nonempty, closed and bounded set. If $f:S\rightarrow\mathbb{R}$ is continuous on $S$, then the problem $\max\{f(x):x\in S\}$ attains its global maximum, that is, there exists a maximizing solution to this problem.
\end{Theorem}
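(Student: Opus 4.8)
The plan is to reduce the claim to the compactness of $S$ together with the sequential characterization of continuity; throughout, $S$ is understood to be a subset of some Euclidean space $\mathbb{R}^m$ (the relevant setting for the finitely-many-variable programs to come), with ``closed'' and ``bounded'' taken in the usual sense. First I would show that $f$ is bounded above on $S$: if not, pick a sequence $(x_k)_{k\geq 1}$ in $S$ with $f(x_k)\to+\infty$; since $S$ is bounded, the Bolzano--Weierstrass theorem yields a subsequence $x_{k_j}\to x^*$, and closedness of $S$ forces $x^*\in S$; continuity of $f$ at $x^*$ then gives $f(x_{k_j})\to f(x^*)\in\mathbb{R}$, contradicting $f(x_{k_j})\to+\infty$. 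Since $S\neq\emptyset$, it follows that $M:=\sup_{x\in S}f(x)$ is a well-defined finite real number.

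Next I would exhibit a maximizer. By the definition of the supremum there is a sequence $(y_k)$ in $S$ with $f(y_k)\to M$. Boundedness of $S$ and Bolzano--Weierstrass produce a subsequence $y_{k_j}\to x^{**}$, and closedness of $S$ gives $x^{**}\in S$. Continuity of $f$ then yields $f(x^{**})=\lim_j f(y_{k_j})=M$; since $M$ is an upper bound for $f$ on $S$, the point $x^{**}$ satisfies $f(x^{**})\geq f(x)$ for all $x\in S$, i.e. $x^{**}$ is a global maximizing solution, which is exactly the assertion. Equivalently, one can compress the whole argument into one line: $f(S)$ is the continuous image of a compact set, hence a nonempty compact subset of $\mathbb{R}$, hence contains its supremum.

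The argument is entirely standard, so I do not expect a genuine obstacle; the one point that needs care is the ambient space. The Heine--Borel equivalence ``closed and bounded $\iff$ compact'' and the Bolzano--Weierstrass extraction both rely on $S\subseteq\mathbb{R}^m$ --- in a general metric space a closed and bounded set need not be compact and the conclusion can fail. This costs nothing in our application, since every feasible region used later (for {\rm (P1)} and its successors) is cut out by finitely many closed inequality constraints in finitely many real variables and is bounded, hence is a nonempty closed bounded subset of a Euclidean space to which the theorem applies.
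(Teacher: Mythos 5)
Your proof is correct, and it is the standard one; the only thing worth noting is that the paper does not actually prove this theorem at all, but simply cites it as a known result from Chachuat's lecture notes (the extreme value / Weierstrass theorem for nonlinear optimization). Your two-step argument — first boundedness of $f$ via Bolzano--Weierstrass and a contradiction, then extraction of a maximizer from a maximizing sequence — is the textbook proof, and your one-line compression (continuous image of a compact set is compact, hence attains its supremum) is a correct alternative phrasing. You are also right to flag the ambient-space caveat: the statement as written says only ``set,'' and the argument genuinely requires a setting where closed and bounded implies sequentially compact (e.g.\ a finite-dimensional Euclidean space), which is indeed the case for every program (P1)--(P12) in the paper since each feasible region is a bounded subset of $\mathbb{R}^m$ cut out by finitely many closed inequalities. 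So there is no gap; you have supplied a proof where the paper chose to cite rather than prove.
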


\begin{Lemma}\label{lem:(P1)}
The maximum value of {\rm (P1)} is achieved when for all $1\leq i\leq R_0$ and $1\leq j,j'\leq R_i$, we have
$$f_{i,j}=f_{i,j'},\text{    }q_{i,j}=q_{i,j'},\text{    }p_{i,j}=p_{i,j'}\text{  and    }h_{i,j}=h_{i,j'}.$$
\end{Lemma}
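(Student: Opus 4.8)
\textbf{Proof plan for Lemma \ref{lem:(P1)}.}

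The plan is to fix an index $i$ and show that, among all feasible points of (P1), the maximum is attained at one where the variables indexed by $(i,j)$ are ``balanced'' over $1\le j\le R_i$. The key structural observation is that in the expression
\[
W'_G(O)=\frac{e_0g_0}{n}\sum_{i=1}^{R_0}\Big[\tfrac{1}{p_{i,0}}\big(\tfrac{2}{e_iq_{i,0}}-\tfrac{1}{e_0g_0}-\tfrac{1}{e_0q_{i,0}}\big)+\sum_{j=1}^{R_i}\tfrac{1}{n\,h_{i,j}}\,\Phi_{i,j}\Big],
\]
the only place where the ``inner'' variables $f_{i,j},q_{i,j},p_{i,j},h_{i,j}$ appear is inside the single summand indexed by $(i,j)$: there is no cross-coupling between different values of $j$ in the objective, and the constraint box for $(f_{i,j},q_{i,j},p_{i,j},h_{i,j})$ depends only on the ``outer'' variables $y'_i,e_i,f_{i,j},q_{i,0},q_{i,j},p_{i,0}$ and not on $(i,j')$ for $j'\ne j$. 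Consequently, after one applies Weierstrass' Theorem (Theorem \ref{thm:Weierstrass_thm}) to guarantee a global maximizer exists on the compact feasible region, one can argue as follows: at a maximizer, for each fixed $i$ the inner blocks indexed by $j=1,\dots,R_i$ all solve the \emph{same} optimization subproblem, namely ``maximize $\tfrac{1}{h}\Phi(f,q,p,h)$ over the box determined by the common outer data $y'_i,e_i,q_{i,0},p_{i,0}$''.

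First I would write out this inner subproblem explicitly: with the outer variables frozen, each term $\tfrac1{n\,h_{i,j}}\Phi_{i,j}$ is a function of the four variables $(f_{i,j},q_{i,j},p_{i,j},h_{i,j})$ ranging over the nested intervals
\[
f\in[y'_i-d,\,y'_i],\quad q\in[e_i+f-y'_i,\,e_i],\quad p\in[q_{i,0}+f-y'_i,\,q_{i,0}],\quad h\in[p_{i,0}+q-e_i,\,p_{i,0}],
\]
a region that depends only on the frozen data $(y'_i,d,e_i,q_{i,0},p_{i,0})$, identically for every $j$. Then I would invoke Weierstrass again on this (nonempty, closed, bounded) set to obtain a maximizing tuple $(f^*,q^*,p^*,h^*)$; by Lemma \ref{lem:var>0_1} all denominators are strictly positive there, so the objective is continuous and the maximum is finite. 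The crucial step is then the \emph{replacement argument}: starting from a global maximizer of (P1), replace every inner block $(f_{i,j},q_{i,j},p_{i,j},h_{i,j})$, $j=1,\dots,R_i$, by the common optimal tuple $(f^*,q^*,p^*,h^*)$ for that $i$. This keeps the point feasible (the boxes for distinct $j$ are independent and identical), does not change any outer variable or any term with a different index $i$, and can only increase (weakly) each of the $R_i$ inner summands, hence weakly increases $W^{(1)}_G$. So the modified point is still a maximizer and it has the desired property $f_{i,j}=f_{i,j'}$, $q_{i,j}=q_{i,j'}$, $p_{i,j}=p_{i,j'}$, $h_{i,j}=h_{i,j'}$ for all $i$ and all $j,j'$.

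The step I expect to be the main obstacle, or at least the one requiring the most care, is the bookkeeping showing that replacing the inner blocks genuinely preserves feasibility: one must check that none of the outer-variable constraints (the intervals for $q_{i,0}$ and $p_{i,0}$, and the ``number of terms'' constraints $R_i\le n\cdot p_{i,0}$) are disturbed — they are not, since $q_{i,0},p_{i,0},R_i$ do not depend on the inner $j$-indexed variables — and that the $j$-indexed constraints are exactly a product of $R_i$ identical boxes, so choosing the same optimal tuple in each factor is legitimate. A secondary point worth stating carefully is that we are not claiming the maximizer is unique, only that \emph{some} maximizer has the stated symmetry; the phrase ``is achieved when'' in the statement should be read in this existential sense, which is all that is needed to reduce the number of variables in the subsequent analysis. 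Once these two points are addressed, the lemma follows immediately from the two applications of Theorem \ref{thm:Weierstrass_thm} and the monotone replacement.
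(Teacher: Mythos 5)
Your proof is correct and takes essentially the same replacement argument as the paper's proof: identify the separability across $j$-indices, invoke Weierstrass' Theorem for the existence of a global maximizer, and replace every inner block by a single best one without disturbing feasibility. The only difference is cosmetic: you invoke Weierstrass a second time to obtain a fresh global maximizer $(f^*,q^*,p^*,h^*)$ of the inner subproblem over the continuous feasible region, whereas the paper simply selects, among the $R_i$ blocks already present at a maximizer $P_0$, the index $j_i$ whose summand is largest and copies that discrete block to the others — avoiding the second Weierstrass step but otherwise identical in spirit.
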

\begin{proof}
%By Lemma \ref{lem:var>0_1}, $W_G^{(1)}$ is well-defined on the domain of (P1).
Note that $W_G^{(1)}$ is continuous on the domain of (P1).
Since the domain of (P1) is closed and bounded, we find that (P1) has a global maximum by Theorem \ref{thm:Weierstrass_thm}.

Let $P_0$ be a point that achieves this maximum. For the above $P_0$ and each $i$, let $1\leq j_i\leq R_i$ such that
$$\frac{1}{h_{i,j_i}}(\frac{2}{e_iq_{i,0}p_{i,0}}+\frac{2}{e_iq_{i,0}p_{i,j_i}}+
\frac{2}{e_iq_{i,j_i}p_{i,j_i}}-\frac{1}{e_0g_0p_{i,0}}-\frac{1}{e_0q_{i,0}p_{i,0}}-\frac{1}{e_0q_{i,0}p_{i,j_i}}
-\frac{3}{f_{i,j_i}q_{i,j_i}p_{i,j_i}})$$
is maximized over all $1\leq j\leq R_i$. Then the $P'_0$ obtained from $P_0$ by setting $f_{i,j}=f_{i,j_i}$, $q_{i,j}=q_{i,j_i}$, $p_{i,j}=p_{i,j_i}$ and $h_{i,j}=h_{i,j_i}$ for all $1\leq i\leq R_0$ and $1\leq j\leq R_i$ is also a point that achieves this maximum. Moreover, since the constraints for the $f_{i,j}$, $q_{i,j}$, $p_{i,j}$ and $h_{i,j}$ are identical for each $1\leq j\leq R_i$, $P'_0$ also satisfies the constraints of (P1) as desired.
\end{proof}

By Lemma \ref{lem:(P1)}, let $$f_i=f_{i,1}, q_i=q_{i,1}, p_i=p_{i,1} \ \text{and}\  h_i=h_{i,1}$$ without loss of generality. Let $r_i=\frac{R_i}{n}$ and we form a new programming {\rm (P2)} with a new objective function that has the same optimum value as {\rm (P1)}:
\begin{align*}
W^{(2)}_G
=\frac{e_0g_0}{n}\sum_{i=1}^{R_0}[&\frac{1}{p_{i,0}}(\frac{2}{e_iq_{i,0}}-\frac{1}{e_0g_0}-\frac{1}{e_0q_{i,0}})+
\frac{r_i}{h_{i}}(\frac{2}{e_iq_{i,0}p_{i,0}}+\frac{2}{e_iq_{i,0}p_{i}}+\\
&\frac{2}{e_iq_{i}p_{i}}-\frac{1}{e_0g_0p_{i,0}}-\frac{1}{e_0q_{i,0}p_{i,0}}-\frac{1}{e_0q_{i,0}p_{i}}
-\frac{3}{f_{i}q_{i}p_{i}})].
\end{align*}
The new programming is as follows:
\begin{align*}
&\text{(P2): maximize } W^{(2)}_G \\
& \quad\text{s.t. for all } 1\leq i \leq R_0 \\
& \qquad\text{I. Degree constraints:} & x \in [1 - d, 1], \\
                                    & & y'_i \in [1 - d, 1], \\
& \qquad\text{II. Triangle constraints:} & e_0 \in [x - d, x], \\
                                       & & e_i \in [x + y'_i - 1, x], \\
                                       & & f_{i} \in [y'_i - d, y'_i], \\
& \qquad\text{III. } K_4 \text{ constraints:} &g_0\in[e_0-d,e_0],\\
                                            & & q_{i,0} \in [e_i + e_0 - x, e_0], \\
                                            & & q_{i} \in [e_i + f_{i} - y'_i, e_i], \\
& \qquad\text{IV. } K_5 \text{ constraints:} & p_{i,0}\in[g_0+e_i-x,g_0],\\
                                           & & p_{i} \in [q_{i,0} + f_{i} - y'_i, q_{i,0}], \\
& \qquad\text{V. } K_6 \text{ constraints:} & h_{i}\in[p_{i,0}+q_{i}-e_{i},p_{i,0}],\\
& \qquad\text{VI. Number of terms constraints:} & R_0 \in [0, n\cdot g_0], \\
                                              & & r_i \in [0, p_{i,0}].
\end{align*}
By Lemma \ref{lem:var>0_1} and the definition of $f_i$, $q_i$, $p_i$ and $h_i$, $W^{(2)}_G$ is well-defined in the domain of (P2).
%It is clear that $W^{(2)}_G$ is well-defined in the domain of (P2).
\begin{Corollary}\label{cor:P1=P2}
{\rm (P1)} and {\rm (P2)} have the same global maximum. That is, {\rm OPT(P1)=OPT(P2)}.
\end{Corollary}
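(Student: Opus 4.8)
The plan is to establish the two inequalities $\mathrm{OPT}(\mathrm{P1})\leq\mathrm{OPT}(\mathrm{P2})$ and $\mathrm{OPT}(\mathrm{P2})\leq\mathrm{OPT}(\mathrm{P1})$ separately. In each direction I would exhibit an explicit correspondence between feasible points of the two programs that preserves both feasibility and the value of the objective, so that the two optima must coincide. The algebraic engine is the following observation, already implicit in the passage from $W^{(1)}_G$ to $W^{(2)}_G$: in the expression for $W'_G(O)$ the summand indexed by $i$ splits as $\frac{1}{p_{i,0}}\bigl(\frac{2}{e_iq_{i,0}}-\frac{1}{e_0g_0}-\frac{1}{e_0q_{i,0}}\bigr)$, which involves no variable carrying a $j$-index, plus the inner sum $\sum_{j=1}^{R_i}\frac{1}{n\,h_{i,j}}(\cdots)$; when $f_{i,j}=f_i$, $q_{i,j}=q_i$, $p_{i,j}=p_i$ and $h_{i,j}=h_i$ for all $j$, this inner sum equals $\frac{R_i}{n\,h_i}(\cdots)=\frac{r_i}{h_i}(\cdots)$ with $r_i=R_i/n$, which is exactly the $i$-th summand of $W^{(2)}_G$. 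Thus, on points where the $j$-indexed variables do not depend on $j$, $W^{(1)}_G$ and $W^{(2)}_G$ agree term by term.

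For $\mathrm{OPT}(\mathrm{P2})\geq\mathrm{OPT}(\mathrm{P1})$, I would invoke Lemma \ref{lem:(P1)}: there is a maximizer $P_0'$ of (P1) at which $f_{i,j}=f_{i,1}$, $q_{i,j}=q_{i,1}$, $p_{i,j}=p_{i,1}$ and $h_{i,j}=h_{i,1}$ for all $i$ and $j$. From $P_0'$ build the point of (P2) that retains $x$, $y_i'$, $e_0$, $e_i$, $g_0$, $q_{i,0}$, $p_{i,0}$, $R_0$ and sets $f_i=f_{i,1}$, $q_i=q_{i,1}$, $p_i=p_{i,1}$, $h_i=h_{i,1}$ and $r_i=R_i/n$. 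The interval constraints of (P2) on $f_i,q_i,p_i,h_i$ are identical to those of (P1) on $f_{i,j},q_{i,j},p_{i,j},h_{i,j}$, and $R_i\in[0,n\cdot p_{i,0}]$ gives $r_i\in[0,p_{i,0}]$, so this point is feasible for (P2); by the collapsing identity above its objective value equals $W^{(1)}_G(P_0')=\mathrm{OPT}(\mathrm{P1})$. Hence $\mathrm{OPT}(\mathrm{P2})\geq\mathrm{OPT}(\mathrm{P1})$.

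For the reverse inequality, start from an arbitrary feasible point $P$ of (P2) and build a point $\widetilde P$ of (P1) by retaining $x$, $y_i'$, $e_0$, $e_i$, $g_0$, $q_{i,0}$, $p_{i,0}$, $R_0$, setting $R_i=n\cdot r_i$, and setting $f_{i,j}=f_i$, $q_{i,j}=q_i$, $p_{i,j}=p_i$, $h_{i,j}=h_i$ for every $1\leq j\leq R_i$. Since the $j$-indexed interval constraints of (P1) are exactly the constraints of (P2) on $f_i,q_i,p_i,h_i$, and $r_i\in[0,p_{i,0}]$ gives $R_i=n\cdot r_i\in[0,n\cdot p_{i,0}]$, the point $\widetilde P$ is feasible for (P1); the collapsing identity gives $W^{(1)}_G(\widetilde P)=W^{(2)}_G(P)$. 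Taking $P$ to be a maximizer of (P2) yields $\mathrm{OPT}(\mathrm{P1})\geq\mathrm{OPT}(\mathrm{P2})$, and combining with the previous paragraph finishes the proof. I expect no genuine difficulty here; the only point requiring care is the bookkeeping between the ``number of terms'' variable $R_i$ of (P1) and its rescaled version $r_i=R_i/n$ of (P2), together with the matching factors of $1/n$ inside the inner sums, all of which is routine once written out.
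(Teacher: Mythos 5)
Your proof is correct and takes essentially the same approach as the paper; the paper derives the corollary by citing Lemma~\ref{lem:(P1)} and then rewriting the collapsed inner sum as $r_i/h_i$ times the bracket, exactly the ``collapsing identity'' you isolate, and you merely spell out the two inequalities explicitly. One small point worth noting: in your reverse direction (building a point of (P1) from a maximizer of (P2)) you set $R_i = n\cdot r_i$, which need not be an integer, so $\sum_{j=1}^{R_i}$ in $W^{(1)}_G$ is not literally a finite sum — the paper itself writes $R_i\in[0,n\cdot p_{i,0}]$ as a real interval and so shares this looseness, and in any case only $\mathrm{OPT}(\mathrm{P1})\leq\mathrm{OPT}(\mathrm{P2})$ is used downstream, so the corollary's force is preserved either way.
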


\begin{Lemma}\label{lem:(P2)}
The maximum value of {\rm (P2)} is achieved when for all $1\leq i,i'\leq R_0$, we have
$$y'_i=y'_{i'},\text{    }e_i=e_{i'},\text{    }f_i=f_{i'},\text{    }q_{i,0}=q_{i',0},\text{    }q_i=q_{i'},\text{    }p_{i,0}=p_{i',0},\text{    }p_{i}=p_{i'},\text{    }h_{i}=h_{i'},\text{    }r_i=r_{i'}.$$
\end{Lemma}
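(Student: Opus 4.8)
The plan is to symmetrize over the index $i$, exactly in the spirit of the proof of Lemma~\ref{lem:(P1)}. Since $W^{(2)}_G$ is well-defined (by Lemma~\ref{lem:var>0_1}) and continuous on the domain of {\rm (P2)}, which is closed and bounded, Theorem~\ref{thm:Weierstrass_thm} will provide a global maximizer $P_0$ of {\rm (P2)}. By Lemma~\ref{lem:var>0_1} we have $e_0\geq x-d>0$ and $g_0\geq e_0-d>0$, so the prefactor $\frac{e_0g_0}{n}$ is strictly positive; if $R_0=0$ the objective is $0$ and the statement is vacuous, so we may assume $R_0\geq 1$.

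The key structural observation is that $W^{(2)}_G=\frac{e_0g_0}{n}\sum_{i=1}^{R_0}\Phi_i$, where $\Phi_i$ denotes the $i$-th bracketed summand in the displayed formula for $W^{(2)}_G$; crucially, $\Phi_i$ depends only on the ``global'' quantities $x,e_0,g_0$ and on the block of index-$i$ variables $y'_i,e_i,f_i,q_{i,0},q_i,p_{i,0},p_i,h_i,r_i$. At $P_0$, I would choose $i_0\in\{1,\dots,R_0\}$ with $\Phi_{i_0}=\max_{1\leq i\leq R_0}\Phi_i$, and let $P'_0$ be obtained from $P_0$ by overwriting, for every $i$, the block of index $i$ with the block of index $i_0$ (leaving $x,e_0,g_0,R_0$ unchanged). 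Then $W^{(2)}_G(P'_0)=\frac{e_0g_0}{n}R_0\Phi_{i_0}\geq\frac{e_0g_0}{n}\sum_{i=1}^{R_0}\Phi_i=W^{(2)}_G(P_0)$.

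It then remains to check that $P'_0$ is feasible for {\rm (P2)}; the only thing to verify is that every constraint carrying an index $i$ couples the index-$i$ variables solely to each other and to the global variables $x,e_0,g_0$, with formulas literally identical for every $i$. This is immediate by inspection: in constraints I--V, each interval bounding a block-$i$ variable has endpoints built only from $x,e_0,g_0$ and earlier block-$i$ variables, and in constraint VI one has $r_i\in[0,p_{i,0}]$. Hence, since the block of index $i_0$ satisfied all of its constraints at $P_0$, so does every block of $P'_0$, and $P'_0$ is feasible. Being feasible and attaining value at least the maximum, $P'_0$ is itself a maximizer, and at $P'_0$ we have $y'_i=y'_{i'}$, $e_i=e_{i'}$, $f_i=f_{i'}$, $q_{i,0}=q_{i',0}$, $q_i=q_{i'}$, $p_{i,0}=p_{i',0}$, $p_i=p_{i'}$, $h_i=h_{i'}$ and $r_i=r_{i'}$ for all $1\leq i,i'\leq R_0$, as desired. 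The single point requiring care is precisely this block-decoupling of the objective and of the constraints; once it is granted, the argument is a verbatim analogue of the proof of Lemma~\ref{lem:(P1)}, so I do not expect a genuine obstacle here.
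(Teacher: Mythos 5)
Your proposal is correct and takes essentially the same route as the paper: invoke Weierstrass to get a maximizer, pick the index $I$ whose summand $\Phi_I$ is largest, overwrite every block with block $I$, and note that feasibility is preserved because the constraints on each block are identical and couple only to the global variables $x,e_0,g_0$. Your extra remarks (positivity of the prefactor, the trivial $R_0=0$ case) are harmless additions but not needed beyond the paper's argument.
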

\begin{proof}
By the similar proof of Lemma \ref{lem:(P1)}, (P2) has a global maximum by Theorem \ref{thm:Weierstrass_thm}.
Let $P_0$ be a point in the domain of (P2) that achieves the maximum of (P2). For the above point $P_0$, let $1\leq I\leq R_0$ be an integer such that
$$\frac{1}{p_{i,0}}(\frac{2}{e_iq_{i,0}}-\frac{1}{e_0g_0}-\frac{1}{e_0q_{i,0}})+
\frac{r_i}{h_{i}}(\frac{2}{e_iq_{i,0}p_{i,0}}+\frac{2}{e_iq_{i,0}p_{i}}+
\frac{2}{e_iq_{i}p_{i}}-\frac{1}{e_0g_0p_{i,0}}-\frac{1}{e_0q_{i,0}p_{i,0}}-\frac{1}{e_0q_{i,0}p_{i}}
-\frac{3}{f_{i}q_{i}p_{i}})$$
is maximized over all $1\leq i\leq R_0$. Then the point $P'_0$ obtained from $P_0$ by setting $y'_i=y'_{I},$
$e_i=e_{I},f_i=f_{I},q_{i,0}=q_{I,0},q_i=q_{I},p_{i,0}=p_{I,0},p_{i}=p_{I},h_{i}=h_{I}$ and $r_i=r_{I}$ for all $1\leq i\leq R_0$ is also a point that achieves this maximum. Moreover, since the constraints for the $y'_i,e_i,f_i,q_{i,0},q_i,p_{i,0},p_{i},h_{i}$ and $r_i$ are identical for any $1\leq i\leq R_0$, $P'_0$ satisfies the constraints of (P2) as desired.
\end{proof}

By Lemma \ref{lem:(P2)}, let $$y'=y'_{1},\ e=e_{1},\ f=f_{1},\ q_{0}=q_{1,0},\ q=q_{1},\ p_{0}=p_{1,0},\ p=p_{1},\ h=h_{1}\ \text{and}\ r=r_{1},$$ without loss of generality. Let $r_0=\frac{R_0}{n}$ and we form a new programming (P3) with a new objective function that has the same optimum value as (P2) and hence as (P1):
\begin{align*}
W^{(3)}_G=e_0g_0r_0[&\frac{1}{p_{0}}(\frac{2}{eq_{0}}-\frac{1}{e_0g_0}-\frac{1}{e_0q_{0}})+
\frac{r}{h}(\frac{2}{eq_{0}p_{0}}+\frac{2}{eq_{0}p}+\frac{2}{eqp}-\frac{1}{e_0g_0p_{0}}-\frac{1}{e_0q_{0}p_{0}}\\
&-\frac{1}{e_0q_{0}p}-\frac{3}{fqp})].
\end{align*}
Here is the new programming:
\begin{align*}
&\text{(P3): maximize } W^{(3)}_G \\
& \quad\text{s.t.}\\
%& \quad\text{s.t. for all } 1\leq i \leq R_0 \\
& \qquad\text{I. Degree constraints:} & x \in [1 - d, 1], \\
                                    & & y' \in [1 - d, 1], \\
& \qquad\text{II. Triangle constraints:} & e_0 \in [x - d, x], \\
                                       & & e \in [x + y' - 1, x], \\
                                       & & f \in [y' - d, y'], \\
& \qquad\text{III. } K_4 \text{ constraints:} &g_0\in[e_0-d,e_0],\\
                                            & & q_{0} \in [e + e_0 - x, e_0], \\
                                            & & q \in [e + f - y', e], \\
& \qquad\text{IV. } K_5 \text{ constraints:} & p_{0}\in[g_0+e-x,g_0],\\
                                           & & p \in [q_{0} + f - y', q_0], \\
& \qquad\text{V. } K_6 \text{ constraints:} & h \in[p_{0}+q-e,p_{0}],\\
& \qquad\text{VI. Number of terms constraints:} & r_0 \in [0, g_0], \\
                                              & & r \in [0, p_{0}].
\end{align*}
Similarly, Lemma \ref{lem:var>0_1} ensures that $ W^{(3)}_G$ is well-defined in the domain of (P3).
\begin{Corollary}\label{cor:P1=P3}
{\rm (P1)} and {\rm (P3)} have the same global maximum. That is, {\rm OPT(P1)=OPT(P3)}.
\end{Corollary}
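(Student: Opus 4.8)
The plan is to obtain Corollary~\ref{cor:P1=P3} from Corollary~\ref{cor:P1=P2} together with Lemma~\ref{lem:(P2)}, in exactly the way (P2) was derived from (P1) via Lemma~\ref{lem:(P1)}. Since Corollary~\ref{cor:P1=P2} already gives ${\rm OPT(P1)}={\rm OPT(P2)}$, it suffices to prove ${\rm OPT(P2)}={\rm OPT(P3)}$.

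First I would set up the correspondence between feasible points of (P3) and the \emph{symmetric} feasible points of (P2), meaning those at which $y'_i,e_i,f_i,q_{i,0},q_i,p_{i,0},p_i,h_i,r_i$ are all independent of $i$. Given such a symmetric point of (P2), write $y',e,f,q_0,q,p_0,p,h,r$ for the common values and set $r_0=R_0/n$; conversely, from a feasible point of (P3) build a symmetric point of (P2) by copying the $i$-indexed values for all $1\leq i\leq R_0$, where $R_0=nr_0$. For each fixed $i$ the constraints of (P2) are literally the corresponding constraints of (P3), and the ``number of terms'' constraint $R_0\in[0,ng_0]$ translates into $r_0\in[0,g_0]$, so this is a genuine bijection. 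Moreover, on a symmetric point the sum $\sum_{i=1}^{R_0}$ in the definition of $W^{(2)}_G$ has $R_0$ identical summands, and collapsing it shows that $W^{(2)}_G$ equals $W^{(3)}_G$ at the matched point; thus the correspondence is objective-preserving.

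By Lemma~\ref{lem:(P2)} the maximum of (P2) is attained at some symmetric point, so ${\rm OPT(P2)}$ equals the maximum of $W^{(2)}_G$ over symmetric feasible points, which by the bijection of the previous paragraph is ${\rm OPT(P3)}$. Combined with Corollary~\ref{cor:P1=P2} this gives ${\rm OPT(P1)}={\rm OPT(P3)}$, as desired.

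Beyond the routine constraint-by-constraint matching, the only point requiring care is the relaxation already used when the inner count $R_i$ was replaced by the continuous variable $r_i$: the outer count $R_0$ must likewise be treated as a continuous quantity, so that $r_0=R_0/n$ ranges over all of $[0,g_0]$. This is precisely what makes the correspondence a bijection rather than a one-sided inclusion. I do not anticipate any genuine obstacle here; the argument is bookkeeping layered on top of Lemma~\ref{lem:(P2)}.
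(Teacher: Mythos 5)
Your proof is correct and follows exactly the route the paper intends: the paper's own proof of Corollary~\ref{cor:P1=P3} simply cites Lemma~\ref{lem:(P2)} and Corollary~\ref{cor:P1=P2}, leaving the symmetrization/collapse argument implicit. You have supplied precisely the bookkeeping (symmetric feasible points, the $r_0=R_0/n$ rescaling, and the objective-preserving correspondence) that the paper leaves to the reader.
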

\begin{proof}
By Lemma \ref{lem:(P2)} and Corollary \ref{cor:P1=P2}, we obtain the desired conclusion.
\end{proof}

%%%%%%%%%%%%%%%%%%%%%%%%%%%%%%%%%%%%%%%%%%%%%分章节%%%%%%%%%%%%%%%%%%%%%%%%%%%%%%%%%%%%%%%%

\section{Solving the programming}\label{sec:solve_program}

We do not actually solve (P3), rather in Subsection \ref{sec:7_variables}, we upper bound (P3) with a new program (P4) that uses ramp functions.

For a real-valued function $w(u)$ where $u\in\mathbb{R}^n$, define a {\em ramp function} $w^+(u)$ of $w(u)$ as following:
\[ w^+(u)=\frac{w(u)+|w(u)|}{2}= \left\{
\begin{array}{cl}
w(u), & \text{if } w(u)\geq0, \text{ and }\\
0, & \text{otherwise}.
\end{array} \right. \]
Note that $w^+(u)\geq w(u)$ and if $w$ is a real-valued function that is continuous on a region $\mathbb{R}$, then $w^+$ is continuous on $\mathbb{R}$.

We will slowly reduce the number of variables by constructing a new programming with a larger optimum value, first to nine (P5), then to seven (P6), six (P7), five (P8), four (P9), three (P10), two (P11), one (P12) and then we actually find the upper bound $W_G^{(13)}$ of the optimal value of (P12). Finally we prove $W_G^{(13)}\leq 1$ for $d\in[0,\frac{2}{33}]$ as required.

Suppose that $0\leq d\leq \frac{2}{33}$ in this section. So all programs in this section is well-defined. By Theorem \ref{thm:Weierstrass_thm}, each of them has a maximizing solution.

\subsection{Reduction to seven variables}\label{sec:7_variables}

We will modify the objective function $W^{(3)}_G$ by ramp functions. Recall that
\begin{align*}
W^{(3)}_G=e_0g_0r_0[&\frac{1}{p_{0}}(\frac{2}{eq_{0}}-\frac{1}{e_0g_0}-\frac{1}{e_0q_{0}})+
\frac{r}{h}(\frac{2}{eq_{0}p_{0}}+\frac{2}{eq_{0}p}+\frac{2}{eqp}-\frac{1}{e_0g_0p_{0}}-\frac{1}{e_0q_{0}p_{0}}\\
&-\frac{1}{e_0q_{0}p}-\frac{3}{fqp})]\\
=e_0g_0r_0[&(\frac{1}{p_0}+\frac{r}{hp_0})(\frac{2}{eq_0}-\frac{1}{e_0g_0}-\frac{1}{e_0q_0})+\frac{r}{h}(\frac{2}{eq_{0}p}+\frac{2}{eqp}
-\frac{1}{e_0q_{0}p}-\frac{3}{fqp})]\\
=e_0g_0r_0\{&(\frac{1}{p_0}+\frac{r}{hp_0})(\frac{2}{eq_0}-\frac{1}{e_0g_0}-\frac{1}{e_0q_0})+\frac{r}{hp}[\frac{1}{q_0}(\frac{1}{e}-\frac{1}{e_0})+
\frac{1}{q}(\frac{3}{e}-\frac{3}{f})+\text{  }\frac{1}{e}(\frac{1}{q_0}-\frac{1}{q})]\}\\
=e_0g_0r_0\{&(\frac{1}{p_0}+\frac{r}{hp_0})[(\frac{1}{q_0}+\frac{1}{g_0})(\frac{1}{e}-\frac{1}{e_0})+\frac{1}{e}(\frac{1}{q_0}-\frac{1}{g_0})]+\frac{r}{hp}[\frac{1}{q_0}(\frac{1}{e}-\frac{1}{e_0})+
\frac{1}{q}(\frac{3}{e}-\frac{3}{f})+\\
&\text{  }\frac{1}{e}(\frac{1}{q_0}-\frac{1}{q})]\}.
\end{align*}
Let
\begin{align*}
W^{(4)}_G
=e_0g_0r_0\{&(\frac{1}{p_0}+\frac{r}{hp_0})[(\frac{1}{q_0}+\frac{1}{g_0})\frac{(e_0-e)^+}{ee_0}+\frac{(g_0-q_0)^+}{eq_0g_0}]+\frac{r}{hp}[
\frac{(e_0-e)^+}{q_0ee_0}+\frac{3(f-e)^+}{qef}\\
&+\frac{(q-q_0)^+}{eq_0q}]\}.
\end{align*}
Then we obtain a new programming and a lemma as following:
\begin{align*}
&\text{(P4): maximize } W^{(4)}_G \\
& \quad\text{s.t.}\\
%& \quad\text{s.t. for all } 1\leq i \leq R_0 \\
& \qquad\text{I. Degree constraints:} & x \in [1 - d, 1], \\
                                    & & y' \in [1 - d, 1], \\
& \qquad\text{II. Triangle constraints:} & e_0 \in [x - d, x], \\
                                       & & e \in [x + y' - 1, x], \\
                                       & & f \in [y' - d, y'], \\
& \qquad\text{III. } K_4 \text{ constraints:} &g_0\in[e_0-d,e_0],\\
                                            & & q_{0} \in [e + e_0 - x, e_0], \\
                                            & & q \in [e + f - y', e], \\
& \qquad\text{IV. } K_5 \text{ constraints:} & p_{0}\in[g_0+e-x,g_0],\\
                                           & & p \in [q_{0} + f - y', q_0], \\
& \qquad\text{V. } K_6 \text{ constraints:} & h \in[p_{0}+q-e,p_{0}],\\
& \qquad\text{VI. Number of terms constraints:} & r_0 \in [0, g_0], \\
                                              & & r \in [0, p_{0}].
\end{align*}
\begin{Lemma}\label{lem:P4>=P3}
 {\rm OPT(P1)=OPT(P3)$\leq$ OPT(P4)}.
\end{Lemma}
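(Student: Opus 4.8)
The plan is to exploit the fact that (P3) and (P4) have \emph{identical} feasible regions: they involve the same variables $x,y',e_0,e,f,g_0,q_0,q,p_0,p,h,r_0,r$ and the same degree/triangle/$K_4$/$K_5$/$K_6$/number-of-terms constraints, and differ only in the objective. Consequently, to prove OPT(P3)$\leq$OPT(P4) it suffices to check that $W^{(3)}_G\leq W^{(4)}_G$ holds pointwise on this common domain; combining this with Corollary \ref{cor:P1=P3} (OPT(P1)=OPT(P3)) then gives the claimed chain OPT(P1)=OPT(P3)$\leq$OPT(P4).

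For the pointwise inequality I would start from the last of the displayed rewritings of $W^{(3)}_G$, namely
\[W^{(3)}_G=e_0g_0r_0\Big\{\Big(\tfrac1{p_0}+\tfrac r{hp_0}\Big)\Big[\Big(\tfrac1{q_0}+\tfrac1{g_0}\Big)\Big(\tfrac1e-\tfrac1{e_0}\Big)+\tfrac1e\Big(\tfrac1{q_0}-\tfrac1{g_0}\Big)\Big]+\tfrac r{hp}\Big[\tfrac1{q_0}\Big(\tfrac1e-\tfrac1{e_0}\Big)+\tfrac1q\Big(\tfrac3e-\tfrac3f\Big)+\tfrac1e\Big(\tfrac1{q_0}-\tfrac1q\Big)\Big]\Big\},\]
and compare it termwise with $W^{(4)}_G$. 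The two ingredients are: (i) on the domain of (P3) every one of $x,y',e_0,e,f,g_0,q_0,q,p_0,p,h$ is strictly positive, and $r_0,r\geq0$ — this is exactly the bookkeeping already used to see that $W^{(3)}_G$ is well defined, relying on the constraints together with Lemma \ref{lem:var>0_1} (e.g. $p_0\geq g_0+e-x>0$, $h\geq p_0+q-e>0$, $p\geq q_0+f-y'>0$); and (ii) the elementary fact that for positive $a,b$ we have $\tfrac1a-\tfrac1b=\tfrac{b-a}{ab}\leq\tfrac{(b-a)^+}{ab}$, since $w^+\geq w$ for every real $w$ and $ab>0$. Applying (ii) with $(a,b)=(e,e_0),(q_0,g_0),(q_0,q)$ and (scaled by $3$) with $(a,b)=(f,e)$, and using $\tfrac1{q_0}+\tfrac1{g_0}>0$, yields
\[\Big(\tfrac1{q_0}+\tfrac1{g_0}\Big)\tfrac{(e_0-e)^+}{ee_0}\geq\Big(\tfrac1{q_0}+\tfrac1{g_0}\Big)\Big(\tfrac1e-\tfrac1{e_0}\Big),\qquad \tfrac{(g_0-q_0)^+}{eq_0g_0}\geq\tfrac1e\Big(\tfrac1{q_0}-\tfrac1{g_0}\Big),\]
and likewise $\tfrac{(e_0-e)^+}{q_0ee_0}\geq\tfrac1{q_0}\big(\tfrac1e-\tfrac1{e_0}\big)$, $\tfrac{3(f-e)^+}{qef}\geq\tfrac1q\big(\tfrac3e-\tfrac3f\big)$, and $\tfrac{(q-q_0)^+}{eq_0q}\geq\tfrac1e\big(\tfrac1{q_0}-\tfrac1q\big)$, so each bracketed summand of $W^{(4)}_G$ dominates the corresponding one in $W^{(3)}_G$.

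Finally I would assemble these: the coefficients $\tfrac1{p_0}+\tfrac r{hp_0}$ and $\tfrac r{hp}$ are nonnegative (indeed $\tfrac1{p_0}>0$ and $\tfrac r{hp_0},\tfrac r{hp}\geq0$ by (i)), so multiplying the termwise inequalities by them and adding preserves $\geq$; multiplying through by the nonnegative outer factor $e_0g_0r_0$ then gives $W^{(3)}_G\leq W^{(4)}_G$ at every feasible point, hence OPT(P3)$\leq$OPT(P4), and with Corollary \ref{cor:P1=P3} the lemma follows. I do not expect a genuine obstacle here; the only thing that needs care is the bookkeeping — confirming that every denominator appearing really is positive on the domain of (P3) (so that both the inequality $\tfrac{(b-a)^+}{ab}\geq\tfrac1a-\tfrac1b$ and the sign of the coefficients are legitimate), and that the ``difference-of-reciprocals'' rewriting of $W^{(3)}_G$ matched against $W^{(4)}_G$ is the correct one — all of which is supplied by the constraints of (P3) and Lemma \ref{lem:var>0_1}, with no new idea required.
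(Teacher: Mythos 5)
Your proof is correct and takes essentially the same approach as the paper, which simply observes that $w^{+}\geq w$ for each ramped term and that the two programs share the same feasible region, then invokes Corollary \ref{cor:P1=P3}. You have merely filled in the bookkeeping the paper leaves implicit (matching each difference-of-reciprocals against its ramp version and checking the nonnegativity of the coefficients), so the two arguments are the same in substance.
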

\begin{proof}
%Since $w^+(u)\geq w(u)$, $W^{(4)}_G\geq W^{(3)}_G$.
By the definition of a ramp function, $w^+(u)\geq w(u)$ for any function $w(u)$, and so $W^{(4)}_G\geq W^{(3)}_G$.
Since the domain of (P4) is the same as the domain of (P3), OPT(P3)$\leq$ OPT(P4). By Corollary \ref{cor:P1=P3}, the desired conclusion is obtained.
\end{proof}

\begin{Lemma}\label{lem:P4_maximum}
The maximum of (P4) is achieved when $r=p_0$, $r_0=g_0$, $h=p_0+q-e$ and $p=q_0+f-y'$ hold.
\end{Lemma}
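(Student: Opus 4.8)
The plan is to show that the objective function $W^{(4)}_G$ is monotone in each of the four variables $r$, $r_0$, $h$, $p$ when the other variables are held fixed, so that the maximum over the box defined by (P4) is attained at the stated endpoints. First I would observe that all the quantities appearing as denominators are strictly positive on the domain of (P4): this follows from Lemma \ref{lem:var>0_1} together with the interval constraints, and it guarantees that every ramp-function term $\frac{(e_0-e)^+}{ee_0}$, $\frac{(g_0-q_0)^+}{eq_0g_0}$, $\frac{3(f-e)^+}{qef}$, $\frac{(q-q_0)^+}{eq_0q}$ is nonnegative. Consequently the bracketed expression multiplying $r_0 e_0 g_0$ is a sum of nonnegative terms, and $e_0 g_0 > 0$, so $W^{(4)}_G$ is nondecreasing in $r_0$ on $[0,g_0]$; hence we may set $r_0 = g_0$.

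Next I would treat $r$. Writing $W^{(4)}_G = e_0 g_0 r_0 \bigl[ A + r B \bigr]$ where
\[
A = \frac{1}{p_0}\Bigl[\Bigl(\tfrac{1}{q_0}+\tfrac{1}{g_0}\Bigr)\tfrac{(e_0-e)^+}{ee_0}+\tfrac{(g_0-q_0)^+}{eq_0g_0}\Bigr],
\qquad
B = \frac{1}{h}\Bigl\{\tfrac{1}{p_0}\bigl[\cdots\bigr]+\tfrac{1}{p}\bigl[\cdots\bigr]\Bigr\}
\]
collects the $r$-free and $r$-linear parts, I would check that $B\ge 0$: every ramp term is nonnegative and $h,p,p_0,q_0>0$, so $B$ is a nonnegative combination of nonnegative quantities. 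Therefore $W^{(4)}_G$ is nondecreasing in $r$ on $[0,p_0]$, and we may set $r = p_0$. With $r_0=g_0$ and $r=p_0$ fixed, I would then examine the dependence on $h$: after substituting $r=p_0$, the only occurrences of $h$ are through the factor $\frac{r}{h} = \frac{p_0}{h}$, which multiplies a nonnegative quantity (again because all ramp terms are nonnegative and the remaining denominators positive); since $h$ enters only as $1/h$ with a nonnegative coefficient, the expression is nonincreasing in $h$, so the maximum is at the smallest admissible value $h = p_0 + q - e$ (this lower endpoint is strictly positive by Lemma \ref{lem:var>0_1}(3), so no division issue arises). Finally, for $p$: after the previous substitutions, $p$ appears only in the term $\frac{r}{hp}\bigl[\cdots\bigr] = \frac{p_0}{h\,p}\bigl[\cdots\bigr]$, where the bracket $\frac{(e_0-e)^+}{q_0 e e_0}+\frac{3(f-e)^+}{qef}+\frac{(q-q_0)^+}{e q_0 q}$ is nonnegative; as $p$ enters only as $1/p$ with a nonnegative coefficient, the objective is nonincreasing in $p$, so the maximum occurs at the smallest value $p = q_0 + f - y'$, which is again strictly positive by Lemma \ref{lem:var>0_1}.

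The key point throughout is that replacing the differences $e_0-e$, $g_0-q_0$, $f-e$, $q-q_0$ by their ramp functions makes \emph{every} term in $W^{(4)}_G$ nonnegative, which is exactly what turns the objective into a function that is separately monotone in $r$, $r_0$, $h$, $p$; without the ramp functions these monotonicities could fail because of cancellation. I would also note that the order of the substitutions matters only superficially — once one argues $B\ge 0$ and that the bracketed $h$- and $p$-coefficients are nonnegative, all four conclusions hold simultaneously, since fixing $r_0$, $r$ at their upper endpoints does not affect the sign analysis for $h$ and $p$, and fixing $h$, $p$ at their lower endpoints does not affect the sign analysis for $r_0$, $r$. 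The main (though still routine) obstacle is bookkeeping: one must verify carefully that each of the several fractional subexpressions really is nonnegative under the constraints of (P4), i.e., that every denominator that occurs is positive on the whole domain, for which Lemma \ref{lem:var>0_1} and Lemmas \ref{lem:decrease_N(K)}, \ref{lem:N(S)_3} are exactly the tools needed; combined with Theorem \ref{thm:Weierstrass_thm}, which guarantees a maximizer exists, this yields that the maximum of (P4) is attained at a point with $r=p_0$, $r_0=g_0$, $h=p_0+q-e$, and $p=q_0+f-y'$.
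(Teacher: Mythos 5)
Your proof is correct and follows essentially the same route as the paper's: both replace the (guaranteed to exist) maximizer by the point obtained by pushing $r_0, r$ to their upper endpoints and $h, p$ to their lower endpoints, using the nonnegativity of the ramp-function terms and the positivity of all denominators (via Lemma \ref{lem:var>0_1}) to establish the needed monotonicity. The paper packages the comparison as $\frac{r}{h}\leq\frac{r'}{h'}$ and $\frac{r}{hp}\leq\frac{r'}{h'p'}$ applied to the two nonnegative brackets at once, whereas you argue variable by variable, but the underlying observation and the conclusion are identical.
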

\begin{proof}
Let $P_0=(x,y',e_0,e,f,g_0,q_0,q,p_0,p,h,r_0,r)$ be a point that achieves the maximum of (P4). Let $P'_0=(x,y',e_0,e,f,g_0,q_0,q,p_0,p',h',r'_0,r')$, where $r'=p_0$, $r'_0=g_0$, $h'=p_0+q-e$ and $p'=q_0+f-y'$.
Note that $P'_0$ is in the domain of (P4).

It follows from Lemma \ref{lem:var>0_1} and the definitions of all variables that $e_0,e,f,g_0,q_0,q,p_0>0$, $h\geq p_0+q-e>0$ and $p\geq q_0+f-y'>0$ in the domain of (P4). Since $P_0$ is in the domain of (P4), $r'\geq r$, $r'_0\geq r_0$, $h'\leq h$ and $p'\leq p$. Then $0<\frac{r}{h}\leq\frac{r'}{h'}$ and $0<\frac{r}{hp}\leq\frac{r'}{h'p'}$. Since all ramp functions are non-negative,
$$(\frac{1}{p_0}+\frac{r}{hp_0})[(\frac{1}{q_0}+\frac{1}{g_0})\frac{(e_0-e)^+}{ee_0}+\frac{(g_0-q_0)^+}{eq_0g_0}]\leq (\frac{1}{p_0}+\frac{r'}{h'p_0})[(\frac{1}{q_0}+\frac{1}{g_0})\frac{(e_0-e)^+}{ee_0}+\frac{(g_0-q_0)^+}{eq_0g_0}]$$
and
$$\frac{r}{hp}[\frac{(e_0-e)^+}{q_0ee_0}+\frac{3(f-e)^+}{qef}+\frac{(q-q_0)^+}{eq_0q}]\leq
\frac{r'}{h'p'}[\frac{(e_0-e)^+}{q_0ee_0}+\frac{3(f-e)^+}{qef}+\frac{(q-q_0)^+}{eq_0q}].$$
Recall that $r'_0\geq r_0$. Thus $W^{(4)}_G(P'_0)\geq W^{(4)}_G(P_0)$.

So $P'_0$ is a point with $r=p_0$, $r_0=g_0$, $h=p_0+q-e$ and $p=q_0+f-y'$ that achieves the maximum of (P4).
\end{proof}

Let
\begin{align*}
W^{(5)}_G
=e_0g_0^2\{&(\frac{1}{p_0}+\frac{1}{p_0+q-e})[(\frac{1}{q_0}+\frac{1}{g_0})\frac{(e_0-e)^+}{ee_0}+\frac{(g_0-q_0)^+}{eq_0g_0}]+\\
&\frac{p_0}{(p_0+q-e)(q_0+f-y')}[\frac{(e_0-e)^+}{q_0ee_0}+\frac{3(f-e)^+}{qef}+\frac{(q-q_0)^+}{eq_0q}]\}.
\end{align*}
Thus we construct a new programming (P5) with the new objective function $W^{(5)}_G$ and a subset of the previous constraints as follows:
\begin{align*}
&\text{(P5): maximize } W^{(5)}_G \\
& \quad\text{s.t.}\\
%& \quad\text{s.t. for all } 1\leq i \leq R_0 \\
& \qquad\text{I. Degree constraints:} & x \in [1 - d, 1], \\
                                    & & y' \in [1 - d, 1], \\
& \qquad\text{II. Triangle constraints:} & e_0 \in [x - d, x], \\
                                       & & e \in [x + y' - 1, x], \\
                                       & & f \in [y' - d, y'], \\
& \qquad\text{III. } K_4 \text{ constraints:} &g_0\in[e_0-d,e_0],\\
                                            & & q_{0} \in [e + e_0 - x, e_0], \\
                                            & & q \in [e + f - y', e], \\
& \qquad\text{IV. } K_5 \text{ constraints:} & p_{0}\in[g_0+e-x,g_0].
\end{align*}
\begin{Corollary}\label{cor:P5>=P4}
 {\rm OPT(P1)$\leq$ OPT(P4)= OPT(P5)}.
\end{Corollary}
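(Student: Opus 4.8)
The plan is to deduce the corollary from Lemma~\ref{lem:P4_maximum} and Lemma~\ref{lem:P4>=P3}. Lemma~\ref{lem:P4>=P3} already supplies ${\rm OPT(P1)}={\rm OPT(P3)}\leq{\rm OPT(P4)}$, so the only remaining point is the equality ${\rm OPT(P4)}={\rm OPT(P5)}$, and everything below is aimed at that.

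First I would record the bookkeeping that connects the two programs. The constraint list of (P5) is literally the sublist I--IV of (P4), i.e.\ the constraints that involve none of $p$, $h$, $r_0$, $r$; so a feasible point of (P5) is the same thing as a choice of the nine variables $x,y',e_0,e,f,g_0,q_0,q,p_0$ that extends to a feasible point of (P4). Moreover, when one performs the substitution $r=p_0$, $r_0=g_0$, $h=p_0+q-e$, $p=q_0+f-y'$ in $W^{(4)}_G$, the prefactor $e_0g_0r_0$ becomes $e_0g_0^2$, the factor $\frac{1}{p_0}+\frac{r}{hp_0}$ becomes $\frac{1}{p_0}+\frac{1}{p_0+q-e}$, and the factor $\frac{r}{hp}$ becomes $\frac{p_0}{(p_0+q-e)(q_0+f-y')}$, while the two bracketed sums of ramp functions are untouched; the result is exactly $W^{(5)}_G$. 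Hence $W^{(4)}_G$ and $W^{(5)}_G$ agree, as functions of $x,y',e_0,e,f,g_0,q_0,q,p_0$, on the slice of the domain of (P4) cut out by those four equalities.

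For ${\rm OPT(P4)}\leq{\rm OPT(P5)}$ I would apply Lemma~\ref{lem:P4_maximum}: it produces a maximiser $P_0$ of (P4) lying in exactly that slice, whose nine-variable projection is by construction feasible for (P5), and on which $W^{(5)}_G$ equals $W^{(4)}_G(P_0)={\rm OPT(P4)}$. For the reverse inequality I would check that constraints V and VI of (P4) together with the $p$-constraint of IV are ``self-completing'': starting from any feasible point of (P5), set $r=p_0$, $r_0=g_0$, $h=p_0+q-e$, $p=q_0+f-y'$. Then $r_0=g_0\in[0,g_0]$ and $r=p_0\in[0,p_0]$ hold at once; $h=p_0+q-e$ is the left endpoint of $[p_0+q-e,p_0]$ and lies below $p_0$ since $q\leq e$ by the $K_4$-constraint; $p=q_0+f-y'$ is the left endpoint of $[q_0+f-y',q_0]$ and lies below $q_0$ since $f\leq y'$ by the triangle constraint; and $h,p>0$ by Lemma~\ref{lem:var>0_1}. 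So this extension is a feasible point of (P4), on which $W^{(4)}_G=W^{(5)}_G$ by the previous paragraph. Taking suprema in both directions yields ${\rm OPT(P4)}={\rm OPT(P5)}$, and combining with Lemma~\ref{lem:P4>=P3} and Corollary~\ref{cor:P1=P3} gives ${\rm OPT(P1)}\leq{\rm OPT(P4)}={\rm OPT(P5)}$.

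I do not expect a genuine obstacle here: the content has already been done in Lemma~\ref{lem:P4_maximum}, and the only care needed is the two routine verifications above --- that the algebraic substitution reproduces $W^{(5)}_G$ term by term, and that dropping the $K_6$-constraint, the second $K_5$-constraint and the ``number of terms'' constraints when passing to (P5) is harmless because those are automatically met at the boundary values $r=p_0$, $r_0=g_0$, $h=p_0+q-e$, $p=q_0+f-y'$.
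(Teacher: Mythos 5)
Your proof is correct and follows the same route as the paper, which simply cites Lemma~\ref{lem:P4_maximum} (for ${\rm OPT(P4)}={\rm OPT(P5)}$) and Lemma~\ref{lem:P4>=P3} (for ${\rm OPT(P1)}\leq{\rm OPT(P4)}$). You merely spell out the bookkeeping that the paper leaves implicit --- that the substitution $r=p_0,r_0=g_0,h=p_0+q-e,p=q_0+f-y'$ turns $W^{(4)}_G$ into $W^{(5)}_G$, and that a (P5)-feasible point extends to a (P4)-feasible one at those boundary values --- which is exactly the content the terse citation of Lemma~\ref{lem:P4_maximum} is meant to carry.
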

\begin{proof}
By Lemma \ref{lem:P4_maximum}, {\rm OPT(P4)= OPT(P5)}. Lemma \ref{lem:P4>=P3} implies that {\rm OPT(P1)$\leq$ OPT(P4)}. Then the desired conclusion holds.
%It follows from Lemmas \ref{lem:P4>=P3} and \ref{lem:P4_maximum} that the desired conclusion holds.
\end{proof}

\begin{Lemma}\label{lem:P5_maximum}
The maximum of (P5) is achieved when $p_0=g_0+e-x$ and $q_0=e_0+e-x$ hold.
\end{Lemma}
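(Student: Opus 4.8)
The plan is to show the two boundary settings $p_0 = g_0 + e - x$ and $q_0 = e_0 + e - x$ cannot decrease the objective $W_G^{(5)}$, so that a maximizer can be taken with these equalities. In other words, I want to argue that $W_G^{(5)}$ is monotone in each of $p_0$ and $q_0$ in the appropriate direction on the feasible box, so pushing each of these variables to the relevant endpoint of its interval is harmless (or beneficial). Since the objective is a sum of terms, I would isolate the dependence on $p_0$ and on $q_0$ separately and treat them one at a time.

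\textbf{Dependence on $p_0$.} Fix all other variables. The variable $p_0$ appears in $W_G^{(5)}$ only through the prefactor $\bigl(\frac{1}{p_0} + \frac{1}{p_0 + q - e}\bigr)$ multiplying the first bracket, and through the factor $\frac{p_0}{(p_0 + q - e)(q_0 + f - y')}$ multiplying the second bracket. The first prefactor is a sum of two strictly decreasing positive functions of $p_0$ (recall $p_0 + q - e > 0$ by Lemma~\ref{lem:var>0_1}), so it is strictly decreasing in $p_0$. For the second, write $\frac{p_0}{p_0 + q - e} = 1 - \frac{q - e}{p_0 + q - e}$; since $q \leq e$ (as $q \in [e+f-y',e]$ and $f \leq y'$), we have $q - e \leq 0$, so $\frac{p_0}{p_0+q-e}$ is nonincreasing in $p_0$ as well, hence so is the whole second factor (the remaining $\frac{1}{q_0+f-y'}$ is a positive constant here). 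Both brackets are nonnegative (they are sums of ramp functions times positive coefficients). Therefore $W_G^{(5)}$ is nonincreasing in $p_0$, and its maximum over $p_0 \in [g_0 + e - x, g_0]$ is attained at the left endpoint $p_0 = g_0 + e - x$. I should double-check the sign $q \le e$ carefully using the triangle/$K_4$ constraints, since it is the crux of the second-factor monotonicity.

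\textbf{Dependence on $q_0$.} Now fix the other variables (including $p_0$, possibly already set to its lower bound). The variable $q_0$ appears through $\bigl(\frac{1}{q_0} + \frac{1}{g_0}\bigr)\frac{(e_0-e)^+}{ee_0}$, through $\frac{(g_0-q_0)^+}{eq_0g_0}$, and inside the second bracket through $\frac{(e_0-e)^+}{q_0 e e_0}$ and $\frac{(q-q_0)^+}{eq_0 q}$; the $p_0$-prefactor now does not involve $q_0$. Each of these is nonincreasing in $q_0$: $\frac{1}{q_0}$ and $\frac{1}{q_0}$-type terms obviously are; $\frac{(g_0-q_0)^+}{q_0}$ is a ratio of a nonincreasing nonnegative numerator to an increasing positive denominator, hence nonincreasing; and $\frac{(q-q_0)^+}{q_0}$ is the same story. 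All coefficients are positive. Hence $W_G^{(5)}$ is nonincreasing in $q_0$ on $q_0 \in [e_0 + e - x, e_0]$, and the maximum is attained at $q_0 = e_0 + e - x$. Since the two monotonicity arguments are in disjoint sets of terms up to the shared nonnegativity facts, they can be applied in succession: start from an arbitrary maximizer guaranteed by Weierstrass' Theorem, lower $p_0$ to $g_0 + e - x$ without decreasing the objective, then lower $q_0$ to $e_0 + e - x$ likewise, and the resulting point is a maximizer with the claimed equalities.

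\textbf{Main obstacle.} The routine part is the sign bookkeeping; the one place where care is genuinely needed is establishing $q \le e$ (equivalently $q - e \le 0$) so that $\frac{p_0}{p_0 + q - e}$ is nonincreasing in $p_0$ — without this the second factor's monotonicity in $p_0$ fails and the reduction $p_0 = g_0 + e - x$ is not justified. I would verify it from $q \in [e + f - y', e]$ directly (the upper bound is exactly $q \le e$), and similarly use the ramp-function nonnegativity established just before (P4) to guarantee both bracketed expressions are $\ge 0$ throughout, which is what lets the monotone prefactors be pushed to their extreme values.
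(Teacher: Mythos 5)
Your overall approach is the same as the paper's: push $p_0$ and then $q_0$ to the left endpoints of their feasible intervals by showing $W^{(5)}_G$ is nonincreasing in each, using that both bracketed expressions are nonnegative sums of ramp terms and that the prefactors move in the right direction. The paper replaces $p_0$ and $q_0$ simultaneously and verifies the resulting chain of inequalities in one pass; your sequential version is a clean restatement of the same argument, and your identification of $q\leq e$ (from the $K_4$ constraint $q\in[e+f-y',e]$) as the crux of the $p_0$-monotonicity of $\tfrac{p_0}{p_0+q-e}$ matches the step $p_0(q-e)\leq p_0'(q-e)$ in the paper.

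There is one small bookkeeping slip in your $q_0$-dependence paragraph: you list the occurrences of $q_0$ as the two terms in the first bracket and the two $q_0$-terms in the second bracket, and you note the prefactor $\tfrac{1}{p_0}+\tfrac{1}{p_0+q-e}$ is independent of $q_0$. But the second prefactor $\tfrac{p_0}{(p_0+q-e)(q_0+f-y')}$ also depends on $q_0$, through $\tfrac{1}{q_0+f-y'}$, and you never mention it. Luckily this factor is positive (since $q_0+f-y'\geq q_0'+f-y'>0$ by Lemma~\ref{lem:var>0_1}, via Lemma~\ref{lem:N(S)_3}) and strictly decreasing in $q_0$, which is exactly the direction needed, so after adding this line your argument closes; the paper explicitly records this inequality as $0<\tfrac{1}{q_0+f-y'}\leq\tfrac{1}{q_0'+f-y'}$. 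Without it, the claim ``$W^{(5)}_G$ is nonincreasing in $q_0$'' is not fully justified as written, since you have not accounted for every $q_0$-dependent factor.
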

\begin{proof}
Let $P_0=(x,y',e_0,e,f,g_0,q_0,q,p_0)$ be a point that achieves the maximum of (P5). Let $P'_0=(x,y',e_0,e,f,g_0,q'_0,q,p'_0)$, where $p'_0=g_0+e-x$ and $q'_0=e_0+e-x$.
It is clear that $P'_0$ is in the domain of (P5).

By Lemma \ref{lem:var>0_1} and the definitions of all variables, $e_0,e,f,g_0,q'_0,q,p'_0>0$ in the domain of (P5). Since $P_0$ is in the domain of (P4), $p_0\geq p'_0$ and $q_0\geq q'_0$. For $d<\frac{1}{5}$, we know that
$$p'_0+q-e=g_0+q-x\geq e_0-d+e+f-y'-x\geq x+y'-3d-1\geq 1-5d>0,$$
where the first inequality holds by $g_0\geq e_0-d$ and $q\geq e+f-y'$, and the second inequality holds by $e_0\geq x-d$, $e\geq x+y'-1$ and $f\geq y'-d$, and the penultimate inequality holds by $x,y'\geq 1-d$. Then $\frac{1}{p_0+q-e}\leq \frac{1}{p'_0+q-e}$.
Therefore,
$$0<\frac{1}{p_0}+\frac{1}{p_0+q-e}\leq \frac{1}{p'_0}+\frac{1}{p'_0+q-e}.$$
Since $(e_0-e)^+\geq0$, $0\leq(\frac{1}{q_0}+\frac{1}{g_0})\frac{(e_0-e)^+}{ee_0}\leq (\frac{1}{q'_0}+\frac{1}{g_0})\frac{(e_0-e)^+}{ee_0}.$

Claim that $$0\leq\frac{(g_0-q_0)^+}{eq_0g_0}\leq\frac{(g_0-q'_0)^+}{eq'_0g_0}.$$
Indeed, if $g_0\leq q_0$, then $\frac{(g_0-q_0)^+}{eq_0g_0}=0$, and so $\frac{(g_0-q_0)^+}{eq_0g_0}\leq\frac{(g_0-q'_0)^+}{eq'_0g_0}$. If $g_0>q_0$, then $0<\frac{(g_0-q_0)^+}{eq_0g_0}=\frac{1}{eq_0}-\frac{1}{eg_0}\leq \frac{1}{eq'_0}-\frac{1}{eg_0}\leq \frac{(g_0-q'_0)^+}{eq'_0g_0}$.

Since $f\geq y'-d$, $e\geq x+y'-1$ and $e_0\geq x-d$, $q'_0+f-y'=e+e_0-x+f-y'\geq x+y'-1-2d$.
By $x,y'\geq 1-d$ and $d<\frac{1}{5}$, $x+y'-1-2d\geq 1-4d>0$. That is, $q'_0+f-y'=e+e_0-x+f-y'>0$. Since $q_0\geq q'_0$,
$$0<\frac{1}{q_0+f-y'}\leq \frac{1}{q'_0+f-y'}.$$
Since $e\geq q$, $p_0(q-e)\leq p'_0(q-e)$ and so $p_0(p'_0+q-e)\leq p'_0(p_0+q-e)$. Recall that $p_0+q-e\geq p'_0+q-e>0$. Thus
%$$\frac{p_0}{p_0+q-e}=\frac{1}{1-\frac{e-q}{p_0}}\leq \frac{1}{1-\frac{e-q}{p'_0}}=\frac{p'_0}{p'_0+q-e}.$$
$$0<\frac{p_0}{p_0+q-e}\leq \frac{p'_0}{p'_0+q-e}.$$

Since all ramp functions are nonnegative, $0\leq\frac{(e_0-e)^+}{q_0ee_0}+\frac{3(f-e)^+}{qef}\leq\frac{(e_0-e)^+}{q_0ee_0}+\frac{3(f-e)^+}{qef}$. Claim that $$0\leq\frac{(q-q_0)^+}{eq_0q}\leq \frac{(q-q'_0)^+}{eq'_0q}.$$
Indeed, if $q\leq q_0$, then $\frac{(q-q_0)^+}{eq_0q}=0$, and so $\frac{(q-q_0)^+}{eq_0q}\leq \frac{(q-q'_0)^+}{eq'_0q}.$ If $q>q_0$, then $\frac{(q-q_0)^+}{eq_0q}=\frac{1}{eq_0}-\frac{1}{eq}\leq \frac{1}{eq'_0}-\frac{1}{eq}=\frac{(q-q'_0)^+}{eq'_0q}.$
So $$0\leq\frac{(e_0-e)^+}{q_0ee_0}+\frac{3(f-e)^+}{qef}+\frac{(q-q_0)^+}{eq_0q}\leq\frac{(e_0-e)^+}{q_0ee_0}+\frac{3(f-e)^+}{qef}+
\frac{(q-q'_0)^+}{eq'_0q}.$$

To sum up, $W^{(5)}_G(P'_0)\geq W^{(5)}_G(P_0)$. Thus $P'_0$ is a optimal solution that $p_0=g_0+e-x$ and $q_0=e_0+e-x$.
\end{proof}

Let
\begin{align*}
W^{(6)}_G
=&e_0g_0^2\{(\frac{1}{g_0+e-x}+\frac{1}{g_0+q-x})[(\frac{1}{e+e_0-x}+\frac{1}{g_0})\frac{(e_0-e)^+}{ee_0}+\frac{(g_0-e-e_0+x)^+}{e(e+e_0-x)g_0}]+\\
&\frac{g_0+e-x}{(g_0+q-x)(e+e_0-x+f-y')}[\frac{(e_0-e)^+}{(e+e_0-x)ee_0}+\frac{3(f-e)^+}{qef}+\frac{(q-e-e_0+x)^+}{e(e+e_0-x)q}]\}.
\end{align*}
Thus we may replace (P5) with a new programming (P6) whose maximum value is at least that of (P5) as follows:
%Thus we construct a new programming (P6) with the following new objective function and a subset of the previous constraints as follows:
\begin{align*}
&\text{(P6): maximize } W^{(6)}_G \\
& \quad\text{s.t.}\\
%& \quad\text{s.t. for all } 1\leq i \leq R_0 \\
& \qquad\text{I. Degree constraints:} & x \in [1 - d, 1], \\
                                    & & y' \in [1 - d, 1], \\
& \qquad\text{II. Triangle constraints:} & e_0 \in [x - d, x], \\
                                       & & e \in [x + y' - 1, x], \\
                                       & & f \in [y' - d, y'], \\
& \qquad\text{III. } K_4 \text{ constraints:} &g_0\in[e_0-d,e_0],\\
                                            & & q \in [e + f - y', e].
\end{align*}
\begin{Corollary}\label{cor:P6>=P5}
 {\rm OPT(P1)$\leq$ OPT(P5)= OPT(P6)}.
\end{Corollary}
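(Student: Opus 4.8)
The plan is to follow exactly the pattern already used to pass from (P4) to (P5), now invoking Lemma~\ref{lem:P5_maximum} in place of Lemma~\ref{lem:P4_maximum}. First I would note that, by Theorem~\ref{thm:Weierstrass_thm}, (P5) attains its maximum, and by Lemma~\ref{lem:P5_maximum} this maximum is attained at some feasible point $P_0$ of (P5) satisfying the two identities $p_0 = g_0 + e - x$ and $q_0 = e_0 + e - x$.

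Next, the crucial point is purely formal: substituting $p_0 = g_0 + e - x$ (so that $p_0 + q - e = g_0 + q - x$) and $q_0 = e_0 + e - x$ (so that $q_0 + f - y' = e + e_0 - x + f - y'$, $g_0 - q_0 = g_0 - e - e_0 + x$, and $q - q_0 = q - e - e_0 + x$) into the objective $W^{(5)}_G$ and simplifying term by term yields precisely the expression that defines $W^{(6)}_G$, now regarded as a function of the remaining seven variables $x, y', e_0, e, f, g_0, q$.

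It then remains to check that the feasible regions line up. The constraints of (P6) are exactly those constraints of (P5) that do not involve $p_0$ or $q_0$. On the one hand, starting from an arbitrary feasible point of (P6) and setting $p_0 := g_0 + e - x$ and $q_0 := e_0 + e - x$ produces a feasible point of (P5): the lower constraints $p_0 \geq g_0 + e - x$ and $q_0 \geq e + e_0 - x$ hold with equality, the upper constraints $p_0 \leq g_0$ and $q_0 \leq e_0$ both reduce to $e \leq x$ (which holds since $e \in [x+y'-1, x]$), and positivity of $p_0 = g_0 + e - x$ and $q_0 = e + e_0 - x$ is guaranteed by Lemma~\ref{lem:var>0_1}(2). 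On the other hand, restricting any feasible point of (P5) satisfying the two identities to its remaining coordinates gives a feasible point of (P6). Since the objective values agree under this correspondence, OPT(P5) $=$ OPT(P6). Combining this with Corollary~\ref{cor:P5>=P4}, which gives OPT(P1) $\leq$ OPT(P4) $=$ OPT(P5), yields the asserted chain OPT(P1) $\leq$ OPT(P5) $=$ OPT(P6). (One may also remark in passing that $W^{(6)}_G$ is well-defined on the domain of (P6), since all denominators occurring in it are positive there by Lemma~\ref{lem:var>0_1}.)

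I do not expect a genuine obstacle here; the argument is entirely routine. The only step deserving a second look is the term-by-term verification that the substitution carries $W^{(5)}_G$ into $W^{(6)}_G$, together with the short feasibility check above, which collapses to the inequality $e \leq x$ and the positivity assertions of Lemma~\ref{lem:var>0_1}.
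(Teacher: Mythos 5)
Your proposal is correct and follows the same route as the paper, which simply cites Corollary~\ref{cor:P5>=P4} together with Lemma~\ref{lem:P5_maximum}; you have merely spelled out the substitution and feasibility-matching steps that the paper leaves implicit.
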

\begin{proof}
It follows from Corollary \ref{cor:P5>=P4} and Lemma \ref{lem:P5_maximum} that the desired conclusion holds.
\end{proof}

%%%%%%%%%%%%%%%%%%%%%%%%%%%分章节%%%%%%%%%%%%%%%%%%%%%%%%%%%%%%%%%%%%%%%%%%%%
\subsection{Reduction to four variables}\label{sec:4_variables}
Let
\begin{align*}
W^{(7)}_G
=e_0g_0^2\{&(\frac{1}{g_0+e-x}+\frac{1}{g_0+e+f-y'-x})[(\frac{1}{e+e_0-x}+\frac{1}{g_0})\frac{(e_0-e)^+}{ee_0}+\\
&\frac{(g_0-e-e_0+x)^+}{e(e+e_0-x)g_0}]+\frac{g_0+e-x}{(g_0+e+f-y'-x)(e+e_0-x+f-y')}\\
&[\frac{(e_0-e)^+}{(e+e_0-x)ee_0}+\frac{3(f-e)^+}{(e+f-y')ef}+\frac{x-e_0}{e^2(e+e_0-x)}]\}.
\end{align*}
Here is the new programming:
\begin{align*}
&\text{(P7): maximize } W^{(7)}_G \\
& \quad\text{s.t.}\\
%& \quad\text{s.t. for all } 1\leq i \leq R_0 \\
& \qquad\text{I. Degree constraints:} & x \in [1 - d, 1], \\
                                    & & y' \in [1 - d, 1], \\
& \qquad\text{II. Triangle constraints:} & e_0 \in [x - d, x], \\
                                       & & e \in [x + y' - 1, x], \\
                                       & & f \in [y' - d, y'], \\
& \qquad\text{III. } K_4 \text{ constraints:} &g_0\in[e_0-d,e_0].
\end{align*}
\begin{Lemma}\label{lem:P7>=P6}
 {\rm OPT(P7)$\geq$ OPT(P6)$\geq$ OPT(P1)}.
\end{Lemma}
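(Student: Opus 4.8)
The plan is to establish the stronger pointwise statement: for every feasible point $(x,y',e_0,e,f,g_0,q)$ of (P6), the value of $W^{(7)}_G$ at $(x,y',e_0,e,f,g_0)$ is at least the value of $W^{(6)}_G$ at $(x,y',e_0,e,f,g_0,q)$. Since (P6) and (P7) share every constraint except that (P7) simply has no variable $q$, each feasible point of (P6) projects to a feasible point of (P7); hence this pointwise domination gives $\mathrm{OPT}(P6)\le\mathrm{OPT}(P7)$, and combining with Corollary~\ref{cor:P6>=P5} (which already gives $\mathrm{OPT}(P1)\le\mathrm{OPT}(P6)$) yields the claimed chain $\mathrm{OPT}(P7)\ge\mathrm{OPT}(P6)\ge\mathrm{OPT}(P1)$.

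First I would fix all variables other than $q$ and treat $W^{(6)}_G$ as a function of $q\in[e+f-y',e]$. Using $d\le\frac{2}{33}<\frac15$ together with Lemma~\ref{lem:var>0_1} and the (P6) constraints, one checks that every denominator occurring in $W^{(6)}_G$ is strictly positive on this interval: in particular $e,e_0,f,g_0,e+e_0-x,e+f-y',g_0+e-x$ and $q$ are positive, and also $g_0+q-x\ge g_0+e+f-y'-x>0$ and $e+e_0-x+f-y'\ge 1-4d>0$. Moreover the prefactor $e_0g_0^2$, all ramp terms, and the two bracketed factors
\[
A=\Big(\tfrac1{e+e_0-x}+\tfrac1{g_0}\Big)\tfrac{(e_0-e)^+}{ee_0}+\tfrac{(g_0-e-e_0+x)^+}{e(e+e_0-x)g_0},\qquad B=\tfrac{(e_0-e)^+}{(e+e_0-x)ee_0}
\]
are nonnegative.

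The key step is a termwise comparison in $q$. The only quantities in $W^{(6)}_G$ that depend on $q$ are $\frac1{g_0+q-x}$ (occurring twice, once next to $A$ and once inside $\frac{g_0+e-x}{(g_0+q-x)(e+e_0-x+f-y')}$), the factor $\frac{3(f-e)^+}{qef}$, and the ramp term $\frac{(q-e-e_0+x)^+}{e(e+e_0-x)q}$. From $q\ge e+f-y'$ we get $\frac1{g_0+q-x}\le\frac1{g_0+e+f-y'-x}$ and $\frac{3(f-e)^+}{qef}\le\frac{3(f-e)^+}{(e+f-y')ef}$. For the ramp term, set $c=e+e_0-x>0$; on the subinterval where $q>c$ it equals $\frac1e\big(\frac1c-\frac1q\big)$, which is nondecreasing in $q$, and it vanishes elsewhere, so on $[e+f-y',e]$ it is maximized at $q=e$, where (using $e-c=x-e_0\ge 0$) it equals $\frac{x-e_0}{e^2(e+e_0-x)}$. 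Substituting these three bounds into $W^{(6)}_G$ — legitimate because $e_0g_0^2$, $A$, $B$ and every remaining factor are nonnegative — turns the expression into exactly the definition of $W^{(7)}_G$, giving $W^{(6)}_G\le W^{(7)}_G$ at the corresponding points.

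The main obstacle — in fact the only real subtlety — is that the optimization does \emph{not} drive $q$ to a single endpoint: the three $q$-dependent factors listed above are nonincreasing in $q$ (so each is largest at $q=e+f-y'$), while the ramp term $\frac{(q-e-e_0+x)^+}{e(e+e_0-x)q}$ is nondecreasing in $q$ (so it is largest at $q=e$). Consequently, unlike in Lemmas~\ref{lem:P4_maximum} and~\ref{lem:P5_maximum}, one cannot eliminate $q$ by substituting a single optimal value; one must bound each $q$-dependent term by its own supremum over $[e+f-y',e]$, which is precisely why (P7) is only an upper bound for (P6) and the conclusion is an inequality rather than an equality. Everything else is routine positivity bookkeeping for the denominators, immediate from $d<\frac15$, Lemma~\ref{lem:var>0_1}, and the (P6) constraints.
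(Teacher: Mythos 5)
Your proposal is correct and follows essentially the same route as the paper: fix all non-$q$ variables, observe that $W^{(6)}_G\leq W^{(7)}_G$ pointwise by bounding each $q$-dependent factor by its supremum over $[e+f-y',e]$ (the terms $1/(g_0+q-x)$ and $3(f-e)^+/(qef)$ at $q=e+f-y'$, the ramp term at $q=e$ where it becomes $(x-e_0)/(e^2(e+e_0-x))$), check nonnegativity of all prefactors via Lemma~\ref{lem:var>0_1} and $d<\tfrac15$, and then chain with Corollary~\ref{cor:P6>=P5}. Your explicit remark that the $q$-dependent terms pull in opposite directions — so one bounds termwise rather than substituting a single optimizer — is exactly the structural point behind replacing the equality of earlier reductions by an inequality here, and matches the paper's treatment.
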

\begin{proof}
By Lemma \ref{lem:var>0_1}, $e_0,e,f,g_0>0$, $g_0+e-x>0$, $e+e_0-x>0$ and $e+f-y'>0$.

By the domain of (P6), $g_0+q-x\geq g_0+e+f-y'-x\geq e_0+y'-1-2d\geq x+y'-1-3d\geq 1-5d>0$ because of $d<\frac{1}{5}$.
Since $q \in [e + f - y', e]$, $0<\frac{1}{g_0+q-x}\leq \frac{1}{g_0+e+f-y'-x}$, and so $$0<\frac{1}{g_0+e-x}+\frac{1}{g_0+q-x}\leq \frac{1}{g_0+e-x}+ \frac{1}{g_0+e+f-y'-x}.$$
By the definition of a ramp function, $(\frac{1}{e+e_0-x}+\frac{1}{g_0})\frac{(e_0-e)^+}{ee_0}+\frac{(g_0-e-e_0+x)^+}{e(e+e_0-x)g_0}\geq0$.
%By $g_0>0$, $e+e_0-x>0$ and the definition of a ramp function, $(\frac{1}{e+e_0-x}+\frac{1}{g_0})\frac{(e_0-e)^+}{ee_0}+\frac{(g_0-e-e_0+x)^+}{e(e+e_0-x)g_0}\geq0$.

Note that $e+e_0-x+f-y'\geq x+y'-2d-1\geq1-4d>0$ by $d<\frac{1}{4}$. Since $g_0+e-x>0$ and $q\geq e + f - y'$, $$\frac{g_0+e-x}{(g_0+e+f-y'-x)(e+e_0-x+f-y')}\geq\frac{g_0+e-x}{(g_0+q-x)(e+e_0-x+f-y')}>0.$$

Since all ramp functions are nonnegative and $q\geq e+f-y'$, $\frac{(e_0-e)^+}{(e+e_0-x)ee_0}+\frac{3(f-e)^+}{(e+f-y')ef}\geq \frac{(e_0-e)^+}{(e+e_0-x)ee_0}+\frac{3(f-e)^+}{qef}\geq0$.
%Since $e+e_0-x>0$ and $e+f-y'>0$ by Lemma \ref{lem:var>0_1}, $\frac{(e_0-e)^+}{(e+e_0-x)ee_0}+\frac{3(f-e)^+}{(e+f-y')ef}\geq \frac{(e_0-e)^+}{(e+e_0-x)ee_0}+\frac{3(f-e)^+}{qef}\geq0$ by the definition of a ramp function.

Claim that $$\frac{x-e_0}{e^2(e+e_0-x)}\geq \frac{(q-e-e_0+x)^+}{e(e+e_0-x)q}\geq0.$$
Indeed, if $q\leq e+e_0-x$, then $(q-e-e_0+x)^+=0$, and so $\frac{(q-e-e_0+x)^+}{e(e+e_0-x)q}=0\leq \frac{x-e_0}{e^2(e+e_0-x)}$ by $e_0\in[x-d,x]$.
Suppose that $q>e+e_0-x$. It follows from $q\leq e$ that $\frac{(q-e-e_0+x)^+}{e(e+e_0-x)q}=\frac{1}{e}(\frac{1}{e+e_0-x}-\frac{1}{q})\leq \frac{1}{e}(\frac{1}{e+e_0-x}-\frac{1}{e})=\frac{x-e_0}{e^2(e+e_0-x)}$.

Therefore, $W^{(7)}_G\geq W^{(6)}_G$. Since the constraints of (P7) are a subset of the constraints of (P6), OPT(P7)$\geq$ OPT(P6). The desired conclusion is obtained by Corollary \ref{cor:P6>=P5}.
\end{proof}

Recall that
\begin{align*}
W^{(7)}_G
=e_0\{&(\frac{g_0^2}{g_0+e-x}+\frac{g_0^2}{g_0+e+f-y'-x})[(\frac{1}{e+e_0-x}+\frac{1}{g_0})\frac{(e_0-e)^+}{ee_0}+\\
&\frac{(g_0-e-e_0+x)^+}{e(e+e_0-x)g_0}]+\frac{g_0^2(g_0+e-x)}{(g_0+e+f-y'-x)(e+e_0-x+f-y')}\\
&[\frac{(e_0-e)^+}{(e+e_0-x)ee_0}+\frac{3(f-e)^+}{(e+f-y')ef}+\frac{x-e_0}{e^2(e+e_0-x)}]\}.
\end{align*}
Let
\begin{align*}
W^{(8)}_G
=&e_0\{(\frac{e_0^2}{e_0+e-x}+\frac{e_0^2}{e_0+e+f-y'-x})[(\frac{1}{e+e_0-x}+\frac{1}{e_0-d})\frac{(e_0-e)^+}{ee_0}+\\
%&\frac{x-e}{e(e+e_0-x)e_0}]+\frac{e_0^2(e_0+e-x)}{(e_0+e+f-y'-x)(e+e_0-x+f-y')}\\
%&[\frac{(e_0-e)^+}{(e+e_0-x)ee_0}+\frac{3(f-e)^+}{(e+f-y')ef}+\frac{x-e_0}{e^2(e+e_0-x)}]\}\\
&\frac{x-e}{e(e+e_0-x)e_0}]+\frac{e_0^2(e_0+e-x)}{(e_0+e+f-y'-x)^2}[\frac{(e_0-e)^+}{(e+e_0-x)ee_0}+\frac{3(f-e)^+}{(e+f-y')ef}+\\
&\frac{x-e_0}{e^2(e+e_0-x)}]\}\\
=&(\frac{e_0^2}{e_0+e-x}+\frac{e_0^2}{e_0+e+f-y'-x})[(\frac{1}{e+e_0-x}+\frac{1}{e_0-d})\frac{(e_0-e)^+}{e}+\frac{x-e}{e(e+e_0-x)}]\\
&+\frac{(e_0+e-x)e_0^3}{(e_0+e+f-y'-x)^2}[\frac{(e_0-e)^+}{(e+e_0-x)ee_0}+\frac{3(f-e)^+}{(e+f-y')ef}+\frac{x-e_0}{e^2(e+e_0-x)}].
\end{align*}
Here is the new programming:
\begin{align*}
&\text{(P8): maximize } W^{(8)}_G \\
& \quad\text{s.t.}\\
%& \quad\text{s.t. for all } 1\leq i \leq R_0 \\
& \qquad\text{I. Degree constraints:} & x \in [1 - d, 1], \\
                                    & & y' \in [1 - d, 1], \\
& \qquad\text{II. Triangle constraints:} & e_0 \in [x - d, x], \\
                                       & & e \in [x + y' - 1, x], \\
                                       & & f \in [y' - d, y'].
\end{align*}

\begin{Lemma}\label{lem:P8>=P7}
%For $d<\frac{1}{7}$,  {\rm OPT(P8)$\geq$ OPT(P7)$\geq$ OPT(P1)}.
{\rm OPT(P8)$\geq$ OPT(P7)$\geq$ OPT(P1)}.
\end{Lemma}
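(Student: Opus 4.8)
The plan is to establish $W^{(8)}_G\ge W^{(7)}_G$ at every feasible point; since the domain of (P8) is exactly the domain of (P7) with the constraint on $g_0$ deleted (and, conversely, every feasible point of (P8) extends to one of (P7) by taking $g_0=e_0$), this gives $\mathrm{OPT(P8)}\ge\mathrm{OPT(P7)}$, while $\mathrm{OPT(P7)}\ge\mathrm{OPT(P1)}$ is exactly the content of Lemma~\ref{lem:P7>=P6}. Throughout I would invoke Lemma~\ref{lem:var>0_1} together with the elementary estimates already recorded in the proof of Lemma~\ref{lem:P7>=P6} (and the standing hypothesis $d\le\frac{2}{33}$, in particular $d<\frac15$) to guarantee that $e_0$, $e$, $f$, $e_0-d$, $e+e_0-x$, $g_0+e-x$, $e+f-y'$, $e_0+e+f-y'-x=e+e_0-x+f-y'$ and $g_0+e+f-y'-x$ are all strictly positive, that $x-e\ge 0$ and $x-e_0\ge 0$, and that every ramp term below is nonnegative.

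The first step is a bookkeeping regrouping: write $W^{(7)}_G=e_0\big(A_1B_1+A_2B_2\big)$ with $A_1=\frac{g_0^2}{g_0+e-x}+\frac{g_0^2}{g_0+e+f-y'-x}$, $B_1=\big(\frac{1}{e+e_0-x}+\frac{1}{g_0}\big)\frac{(e_0-e)^+}{ee_0}+\frac{(g_0-e-e_0+x)^+}{e(e+e_0-x)g_0}$, $A_2=\frac{g_0^2(g_0+e-x)}{(g_0+e+f-y'-x)(e_0+e+f-y'-x)}$ and $B_2=\frac{(e_0-e)^+}{(e+e_0-x)ee_0}+\frac{3(f-e)^+}{(e+f-y')ef}+\frac{x-e_0}{e^2(e+e_0-x)}$, noting that $A_1,B_1,A_2,B_2\ge 0$ and that $W^{(8)}_G=e_0\big(A_1'B_1'+A_2'B_2\big)$, where $A_1'$ and $A_2'=\frac{e_0^2(e_0+e-x)}{(e_0+e+f-y'-x)^2}$ are obtained from $A_1,A_2$ by replacing $g_0$ with $e_0$, and $B_1'=\big(\frac{1}{e+e_0-x}+\frac{1}{e_0-d}\big)\frac{(e_0-e)^+}{ee_0}+\frac{x-e}{e(e+e_0-x)e_0}$. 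Since all factors are nonnegative, it then suffices to prove the three pointwise inequalities $A_1\le A_1'$, $B_1\le B_1'$ and $A_2\le A_2'$.

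For $A_1\le A_1'$ I would use that $t\mapsto\frac{t^2}{t+c}$ has derivative $\frac{t(t+2c)}{(t+c)^2}$, hence is nondecreasing wherever $t>0$ and $t+2c>0$; the constraint bounds give $g_0+2(e-x)\ge 1-5d>0$ and $g_0+2(e+f-y'-x)\ge 1-7d>0$, so $g_0\le e_0$ upgrades each summand. For $B_1\le B_1'$ I would use $\frac1{g_0}\le\frac1{e_0-d}$ (as $0<e_0-d\le g_0$) and $\frac{(g_0-e-e_0+x)^+}{g_0}=\big(1-\frac{e+e_0-x}{g_0}\big)^+\le 1-\frac{e+e_0-x}{e_0}=\frac{x-e}{e_0}$, valid since $g_0\le e_0$, $e+e_0-x>0$ and $x-e\ge 0$. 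The genuinely delicate step is $A_2\le A_2'$: after cancelling the common positive factor $e_0+e+f-y'-x$ this is the monotonicity statement $\varphi(g_0)\le\varphi(e_0)$ for $\varphi(t)=\frac{t^2(t+c_1)}{t+c_2}$ on $[e_0-d,e_0]$, where $c_1=e-x$ and $c_2=e+f-y'-x=c_1-(y'-f)$; because $c_2<c_1$ the product is not monotone for formal reasons, so one must compute $\varphi'(t)=\frac{t\,[\,2(t+c_1)(t+c_2)-(y'-f)t\,]}{(t+c_2)^2}$ and bound the bracket below. Using $t+c_1=g_0+e-x\ge 1-4d$, $t+c_2=g_0+e+f-y'-x\ge 1-5d$, $y'-f\le d$ and $t=g_0\le 1$, it suffices that $2(1-4d)(1-5d)\ge d$, i.e. $40d^2-19d+2\ge 0$, which holds for all $d\le\frac{2}{33}$ (the relevant root being $\frac{19-\sqrt{41}}{80}\approx 0.157$) but fails near $d=\frac15$; this is precisely where the hypothesis $\delta(G)\ge(1-\frac{2}{33})n$ is consumed rather than merely $\delta(G)>\frac45 n$. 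Thus $\varphi$ is nondecreasing on $[e_0-d,e_0]$, $A_2\le A_2'$, and combining the three inequalities yields $W^{(7)}_G\le W^{(8)}_G$ pointwise, which finishes the proof. The main obstacle is exactly this last step: spotting the reduction to $\varphi$ and recognizing that only a quantitative estimate (with the threshold on $d$ genuinely entering) closes it.
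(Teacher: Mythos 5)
Your proof is correct and shares the same overall skeleton as the paper's: both write $W^{(7)}_G=e_0(A_1B_1+A_2B_2)$, $W^{(8)}_G=e_0(A_1'B_1'+A_2'B_2)$, check nonnegativity of every factor, and then prove the three pointwise inequalities $A_1\le A_1'$, $B_1\le B_1'$, $A_2\le A_2'$. Your arguments for $A_1\le A_1'$ (monotonicity of $t\mapsto\frac{t^2}{t+c}$ via $t+2c>0$) and for $B_1\le B_1'$ (using $\frac1{g_0}\le\frac1{e_0-d}$ and $\bigl(1-\tfrac{e+e_0-x}{g_0}\bigr)^+\le\tfrac{x-e}{e_0}$) coincide with the paper's.

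The genuine difference is the step $A_2\le A_2'$. The paper does \emph{not} introduce the new rational function $\varphi(t)=\frac{t^2(t+c_1)}{t+c_2}$. Instead, it factors $A_2=g_2(g_0)\cdot\frac{g_0+e-x}{e_0+e+f-y'-x}$, where $g_2(s)=\frac{s^2}{s+e+f-y'-x}$ is exactly the function whose monotonicity on $[e_0-d,e_0]$ was already established for the $A_1$ step, and the remaining factor $\frac{g_0+e-x}{e_0+e+f-y'-x}$ is increasing in $g_0$ with a fixed positive denominator. Both factors are nonnegative and increasing in $g_0$, so their product is bounded by its value at $g_0=e_0$, which is $A_2'$. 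This reuses the $g_2$-monotonicity with no extra computation and requires only $d<\tfrac17$. Your route — computing $\varphi'(t)=\frac{t\,[2(t+c_1)(t+c_2)-(y'-f)t]}{(t+c_2)^2}$ and verifying $2(1-4d)(1-5d)\ge d$, i.e.\ $40d^2-19d+2\ge0$ — is valid (the relevant root is $\frac{19-\sqrt{41}}{80}\approx0.157$, well above $\tfrac{2}{33}$), but more laborious.

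One side remark to correct: the claim that the $A_2$ step is ``precisely where the hypothesis $\delta(G)\ge(1-\frac{2}{33})n$ is consumed'' is a misattribution. Neither the paper's version nor yours of $A_2\le A_2'$ is the binding constraint in this lemma: your bound holds for $d\le0.157$, the paper's for $d<\tfrac17\approx0.143$, and the $g_2$-monotonicity used in $A_1$ (in both proofs) already forces $d<\tfrac17$. The threshold $d\le\frac{2}{33}$ is genuinely needed only at the final step of Section 4, where $W^{(13)}_G\le1$ is verified by analyzing the explicit polynomial $W(d)$.
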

\begin{proof}
Lemma \ref{lem:var>0_1} implies that $e_0,e,f,g_0$, $e+e_0-x>0$, $g_0+e-x>0$ and $e+f-y'>0$.

Let $g_1(s)=\frac{s^2}{s+e-x}$ and $g_2(s)=\frac{s^2}{s+e+f-y'-x}$. The derived functions of $g_1(s)$ and $g_2(s)$ are
$$g'_1(s)=\frac{s^2-2s(x-e)}{(s+e-x)^2}\ \ \text{and}\ \ g'_2(s)=\frac{s^2-2s(x+y'-e-f)}{(s+e+f-x-y')^2},$$
respectively. By the domain of (P7), $e_0-d+2(x-e)=e_0+2e-2x-d\geq x+2y'-2-2d\geq1-5d>0$ for $d<\frac{1}{5}$. Thus $g'_1(s)>0$ for any $s \in[e_0-d,e_0]$. That is, $g_1(s)$ is a monotone increasing function. So for any $s \in[e_0-d,e_0]$, $$g_1(s)\leq g_1(e_0).$$
Similarly, by the domain of (P7), $e_0-d-2(x+y'-e-f)=e_0+2e+2f-d-2x-2y'\geq x+2y'-2-4d\geq 1-7d>0$ for $d<\frac{1}{7}$. Therefore $g'_2(s)>0$ for any $s \in[e_0-d,e_0]$. That is, $g_2(s)$ is a monotone increasing function. So for any $s \in[e_0-d,e_0]$, $$g_2(s)\leq g_2(e_0).$$
Since $g_0+e-x>0$ by Lemma \ref{lem:var>0_1} and $g_0+e+f-y'-x\geq e_0+y'-1-2d\geq x+y'-1-3d\geq 1-5d>0$ because of $d<\frac{1}{5}$,
$$0<\frac{g_0^2}{g_0+e-x}+\frac{g_0^2}{g_0+e+f-y'-x}\leq\frac{e_0^2}{e_0+e-x}+\frac{e_0^2}{e_0+e+f-y'-x}.$$

Since $g_0 \in[e_0-d,e_0]$, $0<\frac{1}{g_0}\leq\frac{1}{e_0-d}$, and so $(\frac{1}{e+e_0-x}+\frac{1}{e_0-d})\frac{(e_0-e)^+}{ee_0}\geq (\frac{1}{e+e_0-x}+\frac{1}{g_0})\frac{(e_0-e)^+}{ee_0}\geq0$.

Claim that $$\frac{(g_0-e-e_0+x)^+}{e(e+e_0-x)g_0}\leq \frac{x-e}{e(e+e_0-x)e_0}.$$
Indeed, if $g_0\leq e+e_0-x$, then $\frac{(g_0-e-e_0+x)^+}{e(e+e_0-x)g_0}=0\leq \frac{x-e}{e(e+e_0-x)e_0}$. Suppose that $g_0>e+e_0-x$.
We know that $\frac{(g_0-e-e_0+x)^+}{e(e+e_0-x)g_0}=\frac{1}{e}(\frac{1}{e+e_0-x}-\frac{1}{g_0})\leq \frac{1}{e}(\frac{1}{e+e_0-x}-\frac{1}{e_0})=\frac{x-e}{e(e+e_0-x)e_0}.$

Note that $g_0+e+f-y'-x\geq e_0+y'-1-2d\geq x+y'-1-3d\geq 1-5d>0$ because of $d<\frac{1}{5}$. Since $g_0 \in[e_0-d,e_0]$, $g_0+e-x\leq e_0+e-x$ and $e_0+e-x+f-y'\geq g_0+e+f-y'-x>0$. By $g_2(s)\leq g_2(e_0)$ for any $s \in[e_0-d,e_0]$,
$$\frac{e_0^2(e_0+e-x)}{(e_0+e+f-y'-x)^2}\geq\frac{g_0^2(g_0+e-x)}{(g_0+e+f-y'-x)(e+e_0-x+f-y')}>0.$$
By $x\geq e_0$, Lemma \ref{lem:var>0_1} and the definition of ramp functions, $\frac{(e_0-e)^+}{(e+e_0-x)ee_0}+\frac{3(f-e)^+}{(e+f-y')ef}+\frac{x-e_0}{e^2(e+e_0-x)}\geq0$.

To sum up,  $W^{(8)}_G\geq W^{(7)}_G$. Because the constraints of (P8) are a subset of the constraints of (P7), OPT(P8) $\geq$ OPT(P7). By Lemma \ref{lem:P7>=P6}, OPT(P8) $\geq$ OPT(P7) $\geq$ OPT(P1).
\end{proof}

We now proceed with reducing $e$ as follows.

\begin{Lemma}\label{lem:P8_maximum}
The maximum of (P8) is achieved when $e=x+y'-1$ holds.
\end{Lemma}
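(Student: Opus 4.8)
## Proof Plan for Lemma \ref{lem:P8_maximum}

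The plan is to prove the stronger statement that, with $x,y',e_0,f$ held fixed, the objective $W^{(8)}_G$ is a non-increasing function of $e$ on the interval $[x+y'-1,x]$. Granting this, if $P_0$ is a maximizer of (P8) (one exists by Theorem~\ref{thm:Weierstrass_thm}), then the point $P'_0$ obtained from $P_0$ by replacing its $e$-coordinate with $x+y'-1$ is still feasible — the constraints on $x,y',e_0,f$ do not involve $e$, and $x+y'-1\in[x+y'-1,x]$ since $y'\leq 1$ — and $W^{(8)}_G(P'_0)\geq W^{(8)}_G(P_0)$, so the maximum is attained with $e=x+y'-1$.

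First I would record the sign information valid throughout the domain of (P8), which also guarantees $W^{(8)}_G$ is well-defined: by Lemma~\ref{lem:var>0_1} together with the estimates already used in the proof of Lemma~\ref{lem:P8>=P7} one has $e_0,e,f,e_0-d>0$, $e+e_0-x>0$, $e+f-y'>0$ and $e_0+e+f-y'-x>0$ (this last one using $d<\frac{1}{4}$), while $x-e\geq 0$ and $x-e_0\geq 0$. Write $A=A(e)=e_0+e-x$ and $B=B(e)=e_0+e+f-y'-x$; as functions of $e$ alone, both are positive and strictly increasing, and $B\leq A$ because $f\leq y'$.

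Next I would split $W^{(8)}_G=\Phi_1+\Phi_2$ into its two displayed summands and verify that each is a sum of products of nonnegative non-increasing functions of $e$; since such functions are closed under products and sums, this shows $W^{(8)}_G$ is non-increasing in $e$. In $\Phi_1$ the prefactor $\frac{e_0^2}{A}+\frac{e_0^2}{B}$ is positive and non-increasing, and the bracket $\bigl(\frac1A+\frac1{e_0-d}\bigr)\frac{(e_0-e)^+}{e}+\frac{x-e}{eA}$ is a sum of nonnegative non-increasing terms, because $(e_0-e)^+$ and $x-e$ are nonnegative and non-increasing while $A$, $e$ and $e_0-d$ are positive and non-decreasing. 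In $\Phi_2$ the prefactor equals $e_0^3\cdot\frac{A}{B^2}$, which is positive and non-increasing since $\frac{d}{de}\frac{A}{B^2}=\frac{B-2A}{B^3}\leq\frac{-A}{B^3}<0$ by $B\leq A$; and the remaining bracket $\frac{(e_0-e)^+}{Aee_0}+\frac{3(f-e)^+}{(e+f-y')ef}+\frac{x-e_0}{e^2A}$ is once more a sum of nonnegative non-increasing terms (each numerator is nonnegative and non-increasing — the last even constant in $e$ — and each denominator is a product of positive non-decreasing quantities).

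I expect the only subtle point to be noticing $B\leq A$, which is exactly what forces the prefactor $A/B^2$ of $\Phi_2$ to be non-increasing rather than possibly increasing in $e$; the rest is routine sign-checking of the individual terms, entirely parallel to the manipulations already carried out in the proofs of Lemmas~\ref{lem:P5_maximum} and~\ref{lem:P8>=P7}.
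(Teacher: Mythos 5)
Your proof is correct and follows essentially the same route as the paper's: the paper also fixes $x,y',e_0,f$ and compares $W^{(8)}_G$ at $e$ against $e'=x+y'-1$ term by term, verifying the same sign and monotonicity facts for each factor (including the key observation that $\frac{e_0+e-x}{(e_0+e+f-y'-x)^2}$ decreases in $e$ because $f\leq y'$). You package this as ``each summand is a product of nonnegative non-increasing functions'' and use a derivative for the $A/B^2$ factor where the paper argues algebraically, but these are cosmetic differences, not a different method.
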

\begin{proof}
Let $P_0=(x,y',e_0,e,f)$ be a point that achieves the maximum of (P8). Let $e'=x+y'-1$ and $P'_0=(x,y',e_0,e',f)$. Then we know that $P'_0$ is in the domain of (P8).

It suffices to prove $W^{(8)}_G(P'_0)\geq W^{(8)}_G(P_0)$ in the following.
It follows from  Lemma \ref{lem:var>0_1} that $e_0,f>0$ and $e\geq e'=x+y'-1>0$.

Since $e_0+e'-x=e_0+y'-1>0$ and $e_0+e'+f-y'-x=e_0+f-1>0$ by Lemma \ref{lem:N(S)_3},
%$$\frac{e_0^2}{e_0+e-x}+\frac{e_0^2}{e_0+e+f-y'-x}\leq \frac{e_0^2}{e_0+y'-1}+\frac{e_0^2}{e_0+f-1}\ \ \text{and }\ \ \frac{1}{e_0+e-x}\leq \frac{1}{e_0+y'-1}.$$
$$0<\frac{e_0^2}{e_0+e-x}+\frac{e_0^2}{e_0+e+f-y'-x}\leq \frac{e_0^2}{e_0+e'-x}+\frac{e_0^2}{e_0+e'+f-y'-x}.$$
Since $e_0-d\geq x-2d\geq1-3d>0$ for $d<\frac{1}{5}$, $$0<\frac{1}{e_0+e-x}+\frac{1}{e_0-d}\leq \frac{1}{e_0+e'-x}+\frac{1}{e_0-d}.$$

Claim that
\begin{align}\label{ineqn:e_1}
0\leq\frac{(e_0-e)^+}{e}\leq \frac{(e_0-e')^+}{e'}.
\end{align}
%$$\frac{(e_0-e)^+}{e}\leq \frac{(e_0-e')^+}{e'}.$$
Indeed, if $e_0\leq e$, then $\frac{(e_0-e)^+}{e}=0\leq \frac{(e_0-e')^+}{e'}$ since $e_0-e'=e_0-x-y'+1\geq 1-d-y'\geq0$. If $e_0>e$, then $0<\frac{(e_0-e)^+}{e}=\frac{e_0}{e}-1\leq \frac{e_0}{e'}-1=\frac{(e_0-e')^+}{e'}$ by $e\geq e'$.

By $e\geq e'$, we have $(x-e)e_0\leq(x-e')e_0$, and so $\frac{x-e}{e+e_0-x}\leq \frac{x-e'}{e'+e_0-x}$. Since $e\in[x+y'-1,x]$, $\frac{x-e}{e+e_0-x}\geq0$. Then
$$0\leq\frac{x-e}{e(e+e_0-x)}\leq \frac{x-e'}{e'(e'+e_0-x)}.$$
Since $y'\geq f$ and $e\geq e'$, $(f-y')(e_0+e-x)\leq(f-y')(e_0+e'-x)$. So we know $0<\frac{e_0+e-x}{e_0+e+f-y'-x}\leq \frac{e_0+e'-x}{e_0+e'+f-y'-x}$. Therefore,
$$0<\frac{(e_0+e-x)e_0^3}{(e_0+e+f-y'-x)^2}\leq \frac{(e_0+e'-x)e_0^3}{(e_0+e'+f-y'-x)^2}.$$
By (\ref{ineqn:e_1}) and $e\geq e'$, $0\leq\frac{(e_0-e)^+}{(e+e_0-x)ee_0}\leq \frac{(e_0-e')^+}{(e'+e_0-x)e'e_0}$.

Claim that $$0\leq\frac{3(f-e)^+}{(e+f-y')ef}\leq \frac{3(f-e)^+}{(e'+f-y')e'f}.$$
Indeed, since $e'+f-y'=x+f-1\geq 1-3d>0$ for $d<\frac{1}{5}$, $\frac{3}{(e+f-y')}\leq \frac{3}{(e'+f-y')}$. If $f\leq e$, then $\frac{(f-e)^+}{ef}=0\leq \frac{(f-e')^+}{e'f}$. If $f>e$, then $\frac{(f-e)^+}{ef}=\frac{1}{e}-\frac{1}{f}\leq\frac{1}{e'}-\frac{1}{f}=\frac{(f-e')^+}{e'f}$. Therefore, $\frac{3(f-e)^+}{(e+f-y')ef}\leq \frac{3(f-e)^+}{(e'+f-y')e'f}.$

Since $e\geq e'$ and $x\geq e_0$, $0\leq\frac{x-e_0}{e^2(e+e_0-x)}\leq \frac{x-e_0}{e'^2(e'+e_0-x)}$. To sum up, $W^{(8)}_G(P'_0)\geq W^{(8)}_G(P_0)$.
\end{proof}

Thus we may replace (P8) with a new programming (P9) whose maximum value is at least that of (P8) in the following.
Let
\begin{align*}
\widehat{W}^{(9)}_G
=(&\frac{e_0^2}{e_0+y'-1}+\frac{e_0^2}{e_0+f-1})[(\frac{1}{e_0+y'-1}+\frac{1}{e_0-d})\frac{(e_0-x-y'+1)^+}{x+y'-1}+\\
&\frac{1-y'}{(x+y'-1)(e_0+y'-1)}]+\frac{(e_0+y'-1)e_0^3}{(e_0+f-1)^2}[\frac{(e_0-x-y'+1)^+}{(e_0+y'-1)(x+y'-1)e_0}+\\
&\frac{3(f-x-y'+1)^+}{(x+f-1)(x+y'-1)f}+\frac{x-e_0}{(x+y'-1)^2(e_0+y'-1)}].
\end{align*}
Here is the new programming:
\begin{align*}
&\text{(P9): maximize } \widehat{W}^{(9)}_G \\
& \quad\text{s.t.}\\
%& \quad\text{s.t. for all } 1\leq i \leq R_0 \\
& \qquad\text{I. Degree constraints:} & x \in [1 - d, 1], \\
                                    & & y' \in [1 - d, 1], \\
& \qquad\text{II. Triangle constraints:} & e_0 \in [x - d, x], \\
                                       & & f \in [y' - d, y'].
\end{align*}
\begin{Corollary}\label{cor:P9>=P8}
 {\rm OPT(P1)$\leq$ OPT(P8)= OPT(P9)}.
\end{Corollary}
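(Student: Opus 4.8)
The plan is to assemble the corollary from the two statements immediately preceding it, so almost nothing new is needed. The inequality OPT(P1) $\leq$ OPT(P8) is already contained in Lemma~\ref{lem:P8>=P7}, which in fact gives OPT(P8) $\geq$ OPT(P7) $\geq$ OPT(P1); hence the only point that requires an argument is the equality OPT(P8) $=$ OPT(P9).

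First I would record the purely algebraic fact that $\widehat{W}^{(9)}_G$ is exactly the objective $W^{(8)}_G$ after the substitution $e=x+y'-1$. Under this substitution one has $e+e_0-x=e_0+e-x=e_0+y'-1$, $e_0+e+f-y'-x=e_0+f-1$, $x-e=1-y'$, $e+f-y'=x+f-1$, $(e_0-e)^+=(e_0-x-y'+1)^+$ and $(f-e)^+=(f-x-y'+1)^+$; plugging these into the displayed formula for $W^{(8)}_G$ yields precisely the displayed formula for $\widehat{W}^{(9)}_G$. I would state this verification but not belabour it, as it is routine substitution.

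Next I would establish the two inequalities separately. For OPT(P8) $\geq$ OPT(P9): every feasible point $(x,y',e_0,f)$ of (P9) extends to a feasible point $(x,y',e_0,e,f)$ of (P8) by setting $e=x+y'-1$, which lies in the required interval $[x+y'-1,x]$ since $y'\leq 1$; by the algebraic fact the objective value is unchanged, so the maximum over the feasible region of (P8) is at least that over (P9). For OPT(P9) $\geq$ OPT(P8): by Lemma~\ref{lem:P8_maximum}, (P8) attains its maximum at some point with $e=x+y'-1$, and deleting the now-redundant coordinate $e$ from that point yields a feasible point of (P9) with objective value OPT(P8). Combining gives OPT(P8) $=$ OPT(P9), and together with Lemma~\ref{lem:P8>=P7} this is the claim.

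I expect no real obstacle here: the content has already been packed into Lemmas~\ref{lem:P8>=P7} and~\ref{lem:P8_maximum}. The only mildly delicate point is to confirm that the substitution $e=x+y'-1$ respects all the constraints of (P8) (it does, trivially, since $x+y'-1\in[x+y'-1,x]$) and that $W^{(8)}_G$ and $\widehat{W}^{(9)}_G$ agree identically after it, so that passing to the reduced program (P9) changes neither the set of attainable objective values nor the optimum.
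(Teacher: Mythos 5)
Your proof is correct and follows essentially the same route as the paper, which simply cites Lemmas~\ref{lem:P8_maximum} and~\ref{lem:P8>=P7}; you have merely spelled out the mechanics --- that $\widehat{W}^{(9)}_G$ is $W^{(8)}_G$ under the substitution $e=x+y'-1$, and that this substitution preserves feasibility in both directions --- which the paper leaves implicit.
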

\begin{proof}
%By Lemmas \ref{lem:P8_maximum} and \ref{lem:P8>=P7}, the desired result is obtained.
Combining Lemmas \ref{lem:P8_maximum} and \ref{lem:P8>=P7}, we complete the proof.
\end{proof}

\subsection{Proof of Theorem \ref{thm:W'_G(O)<=1}}\label{sec:proof_3.5}

Before proceeding, it will be useful to switch the variables. To that end we introduce two new variables $a$, $b$ to replace $f$, $e_0$ respectively as follows:
\begin{itemize}
  \item $e_0 = x-a$,
  \item $f = y'-b$.
\end{itemize}
We rewrite the objective function $\widehat{W}^{(9)}_G$ of the programming (P9).
\begin{align*}
W^{(9)}_G
&=[\frac{(x-a)^2}{x-a+y'-1}+\frac{(x-a)^2}{x-a+y'-b-1}][(\frac{1}{x-a+y'-1}+\frac{1}{x-a-d})\frac{(1-y'-a)^+}{x+y'-1}+\\
&\frac{1-y'}{(x+y'-1)(x-a+y'-1)}]+\frac{(x-a+y'-1)(x-a)^3}{(x+y'-a-b-1)^2}[\frac{(1-y'-a)^+}{(x-a+y'-1)(x+y'-1)(x-a)}\\
&+\frac{3(1-x-b)^+}{(x+y'-b-1)(x+y'-1)(y'-b)}+\frac{a}{(x+y'-1)^2(x-a+y'-1)}]\\
&=[\frac{(x-a)^2}{x-a+y'-1}+\frac{(x-a)^2}{x-a+y'-b-1}][\frac{(1-y'-a)^+ +1-y'}{(x-a+y'-1)(x+y'-1)}+\frac{(1-y'-a)^+}{(x-a-d)(x+y'-1)}]\\
&+\frac{(x-a)^2(1-y'-a)^+}{(x+y'-1)(x+y'-a-b-1)^2}+\frac{a(x-a)^3}{(x+y'-a-b-1)^2(x+y'-1)^2}\\
&+\frac{3(x-a+y'-1)(x-a)^3(1-x-b)^+}{(x+y'-b-1)(x+y'-1)(y'-b)(x+y'-a-b-1)^2}].
\end{align*}
Here then is programming (P9) with these new variables and constraints:
\begin{align*}
&\text{(P9): maximize } W^{(9)}_G \\
& \quad\text{s.t.}\\
%& \quad\text{s.t. for all } 1\leq i \leq R_0 \\
& \qquad\text{I. Degree constraints:} & x \in [1 - d, 1], \\
                                    & & y' \in [1 - d, 1], \\
& \qquad\text{II. Triangle constraints:} & a \in [0,d], \\
                                       & & b \in [0,d].
\end{align*}

\begin{Lemma}\label{lem:P9_maximum}
The maximum of (P9) is achieved when $y'=1-d$ holds.
\end{Lemma}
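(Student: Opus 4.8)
The plan is to show that the objective function $W^{(9)}_G$, viewed as a function of $y'$ on the interval $[1-d,1]$ with $x,a,b$ fixed, is monotone decreasing, so that its maximum over the box is attained at the left endpoint $y'=1-d$. Concretely, I would fix $x\in[1-d,1]$, $a\in[0,d]$, $b\in[0,d]$ and write $W^{(9)}_G = W^{(9)}_G(y')$, then argue term by term that each summand is non-increasing in $y'$ (all summands being non-negative in the domain, as guaranteed by Lemma \ref{lem:var>0_1} together with the $d\le\frac{2}{33}<\frac15$ hypothesis in force throughout this section). Since a sum of non-increasing non-negative functions is non-increasing, the claim follows.

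**First I would** isolate the elementary building blocks that appear repeatedly: the quantities $x-a$ (free of $y'$), $x-a+y'-1$, $x-a+y'-b-1$, $x+y'-1$, $x+y'-b-1$, $y'-b$, $x+y'-a-b-1$, $x-a-d$ (free of $y'$), and the two ramp terms $(1-y'-a)^+$ and $(1-x-b)^+$ (the latter free of $y'$). Each of the $y'$-dependent linear denominators is an increasing function of $y'$ that is strictly positive on the domain — positivity of $x-a+y'-1=e_0+y'-1$, $x+y'-1$, $x-a+y'-b-1=e_0+f-1$, $x+y'-b-1=x+f-1$, $x+y'-a-b-1=e_0+f-1$ (same as a previous one after substitution), $y'-b=f$, being exactly the sort of bound established via Lemma \ref{lem:N(S)_3} and the explicit $1-cd>0$ estimates used repeatedly in the proofs of Lemmas \ref{lem:P7>=P6}–\ref{lem:P8_maximum}. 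So each reciprocal $\frac{1}{(\cdot)}$ of such a denominator is decreasing in $y'$, the ramp function $(1-y'-a)^+$ is non-increasing in $y'$ (it is $1-y'-a$ where positive, $0$ elsewhere, hence non-increasing), and the numerator $1-y'$ is decreasing in $y'$. The factors $\frac{(x-a)^2}{x-a+y'-1}$, $\frac{(x-a)^2}{x-a+y'-b-1}$, $\frac{(x-a+y'-1)(x-a)^3}{(x+y'-a-b-1)^2}$ etc. need a little more care: the first two are manifestly decreasing, and for a factor of the shape $\frac{s+c}{(s+c')^2}$ with $s=y'$ one checks the derivative has sign $-(s+2c'-c)$, which is negative on the domain because the relevant $1-cd>0$ bound (here $x+2f-1-a-\cdots\ge 1-O(d)>0$) holds for $d\le\frac{2}{33}$. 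Assembling these, I would go through the three bracketed groups of $W^{(9)}_G$ and verify each product and sum is a non-increasing non-negative function of $y'$.

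**The hard part will be** the middle group — the second line of the displayed formula for $W^{(9)}_G$ — and more precisely the factor $\frac{(x-a+y'-1)(x-a)^3}{(x+y'-a-b-1)^2}$ multiplying the bracket. Here $y'$ appears with a plus sign in the numerator as well as (squared) in the denominator, so monotonicity is not immediate and one must compute a derivative and reduce to a linear inequality of the form $y'$ versus some combination of $x,a,b,d$; the resulting inequality should be of the type "$\ge 1 - ($small multiple of $d)$", which holds on $[1-d,1]$ precisely because $d\le\frac{2}{33}$, matching the numerical thresholds ($d<\tfrac15$, $d<\tfrac17$, etc.) that recur throughout this section. I would also need to double-check the term $\frac{3(x-a+y'-1)(x-a)^3(1-x-b)^+}{(x+y'-b-1)(x+y'-1)(y'-b)(x+y'-a-b-1)^2}$: here both $(x-a+y'-1)$ in the numerator and the four $y'$-growing denominators $(x+y'-b-1)$, $(x+y'-1)$, $(y'-b)$, $(x+y'-a-b-1)^2$ compete, so after cancelling one power of a linear factor the remaining expression should again be seen to be decreasing, and I would again reduce to a linear-in-$y'$ inequality that is valid for $d\le\frac{2}{33}$. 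Once every summand is shown non-increasing and non-negative, we conclude $W^{(9)}_G(y')\le W^{(9)}_G(1-d)$, i.e. the maximum of (P9) over the full box is achieved at $y'=1-d$, and replacing $y'$ by $1-d$ everywhere yields the reduced programming to be treated in the next step.
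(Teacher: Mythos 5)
Your proposal takes essentially the same route as the paper: both reduce the lemma to showing, term by term, that the objective $W^{(9)}_G$ does not decrease when $y'$ is pushed down to $1-d$ with $x,a,b$ fixed. The paper does this by direct algebraic comparison of each factor at $y'$ versus at $y''=1-d$ (using $-b(x-a+y'-1)\le -b(x-a+y''-1)$ etc.) rather than computing derivatives, but the decomposition into the same building blocks and the same positivity/monotonicity observations are identical. One small slip: for $\frac{s+c}{(s+c')^2}$ the numerator of the derivative is $-(s+c')(s+2c-c')$, so the relevant sign condition is $s+2c-c'>0$, not $s+2c'-c>0$; fortunately both linear quantities reduce to something of the form $1-O(d)$ and are positive on the domain for $d\le \frac{2}{33}$, so the conclusion is unaffected.
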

\begin{proof}
Let $P_0=(x,y',a,b)$ be a point that achieves the maximum of (P9). Let $y''=1-d$ and $P'_0=(x,y'',a,b)$. Then we know that $P'_0$ is in the domain of (P9).

It suffices to prove $W^{(9)}_G(P'_0)\geq W^{(9)}_G(P_0)$ in the following. Since $P_0$ is in the domain of (P9), $x+y''-a-b-1=x-a-b-d\geq1-4d>0$ for $d<\frac{1}{5}$, and so
\begin{align}\label{ineqn:y_1}
0<\frac{1}{x-a+y'-b-1}\leq\frac{1}{x-a+y''-b-1}.
\end{align}
Note that $x-y''-1\geq x+y''-a-1\geq x+y''-a-b-1>0$. So we have
\begin{align}\label{ineqn:y_2}
0<\frac{1}{x-a+y'-1}\leq \frac{1}{x-a+y''-1}\ \ \text{and}\ \ 0<\frac{1}{x+y'-1}\leq\frac{1}{x+y''-1}.
\end{align}
Since $x+y''-b-1\geq x+y''-a-b-1>0$,
\begin{align}\label{ineqn:y_3}
0<\frac{1}{x+y'-b-1}\leq\frac{1}{x+y''-b-1}.
\end{align}
Similarly, $y''-b\geq1-2d>0$ for $d<\frac{1}{5}$. Then
\begin{align}\label{ineqn:y_4}
0<\frac{1}{y'-b}\leq\frac{1}{y''-b}.
\end{align}

Claim that
\begin{align}\label{ineqn:y_5}
0<\frac{(1-y'-a)^+}{x+y'-1}\leq \frac{(1-y''-a)^+}{x+y''-1}.
\end{align}
Indeed, if $y'\geq 1-a$, then $\frac{(1-y'-a)^+}{x+y'-1}=0\leq\frac{(1-y''-a)^+}{x+y''-1}$. Suppose that $y'<1-a$ in the following. For $d<\frac{1}{5}$, $x-a\geq1-2d>0$, and so $\frac{x-a}{x+y'-1}\leq\frac{x-a}{x+y''-1}$. Then $\frac{(1-y'-a)^+}{x+y'-1}=\frac{1-y'-a}{x+y'-1}=\frac{x-a}{x+y'-1}-1\leq\frac{x-a}{x+y''-1}-1=\frac{1-y''-a}{x+y''-1}=\frac{(1-y''-a)^+}{x+y''-1}$.

By (\ref{ineqn:y_1}) and (\ref{ineqn:y_2}), $0\leq\frac{(x-a)^2}{x-a+y'-1}+\frac{(x-a)^2}{x-a+y'-b-1}\leq\frac{(x-a)^2}{x-a+y''-1}+\frac{(x-a)^2}{x-a+y''-b-1}.$
By (\ref{ineqn:y_2}) and (\ref{ineqn:y_5}), $0<\frac{(1-y'-a)^+}{(x-a+y'-1)(x+y'-1)}\leq\frac{(1-y''-a)^+}{(x-a+y''-1)(x+y''-1)}.$
Since $1-y'=d\geq0$, $0\leq\frac{1-y'}{x+y'-1}=\frac{x}{x+y'-1}-1\leq \frac{x}{x+y''-1}-1=\frac{1-y''}{x+y''-1}$.
It follows from (\ref{ineqn:y_2}) that $0<\frac{1-y'}{(x-a+y'-1)(x+y'-1)}\leq\frac{1-y''}{(x-a+y''-1)(x+y''-1)}.$
Therefore, $0<\frac{(1-y'-a)^+ +1-y'}{(x-a+y'-1)(x+y'-1)}\leq\frac{(1-y''-a)^+ +1-y''}{(x-a+y''-1)(x+y''-1)}.$

Since $x-a-d\geq x-a-b-d>0$, $0\leq\frac{(1-y'-a)^+}{(x-a-d)(x+y'-1)}\leq\frac{(1-y''-a)^+}{(x-a-d)(x+y''-1)}$ by (\ref{ineqn:y_5}).
It follows from (\ref{ineqn:y_1}) and (\ref{ineqn:y_5}) that $0\leq\frac{(x-a)^2(1-y'-a)^+}{(x+y'-1)(x+y'-a-b-1)^2}\leq\frac{(x-a)^2(1-y''-a)^+}{(x+y''-1)(x+y''-a-b-1)^2}$.
Since $a\geq0$ and $x-a\geq1-2d>0$ for $d<\frac{1}{2}$, $0\leq\frac{a(x-a)^3}{(x+y'-a-b-1)^2(x+y'-1)^2}\leq\frac{a(x-a)^3}{(x+y''-a-b-1)^2(x+y''-1)^2}$ by (\ref{ineqn:y_1}) and (\ref{ineqn:y_2}).
Since $b\geq0$, $-b(x-a+y'-1)\leq-b(x-a+y''-1)$ and so $\frac{x-a+y'-1}{x-a+y'-b-1}\leq\frac{x-a+y''-1}{x-a+y''-b-1}.$
By (\ref{ineqn:y_1}), (\ref{ineqn:y_2}), (\ref{ineqn:y_3}) and (\ref{ineqn:y_4}), $0\leq\frac{1}{(x+y'-b-1)(x+y'-1)(y'-b)(x+y'-a-b-1)^2}\leq\frac{1}{(x+y''-b-1)(x+y''-1)(y''-b)(x+y''-a-b-1)^2}$,
and so $0\leq\frac{3(x-a+y'-1)(x-a)^3(1-x-b)^+}{(x+y'-b-1)(x+y'-1)(y'-b)(x+y'-a-b-1)^2}\leq\frac{3(x-a+y''-1)(x-a)^3(1-x-b)^+}{(x+y''-b-1)(x+y''-1)(y''-b)(x+y''-a-b-1)^2}$.

Therefore, $W^{(9)}_G(P'_0)\geq W^{(9)}_G(P_0)$. Then the desired conclusion is obtained.
\end{proof}

Thus we construct a new programming (P10) with the following new objective function and a subset of the previous constraints as follows:

\begin{align*}
W^{(10)}_G
=&[\frac{(x-a)^2}{x-a-d}+\frac{(x-a)^2}{x-a-d-b}]\frac{3d-2a}{(x-a-d)(x-d)}+\frac{(x-a)^2(d-a)}{(x-d)(x-d-a-b)^2}+\\
&\frac{a(x-a)^3}{(x-a-b-d)^2(x-d)^2}+\frac{3(x-a-d)(x-a)^3(1-x-b)^+}{(x-b-d)(x-d)(1-d-b)(x-a-b-d)^2}].
\end{align*}
Here is the new programming:
\begin{align*}
&\text{(P10): maximize } W^{(10)}_G \\
& \quad\text{s.t.}\\
%& \quad\text{s.t. for all } 1\leq i \leq R_0 \\
& \qquad\text{I. Degree constraints:} & x \in [1 - d, 1], \\
& \qquad\text{II. Triangle constraints:} & a \in [0,d], \\
                                       & & b \in [0,d].
\end{align*}

\begin{Corollary}\label{cor:P10>=P9}
 {\rm OPT(P1)$\leq$ OPT(P9)= OPT(P10)}.
\end{Corollary}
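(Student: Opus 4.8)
The plan is to follow the same two-step template already used for Corollaries \ref{cor:P9>=P8}, \ref{cor:P6>=P5} and \ref{cor:P5>=P4}: first establish the equality OPT(P9) $=$ OPT(P10) from the relevant reduction lemma, and then chain it with the already-proved inequality OPT(P1) $\leq$ OPT(P9).

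For the equality, I would invoke Lemma \ref{lem:P9_maximum}, which asserts that the maximum of (P9) is attained at a feasible point with $y' = 1-d$. Restricting (P9) to the slice $y' = 1-d$ deletes the variable $y'$ (its constraint $y' \in [1-d,1]$ becomes vacuous) and leaves exactly the constraints $x \in [1-d,1]$, $a \in [0,d]$, $b \in [0,d]$ of (P10). Substituting $y' = 1-d$ into $W^{(9)}_G$, and using $a,b \in [0,d]$ so that $(1-y'-a)^+ = (d-a)^+ = d-a$, collapses $W^{(9)}_G$ into precisely $W^{(10)}_G$; in particular, over the common denominator $(x-a-d)(x-d)$, the two $y'$-dependent numerators $(1-y'-a)^+ + (1-y')$ and $(1-y'-a)^+$ combine to give the factor $\frac{3d-2a}{(x-a-d)(x-d)}$ appearing in $W^{(10)}_G$, while $x+y'-1 = x-d$, $y'-b = 1-d-b$, and $x+y'-a-b-1 = x-a-b-d$ account for the remaining substitutions. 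Thus the restricted program is literally (P10), and since the (P9)-optimum already lies on this slice, OPT(P9) $=$ OPT(P10). Finally, Corollary \ref{cor:P9>=P8} gives OPT(P1) $\leq$ OPT(P8) $=$ OPT(P9), whence OPT(P1) $\leq$ OPT(P9) $=$ OPT(P10), as claimed.

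The only point requiring any attention is the algebraic verification that the specialization $y' = 1-d$ really turns $W^{(9)}_G$ into the displayed $W^{(10)}_G$, and this is pure bookkeeping rather than a genuine obstacle: since we are merely fixing one coordinate, every relation involved is an honest equality, not an estimate, so no inequalities need to be checked and the proof reduces to citing Lemma \ref{lem:P9_maximum} and Corollary \ref{cor:P9>=P8}.
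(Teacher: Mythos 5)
Your proposal is correct and follows the same two-step route as the paper: Lemma \ref{lem:P9_maximum} fixes the slice $y'=1-d$ on which the (P9)-optimum is attained, giving OPT(P9) $=$ OPT(P10), and Corollary \ref{cor:P9>=P8} supplies OPT(P1) $\leq$ OPT(P9). The paper states this in one line; you additionally spell out the (correct) algebraic verification that substituting $y'=1-d$ into $W^{(9)}_G$ yields $W^{(10)}_G$, including the collapse to the common denominator $(x-a-d)(x-d)$ that produces the numerator $3d-2a$.
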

\begin{proof}
By Corollary \ref{cor:P9>=P8} and Lemma \ref{lem:P9_maximum}, the desired conclusion holds.
\end{proof}

\begin{Lemma}\label{lem:P10_maximum}
The maximum of (P10) is achieved when $x=1-d$ holds.
\end{Lemma}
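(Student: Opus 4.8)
The plan is to follow the same template as the proofs of Lemmas~\ref{lem:P8_maximum} and~\ref{lem:P9_maximum}. Let $P_0=(x,a,b)$ be a point attaining the maximum of (P10), which exists by Theorem~\ref{thm:Weierstrass_thm}, and let $P_0'=(1-d,a,b)$. Because the only constraints on $a$ and $b$ in (P10) are $a\in[0,d]$ and $b\in[0,d]$, which do not involve $x$, the point $P_0'$ lies in the domain of (P10). Hence the lemma reduces to the one-variable statement: for every fixed $a,b\in[0,d]$, the map $x\mapsto W^{(10)}_G(x,a,b)$ is non-increasing on $[1-d,1]$; then $W^{(10)}_G(P_0')\ge W^{(10)}_G(P_0)$, so $P_0'$ also attains the maximum. (If $d=0$ the domain of (P10) is the single point $(1,0,0)$ and there is nothing to prove, so assume $0<d\le\tfrac{2}{33}$ from now on.)

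Next I would record the sign data used throughout. For $x\in[1-d,1]$ and $a,b\in[0,d]$, each of $x-d$, $x-a-d$, $x-b-d$, $x-a-b-d$ and $1-d-b$ is at least $1-4d\ge\tfrac{25}{33}>0$; moreover $3d-2a\ge d>0$, $d-a\ge 0$, $a\ge 0$ and $(1-x-b)^+\ge 0$. Consequently $W^{(10)}_G(x,a,b)$ is the sum of the five nonnegative terms
$$\Psi_{1a}=\tfrac{(3d-2a)(x-a)^2}{(x-a-d)^2(x-d)},\quad \Psi_{1b}=\tfrac{(3d-2a)(x-a)^2}{(x-a-d-b)(x-a-d)(x-d)},\quad \Psi_{2}=\tfrac{(d-a)(x-a)^2}{(x-d)(x-a-b-d)^2},$$
$$\Psi_{3}=\tfrac{a(x-a)^3}{(x-a-b-d)^2(x-d)^2},\qquad \Psi_{4}=\tfrac{3(x-a-d)(x-a)^3(1-x-b)^+}{(1-d-b)(x-b-d)(x-d)(x-a-b-d)^2},$$
and it suffices to check that each $\Psi_k$ is non-increasing on $[1-d,1]$.

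For $\Psi_{1a},\Psi_{1b},\Psi_{2},\Psi_{3}$ this follows at once from the logarithmic derivative. Each of these is a (nonnegative) constant times an expression $(x-a)^k/\prod_i(x-c_i)^{m_i}$ with $\sum_i m_i=k+1$, where every $c_i$ lies in $\{d,\,a+d,\,a+b+d\}$ and hence satisfies $c_i\ge a$ (using $a\le d$ and $b\ge 0$), so $x-c_i\le x-a$ and
$$\frac{d}{dx}\ln\Psi_k=\frac{k}{x-a}-\sum_i\frac{m_i}{x-c_i}\le\frac{k}{x-a}-\frac{k+1}{x-a}=-\frac{1}{x-a}<0 .$$
The one term that is not immediately monotone is $\Psi_4$, because of the ramp factor $(1-x-b)^+$ and the factor $x-a-d$ that \emph{increases} with $x$; controlling this is the only real (and still mild) obstacle. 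On the region $x<1-b$ one has $\Psi_4>0$ and
$$\frac{d}{dx}\ln\Psi_4=\underbrace{\Big(\tfrac{1}{x-a-d}-\tfrac{1}{x-a-b-d}\Big)}_{\le 0}+\underbrace{\Big(\tfrac{3}{x-a}-\tfrac{1}{x-b-d}-\tfrac{1}{x-d}-\tfrac{1}{x-a-b-d}\Big)}_{\le 0}-\frac{1}{1-x-b}<0,$$
the second bracket being $\le 0$ since each of $x-b-d$, $x-d$, $x-a-b-d$ is $\le x-a$ (as $a\le b+d$, $a\le d$, $b,d\ge 0$); on the region $x\ge 1-b$ we have $\Psi_4\equiv 0$, and $\Psi_4$ is continuous at $x=1-b$, so $\Psi_4$ is non-increasing on the whole interval $[1-d,1]$.

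Summing the five non-increasing terms, $x\mapsto W^{(10)}_G(x,a,b)$ is non-increasing on $[1-d,1]$, which completes the argument. The only points that take any care are the positivity of all the denominators, obtained from $d\le\tfrac{2}{33}$ exactly as in the rest of this section, and the single grouping $\tfrac{1}{x-a-d}-\tfrac{1}{x-a-b-d}\le 0$, which is what absorbs the ``wrong-direction'' factor $x-a-d$ appearing in $\Psi_4$.
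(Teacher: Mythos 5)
Your proof is correct. It follows the same overall plan as the paper (decompose the objective into nonnegative terms, then show each is monotone in $x$ so that $x=1-d$ is optimal), but the technical tool is genuinely different: where the paper cross-multiplies to establish a string of ratio inequalities \eqref{ineqn:x_1}--\eqref{ineqn:x_5} comparing a generic $x$ to $x'=1-d$ and then stitches them together term by term, you compute logarithmic derivatives and prove the stronger, cleaner claim that $x\mapsto W^{(10)}_G(x,a,b)$ is non-increasing on all of $[1-d,1]$. The key structural observation in your version --- that each of $\Psi_{1a},\Psi_{1b},\Psi_2,\Psi_3$ is a nonnegative constant times $(x-a)^k/\prod_i(x-c_i)^{m_i}$ with $\sum_i m_i=k+1$ and every $c_i\ge a$, so the log-derivative is at most $-1/(x-a)<0$ --- neatly unifies in one line what the paper handles via several ad hoc cross-multiplications, and your treatment of the ramp term $\Psi_4$ (splitting into the two regions $x<1-b$ and $x\ge 1-b$, grouping the extra ``wrong-direction'' factor $x-a-d$ with $x-a-b-d$, and gluing by continuity at $x=1-b$) correctly handles the only nontrivial case. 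One tiny cosmetic point: for $\Psi_2$ and $\Psi_3$ the prefactor ($d-a$, resp.\ $a$) may vanish, in which case ``$\frac{d}{dx}\ln\Psi_k$'' is not literally defined --- but then $\Psi_k\equiv 0$ and monotonicity is trivial, so nothing is lost. Both arguments yield the same conclusion; yours is arguably more systematic and makes the verification more transparent.
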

\begin{proof}
Let $P_0=(x,a,b)$ be a point that achieves the maximum of (P10). Let $x'=1-d$ and $P'_0=(x',a,b)$. Then we know that $P'_0$ is in the domain of (P10).

It suffices to prove $W^{(10)}_G(P'_0)\geq W^{(10)}_G(P_0)$. Note that $x\geq x'$. Since $x'-a-b-d\geq1-4d>0$ for $d<\frac{1}{5}$, $x'-d\geq x'-b-d\geq x'-a-b-d>0$ and $x'-a\geq x'-a-d\geq x'-a-b-d>0$.
Since $-d(x-a)\leq -d(x'-a)$,
\begin{align}\label{ineqn:x_1}
0<\frac{x-a}{x-a-d}\leq\frac{x'-a}{x'-a-d}.
\end{align}
Since $a\leq d$, $(a-d)(x-a)\leq (a-d)(x'-a)$, and so
\begin{align}\label{ineqn:x_2}
0<\frac{x-a}{x-d}\leq\frac{x'-a}{x'-d}.
\end{align}
By $-(b+d)(x-a)\leq -(b+d)(x'-a)$,
\begin{align}\label{ineqn:x_3}
0<\frac{x-a}{x-a-b-d}\leq\frac{x'-a}{x'-a-b-d}.
\end{align}
By $-b(x-a-d)\leq -b(x'-a-d)$,
\begin{align}\label{ineqn:x_4}
0<\frac{x-a-d}{x-a-b-d}\leq\frac{x'-a-d}{x'-a-b-d}.
\end{align}
Since $a\in[0, d]$, $(a-b-d)(x-a)\leq (a-b-d)(x'-a)$, and so
\begin{align}
\label{ineqn:x_5}
0<\frac{x-a}{x-b-d}\leq\frac{x'-a}{x'-b-d}.
\end{align}

Since $a\in[0,d]$, $3d-2a\geq0$ and so $0\leq\frac{(3d-2a)(x-a)^2}{(x-a-d)(x-d)}\leq\frac{(3d-2a)(x-a)^2}{(x'-a-d)(x'-d)}$ by (\ref{ineqn:x_1}) and (\ref{ineqn:x_2}). Recall $x'-a-d\geq x'-a-d-b>0$. So $0\leq\frac{1}{x-a-d}+\frac{1}{x-a-d-b}\leq\frac{1}{x'-a-d}+\frac{1}{x'-a-d-b}$. Thus $0\leq[\frac{(x-a)^2}{x-a-d}+\frac{(x-a)^2}{x-a-d-b}]\frac{3d-2a}{(x-a-d)(x-d)}\leq[\frac{(x'-a)^2}{x'-a-d}+\frac{(x'-a)^2}{x'-a-d-b}]\frac{3d-2a}
{(x'-a-d)(x'-d)}$.

By $x-d\geq x'-d>0$ and $a\in[0, d]$, $0\leq \frac{d-a}{x-d}\leq \frac{d-a}{x'-d}$. It follows from (\ref{ineqn:x_3}) that $0\leq \frac{(x-a)^2(d-a)}{(x-d)(x-d-a-b)^2}\leq \frac{(x'-a)^2(d-a)}{(x'-d)(x'-d-a-b)^2}$.
By (\ref{ineqn:x_2}) and (\ref{ineqn:x_3}), $0\leq\frac{a(x-a)^3}{(x-a-b-d)^2(x-d)^2}\leq\frac{a(x'-a)^3}{(x'-a-b-d)^2(x'-d)^2}$.

By $x\geq x'$, $\frac{1}{x-a-b-d}\leq\frac{1}{x'-a-b-d}$ and $1-x-b\leq 1-x'-b$. By the definition of a ramp function, $(1-x-b)^+\leq (1-x'-b)^+$.
Since $1-d-b\geq1-2d>0$ for $d<\frac{1}{2}$, $0\leq\frac{3(x-a-d)(x-a)^3(1-x-b)^+}{(x-b-d)(x-d)(1-d-b)(x-a-b-d)^2}\leq\frac{3(x'-a-d)(x'-a)^3(1-x'-b)^+}{(x'-b-d)(x'-d)(1-d-b)(x'-a-b-d)^2}$ by (\ref{ineqn:x_3}), (\ref{ineqn:x_4}) and (\ref{ineqn:x_5}).

To sum up, we obtain $W^{(10)}_G(P'_0)\geq W^{(10)}_G(P_0)$. So $P'_0$ is a point with $x=1-d$ that achieves the maximum of (P10).
\end{proof}

Thus we may replace (P10) with a new programming (P11) whose maximum value is at least that of (P10) by setting $x = 1 - d$ as follows:

\begin{align*}
W^{(11)}_G
=&[\frac{(1-d-a)^2}{1-a-2d}+\frac{(1-d-a)^2}{1-a-2d-b}]\frac{3d-2a}{(1-a-2d)(1-2d)}+\frac{(1-d-a)^2(d-a)}{(1-2d)(1-2d-a-b)^2}+\\
&\frac{a(1-d-a)^3}{(1-a-b-2d)^2(1-2d)^2}+\frac{3(1-a-2d)(1-d-a)^3(d-b)}{(1-b-2d)(1-2d)(1-d-b)(1-a-b-2d)^2}].
%=&[\frac{(1-d-a)^2}{1-a-2d}+\frac{(1-d-a)^2}{1-a-2d-b}]\frac{3d-2a}{(1-a-2d)(1-2d)}+\frac{(1-d-a)^2(d-a)}{(1-2d)(1-2d-a-b)^2}+\\
%&\frac{a(1-d-a)^3}{(1-a-b-2d)^2(1-2d)^2}+\frac{3(1-a-2d)(1-d-a)^3}{(1-b-2d)(1-2d)(1-a-b-2d)^2}].
\end{align*}
Here is the new programming:
\begin{align*}
&\text{(P11): maximize } W^{(11)}_G \\
& \quad\text{s.t.} \ \ a,b \in [0,d].
\end{align*}
\begin{Corollary}\label{cor:P11>=P10}
 {\rm OPT(P1)$\leq$ OPT(P10)= OPT(P11)}.
\end{Corollary}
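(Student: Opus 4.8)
The plan is to follow the template already used for Corollaries \ref{cor:P9>=P8} and \ref{cor:P10>=P9}: the new program (P11) is simply (P10) with the variable $x$ pinned to its boundary value $x=1-d$, so the corollary reduces to combining the corresponding ``reduction lemma'' with the previous link in the chain of inequalities.

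Concretely, I would proceed in three steps. First, invoke Lemma \ref{lem:P10_maximum}, which guarantees that some maximizer of (P10) has $x=1-d$. Second, verify that substituting $x=1-d$ into $W^{(10)}_G$ produces exactly $W^{(11)}_G$: this is a mechanical simplification, replacing $x-d$ by $1-2d$, $x-a$ by $1-d-a$, $x-a-d$ by $1-a-2d$, and so on, together with the observation that the ramp term becomes $(1-x-b)^+=(d-b)^+=d-b$ since $b\in[0,d]$. Because the constraint $x\in[1-d,1]$ is deleted while the constraints $a,b\in[0,d]$ are kept verbatim, the feasible region of (P11) is precisely the slice $\{x=1-d\}$ of the feasible region of (P10); by Lemma \ref{lem:P10_maximum} this slice still contains an optimal point of (P10), so $\mathrm{OPT(P10)}=\mathrm{OPT(P11)}$. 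Third, recall that Corollary \ref{cor:P10>=P9} gives $\mathrm{OPT(P1)}\le\mathrm{OPT(P9)}=\mathrm{OPT(P10)}$; chaining the two equalities yields $\mathrm{OPT(P1)}\le\mathrm{OPT(P10)}=\mathrm{OPT(P11)}$, which is the claim.

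The only step that requires any care is the algebraic identity $W^{(10)}_G\big|_{x=1-d}=W^{(11)}_G$, and even that is routine bookkeeping once the ramp term $(1-x-b)^+$ is resolved using $b\le d$. There is no genuine obstacle here, since all of the analytic content—monotonicity of the objective in $x$ and the resulting boundary optimality—was already established in Lemma \ref{lem:P10_maximum}; the corollary is purely a recording of its consequence.
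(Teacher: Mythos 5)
Your proposal is correct and follows the same route as the paper: cite Lemma \ref{lem:P10_maximum} to pin $x=1-d$ at an optimum of (P10), note that the resulting slice of the feasible region and restricted objective are exactly (P11) (including resolving $(1-x-b)^+=(d-b)^+=d-b$ since $b\le d$), and chain with Corollary \ref{cor:P10>=P9}. The paper's proof is a one-line citation of these two ingredients; you have simply spelled out the bookkeeping.
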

\begin{proof}
It follows from Corollary \ref{cor:P10>=P9} and Lemma \ref{lem:P10_maximum} that the desired result is obtained.
\end{proof}

We construct a new programming (P12) with the following new objective function.

\begin{align*}
W^{(12)}_G
=&[\frac{(1-d-a)^2}{1-a-2d}+\frac{(1-d-a)^2}{1-a-3d}]\frac{3d-2a}{(1-a-2d)(1-2d)}+\frac{(1-d-a)^2(d-a)}{(1-2d)(1-3d-a)^2}+\\
&\frac{a(1-d-a)^3}{(1-a-3d)^2(1-2d)^2}+\frac{3(1-a-2d)(1-d-a)^3d}{(1-3d)(1-2d)^2(1-a-3d)^2}].
\end{align*}
Here is the new programming:
\begin{align*}
&\text{(P12): maximize } W^{(12)}_G \\
& \quad\text{s.t.} \ \ a \in [0,d].
\end{align*}

\begin{Lemma}\label{lem:P12>=P11}
{\rm OPT(P1)$\leq$ OPT(P11)$\leq$ OPT(P12)}.
%The maximum of (P11) is achieved when $b=d$ holds.
\end{Lemma}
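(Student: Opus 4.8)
The plan is to prove $\text{OPT(P11)} \leq \text{OPT(P12)}$ by a term-by-term comparison of the two objective functions, and then to invoke Corollary \ref{cor:P11>=P10}, which already gives $\text{OPT(P1)} \leq \text{OPT(P11)}$. Since the domain $\{a\in[0,d]\}$ of (P12) is nonempty, closed and bounded and $W^{(12)}_G$ is continuous on it, Theorem \ref{thm:Weierstrass_thm} guarantees that (P12) attains its maximum; similarly (P11) attains its maximum at some point $P_0=(a_0,b_0)$ with $a_0,b_0\in[0,d]$. It therefore suffices to show $W^{(12)}_G(a_0)\geq W^{(11)}_G(P_0)$, since the right-hand side equals $\text{OPT(P11)}$ and the left-hand side is at most $\text{OPT(P12)}$.

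First I would record the positivity estimates needed throughout. For $0\leq d\leq\frac{2}{33}$ and $a_0,b_0\in[0,d]$ one has $1-d-a_0\geq 1-2d>0$, $1-a_0-3d\geq 1-4d>0$, $1-2d>0$, $1-3d>0$, $1-d-b_0\geq 1-2d>0$, $1-b_0-2d\geq 1-3d>0$, and $1-a_0-b_0-2d\geq 1-a_0-3d>0$; hence every denominator occurring in $W^{(11)}_G(P_0)$ and in $W^{(12)}_G(a_0)$ is strictly positive. Together with $3d-2a_0\geq 0$, $d-a_0\geq 0$, $a_0\geq 0$ and $d-b_0\geq 0$ (all immediate from $a_0,b_0\leq d$), this shows that each of the four summands of $W^{(11)}_G(P_0)$ is nonnegative. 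Since $W^{(11)}_G$ and $W^{(12)}_G$ are both written as a sum of four terms in an evidently matching order, it is enough to bound the $k$-th term of $W^{(11)}_G(P_0)$ by the $k$-th term of $W^{(12)}_G(a_0)$ for $k=1,2,3,4$.

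The comparison is then a routine monotonicity check: $W^{(12)}_G$ is obtained from $W^{(11)}_G$ by replacing each occurrence of $b$ by $0$ or by $d$, always in the direction that enlarges the term. In a numerator factor of the form $d-b$ (the factor $d-b_0$ of the fourth term) I would use $d-b_0\leq d$; in a denominator factor of the form $1-\alpha-b$ (for instance $1-a_0-2d-b_0$ in the first term, $1-2d-a_0-b_0$ in the second, $1-a_0-b_0-2d$ in the third and fourth, and $1-b_0-2d$ and $1-d-b_0$ in the fourth) I would use $1-\alpha-b_0\geq 1-\alpha-d>0$, so that $\frac{1}{1-\alpha-b_0}\leq\frac{1}{1-\alpha-d}$. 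In each term all remaining factors are nonnegative and free of $b$, so these substitutions only increase the term. Summing over $k$ yields $W^{(11)}_G(P_0)\leq W^{(12)}_G(a_0)$, and the latter is at most $\text{OPT(P12)}$; hence $\text{OPT(P1)}\leq\text{OPT(P11)}\leq\text{OPT(P12)}$.

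I do not expect a genuine obstacle. The two points requiring mild care are, first, checking that every denominator remains strictly positive after the substitutions — this is precisely where $d\leq\frac{2}{33}$ (really just $d<\frac15$) is used — and, second, confirming that each of the four summands of $W^{(11)}_G$ is nonnegative, since otherwise the term-by-term bounding could reverse the desired inequality.
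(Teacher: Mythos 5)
Your proposal is correct and follows essentially the same route as the paper's proof: compare the four summands of $W^{(11)}_G$ and $W^{(12)}_G$ one by one, noting that each summand of $W^{(11)}_G$ is nonnegative, and that replacing $b$ by $d$ in every denominator factor of the form $1-\alpha-b$ (using $1-\alpha-b\geq 1-\alpha-d>0$, which needs only $d<\frac15$) and by $0$ in the numerator factor $d-b$ only increases each term. The paper carries this out with the same inequalities (its display labelled (4.13) is your $\frac{1}{1-a-2d-b}\leq\frac{1}{1-a-3d}$), then invokes Corollary~\ref{cor:P11>=P10} exactly as you do; no genuine difference.
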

\begin{proof}
Since $0\leq a,b\leq d$, $1-a-2d-b\geq1-a-3d\geq 1-4d>0$ for $d<\frac{1}{5}$, and so
\begin{align}\label{ineqn:b_1}
0<\frac{1}{1-a-2d-b}\leq\frac{1}{1-a-3d}.
\end{align}
%$0<\frac{(1-d-a)^2}{1-a-2d-b}\leq\frac{(1-d-a)^2}{1-a-3d}$.
By $3d-2a\geq0$ and $1-2d\geq 1-a-2d\geq1-a-2d-b>0$, $\frac{3d-2a}{(1-a-2d)(1-2d)}>0$, and so
$$[\frac{(1-d-a)^2}{1-a-2d}+\frac{(1-d-a)^2}{1-a-2d-b}]\frac{3d-2a}{(1-a-2d)(1-2d)}\leq
[\frac{(1-d-a)^2}{1-a-2d}+\frac{(1-d-a)^2}{1-a-3d}]\frac{3d-2a}{(1-a-2d)(1-2d)}.$$
By (\ref{ineqn:b_1}) and $d-a\geq0$, $$0\leq\frac{(1-d-a)^2(d-a)}{(1-2d)(1-2d-a-b)^2}\leq\frac{(1-d-a)^2(d-a)}{(1-2d)(1-3d-a)^2}.$$
Since $a\geq 0$ and $1-d-a\geq 1-2d>0$, by (\ref{ineqn:b_1}), $$0\leq\frac{a(1-d-a)^3}{(1-a-b-2d)^2(1-2d)^2}\leq\frac{a(1-d-a)^3}{(1-a-3d)^2(1-2d)^2}.$$
Since $0\leq a\leq d$, $1-b-2d\geq1-a-2d-b>0$ and so $0<\frac{1}{1-b-2d}\leq \frac{1}{1-3d}$. By $0\leq b\leq d$, $1-d-b\geq1-2d>0$, and so $0<\frac{1}{1-b-d}\leq \frac{1}{1-2d}$. It follows from $b\geq0$ and (\ref{ineqn:b_1}) that
$$0\leq\frac{3(1-a-2d)(1-d-a)^3(d-b)}{(1-b-2d)(1-2d)(1-d-b)(1-a-b-2d)^2}\leq\frac{3(1-a-2d)(1-d-a)^3d}{(1-3d)(1-2d)^2(1-a-3d)^2}.$$

Thus, $W^{(12)}_G\geq W^{(11)}_G$. So we complete the proof by Corollary \ref{cor:P11>=P10}.
\end{proof}

Let
\begin{align*}
W^{(13)}_G
%=&[\frac{(1-d)^2}{1-2d}+\frac{(1-d)^2}{1-3d}]\frac{3d}{(1-2d)^2}+\frac{(1-d)^2d}{(1-2d)(1-3d)^2}+\frac{d(1-d)^3}{(1-3d)^2(1-2d)^2}\\
%&+\frac{3(1-2d)(1-d)^3}{(1-3d)(1-2d)(1-3d)^2}]\\
=&[\frac{(1-d)^2}{1-2d}+\frac{(1-d)^2}{1-3d}]\frac{3d}{(1-2d)^2}+\frac{(1-d)^2d}{(1-2d)(1-3d)^2}+\frac{d(1-d)^3}{(1-3d)^2(1-2d)^2}\\
&+\frac{3(1-d)^3d}{(1-3d)^3(1-2d)}].
\end{align*}
%\begin{align*}
%W^{(13)}_G
%=&[\frac{(1-d)^2}{1-2d}+\frac{(1-d)^2}{1-3d}]\frac{3d}{(1-2d)^2}+\frac{(1-d)^2d}{(1-2d)(1-3d)^2}+\frac{d(1-2d)(1-d)^2}{(1-3d)(1-4d)(1-2d)^2}\\
%&+\frac{3(1-2d)(1-d)^3}{(1-3d)(1-2d)(1-3d)^2}]\\
%=&[\frac{(1-d)^2}{1-2d}+\frac{(1-d)^2}{1-3d}]\frac{3d}{(1-2d)^2}+\frac{(1-d)^2d}{(1-2d)(1-3d)^2}+\frac{d(1-d)^2}{(1-3d)(1-4d)(1-2d)}\\
%&+\frac{3(1-d)^3}{(1-3d)^3}].
%\end{align*}
\begin{Lemma}\label{lem:P13>=P12}
{\rm OPT(P1)$\leq$ OPT(P12)$\leq W^{(13)}_G$ }.
%For $d<\frac{1}{8}$, {\rm OPT(P1)$\leq$ OPT(P12)$\leq W^{(13)}_G$ }.
\end{Lemma}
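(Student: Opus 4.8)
The first inequality $\mathrm{OPT(P1)}\leq\mathrm{OPT(P12)}$ is already contained in Lemma~\ref{lem:P12>=P11}, so the content to prove is $\mathrm{OPT(P12)}\leq W^{(13)}_G$. Since $(\mathrm{P12})$ has compact domain $a\in[0,d]$ and $W^{(12)}_G$ is continuous there (for $d\leq\frac{2}{33}$ every denominator is at least $1-4d>0$), Theorem~\ref{thm:Weierstrass_thm} applies, so it suffices to show $W^{(12)}_G\leq W^{(13)}_G$ for every $a\in[0,d]$ and every $d\in[0,\frac{2}{33}]$. Write $W^{(12)}_G=A(a)+B(a)+C(a)+D(a)$, the four displayed summands in order. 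The plan is to bound each of $A,B,C,D$ by its maximum over $[0,d]$ and to observe that these four maxima are respectively the first, second, third and fourth summand of $W^{(13)}_G$; adding the four bounds gives the claim.

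For $A$ and $D$ I would use monotonicity. Every factor appearing in $A$ and in $D$ is strictly positive on $[0,d]$ (this uses $d\leq\frac{2}{33}$; e.g.\ $3d-2a\geq d>0$ and $1-a-3d\geq1-4d>0$), so it is enough to check each factor is non-increasing in $a$. In $A$, after substituting $t=1-d-a$ the first two factors are $\frac{t^{2}}{t-d}$ and $\frac{t^{2}}{t-2d}$, with $t$-derivatives $\frac{t(t-2d)}{(t-d)^{2}}$ and $\frac{t(t-4d)}{(t-2d)^{2}}$, both $\geq0$ because $t\geq1-2d>4d$, while $\frac{3d-2a}{(1-a-2d)(1-2d)}$ has $a$-derivative of the sign of $7d-2<0$; hence $A$ is non-increasing and $A(a)\leq A(0)$. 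In $D=\frac{3d}{(1-3d)(1-2d)^{2}}\cdot\frac{(1-a-2d)(1-d-a)^{3}}{(1-a-3d)^{2}}$, the substitution $u=1-a$ gives the non-constant factor $\frac{(u-2d)(u-d)^{3}}{(u-3d)^{2}}$, whose logarithmic derivative $\frac{1}{u-2d}+\frac{3}{u-d}-\frac{2}{u-3d}$ is positive since $\frac{3}{u-d}-\frac{2}{u-3d}=\frac{u-7d}{(u-d)(u-3d)}>0$ (as $u\geq1-d>7d$); hence $D(a)\leq D(0)$. Substituting $a=0$ identifies $A(0)$ and $D(0)$ with the first and fourth summands of $W^{(13)}_G$.

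For $B(a)=\frac{1}{1-2d}\cdot\frac{(1-d-a)^{2}(d-a)}{(1-3d-a)^{2}}$ the logarithmic derivative of the non-constant factor equals $\frac{4d}{(1-3d-a)(1-d-a)}-\frac{1}{d-a}$, which is $\leq0$ iff $4d(d-a)\leq(1-3d-a)(1-d-a)$; on $[0,d]$ the left side is $\leq4d^{2}$ and the right side is $\geq(1-4d)(1-2d)$, so it suffices that $4d^{2}\leq(1-4d)(1-2d)$, i.e.\ $4d^{2}-6d+1\geq0$, which holds because $\frac{2}{33}<\frac{3-\sqrt{5}}{4}$. Hence $B(a)\leq B(0)$, the second summand of $W^{(13)}_G$. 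Finally $C(a)=\frac{1}{(1-2d)^{2}}\cdot\frac{a(1-d-a)^{3}}{(1-a-3d)^{2}}$ vanishes at $a=0$ and, for $a>0$, has logarithmic derivative $\frac{1}{a}-\frac{3}{1-d-a}+\frac{2}{1-a-3d}>0$, since $\frac{1}{a}\geq\frac{1}{d}\geq\frac{33}{2}$ while $\frac{3}{1-d-a}<\frac{3}{1-2d}\leq\frac{99}{29}$; so $C$ is non-decreasing and $C(a)\leq C(d)=\frac{d(1-2d)}{(1-4d)^{2}}$. It then remains to verify $\frac{d(1-2d)}{(1-4d)^{2}}\leq\frac{d(1-d)^{3}}{(1-3d)^{2}(1-2d)^{2}}$, equivalently $(1-2d)^{3}(1-3d)^{2}\leq(1-d)^{3}(1-4d)^{2}$ on $[0,\frac{2}{33}]$ — a one-variable polynomial inequality that is an equality at $d=0$ with nonnegative derivative thereafter, which I would settle by a routine estimate. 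Adding the four bounds yields $W^{(12)}_G\leq W^{(13)}_G$, hence $\mathrm{OPT(P12)}\leq W^{(13)}_G$.

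The proof is mostly bookkeeping rather than one sharp difficulty: every monotonicity statement must be re-checked against the explicit bound $d\leq\frac{2}{33}$, and the only summand whose maximum is not attained at $a=0$, namely $C$ (which is increasing in $a$), forces the extra comparison $(1-2d)^{3}(1-3d)^{2}\leq(1-d)^{3}(1-4d)^{2}$ between $C(d)$ and the third summand of $W^{(13)}_G$; keeping straight the direction in which each linear factor moves, together with that last polynomial inequality, is the part most likely to need care. The endpoint $a=d$ is harmless: $B$ contains the vanishing factor $d-a$ and $C$ simply attains its maximum, so the compactness/continuity set-up is unproblematic.
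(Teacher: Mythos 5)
Your proof is correct and follows the same overall strategy as the paper: split $W^{(12)}_G$ into the four displayed summands and bound each one separately over $a\in[0,d]$, identifying each bound with the corresponding summand of $W^{(13)}_G$. The treatment of $A$, $B$, $D$ is essentially the paper's argument repackaged (the paper bounds individual factors — the decreasing $g_1,g_2,g_3$ together with $\frac{3d-2a}{1-2d-a}\leq\frac{3d}{1-2d}$, $\frac{d-a}{1-3d-a}\leq\frac{d}{1-3d}$, $a\leq d$, $1-a-2d\leq 1-2d$ — rather than computing logarithmic derivatives of the full summands, but the substance is the same). The one genuine divergence is your handling of the third summand $C$: the paper simply combines $a\leq d$ with $g_3(a)\leq g_3(0)$ to get $C(a)\leq d\,g_3(0)/(1-2d)^2$, which is \emph{exactly} the third term of $W^{(13)}_G$, with no further work. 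You instead prove $C$ is increasing (true, but not needed), locate its maximum $C(d)=\frac{d(1-2d)}{(1-4d)^2}$, and then must prove an extra polynomial inequality $(1-2d)^3(1-3d)^2\leq(1-d)^3(1-4d)^2$ to match the paper's bound; you leave that step as ``a routine estimate.'' The inequality is indeed true — it reduces to $d(1-14d+61d^2-100d^3+56d^4)\geq0$, which clearly holds on $[0,\tfrac{2}{33}]$ — so there is no gap in the logic, but this detour is longer than necessary and the final check ought to be carried out rather than asserted. Everything else, including the monotonicity verifications via $t=1-d-a$, $u=1-a$, and the bound $4d^2-6d+1\geq0$ for $B$, is correct under $d\leq\tfrac{2}{33}$.
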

\begin{proof}
Let $g_1(a)=\frac{(1-d-a)^2}{1-a-2d}$ and $g_2(a)=\frac{(1-d-a)^2}{1-a-3d}$. Their derived functions are
$$g'_1(a)=\frac{(1-d-a)(a+3d-1)}{(1-a-2d)^2}\ \text{ and }\ g'_2(a)=\frac{(1-d-a)(a+5d-1)}{(1-a-3d)^2}.$$
Since $0\leq d<\frac{1}{6}$, $d<1-5d\leq 1-3d$, and so $g'_1(a)<0$ and $g'_2(a)<0$ for $a\in[0,d]$. That is, $g_1(a)$ and $g_2(a)$ are two decreasing functions for $a\in[0,d]$. Thus for $a\in [0,d]$, $$g_1(a)\leq g_1(0)\ \text{ and }\ g_2(a)\leq g_2(0).$$
Since $1-a-2d\geq1-a-3d\geq1-4d>0$ for $d<\frac{1}{4}$, $$0\leq\frac{(1-d-a)^2}{1-a-2d}+\frac{(1-d-a)^2}{1-a-3d}\leq \frac{(1-d)^2}{1-2d}+\frac{(1-d)^2}{1-3d}.$$

By $d<\frac{2}{7}$ and $a\geq 0$, $-2a(1-2d)< -3da$, and so $(3d-2a)(1-2d)\leq 3d(1-2d-a)$. Since $a\leq d$ and $d<\frac{1}{3}$, $1-2d\geq1-a-2d\geq1-3d>0$. Thus $$0<\frac{3d-2a}{1-2d-a}\leq \frac{3d}{1-2d}.$$

Since $d<\frac{1}{4}$ and $a\geq0$, $-a(1-3d)<-ad$, and so $(d-a)(1-3d)\leq d(1-3d-a)$. By $a\leq d$, $1-3d\geq 1-3d-a>0$. Thus
$0\leq\frac{d-a}{1-3d-a}\leq \frac{d}{1-3d}$. Recall that $g_2(a)\leq g_1(0)$ and $1-2d\geq 1-3d>0$. So
$$0\leq\frac{(1-d-a)^2(d-a)}{(1-2d)(1-3d-a)^2}\leq \frac{(1-d)^2d}{(1-2d)(1-3d)^2}.$$

Let $g_3(a)=\frac{(1-d-a)^3}{(1-a-3d)^2}$. Its derived function is $$g'_3(a)=\frac{(1-d-a)^2(1-a-3d)(a+7d-1)}{(1-a-3d)^4}.$$
Since $d<\frac{1}{8}$, $g'_3(a)<0$ for $a\in[0,d]$. That is, $g_3(a)$ is a decreasing function for $a\in[0,d]$. So $g_3(a)\leq g_3(0)$. By $a\leq d$ and $1-2d>0$,
$$\frac{a(1-d-a)^3}{(1-a-3d)^2(1-2d)^2}\leq \frac{d(1-d)^3}{(1-3d)^2(1-2d)^2}.$$

Since $a\geq0$, $1-a-2d\leq 1-2d$. By $d<\frac{1}{3}$, $1-3d>0$ and $1-2d>0$. Recall that $g_3(a)\leq g_3(0)$, and so $$\frac{3(1-a-2d)(1-d-a)^3d}{(1-3d)(1-2d)^2(1-a-3d)^2}\leq \frac{3(1-2d)(1-d)^3d}{(1-3d)(1-2d)^2(1-3d)^2}=\frac{3(1-d)^3d}{(1-3d)^3(1-2d)}.$$

To sum up, $W_{G}^{(13)}\geq W_{G}^{(12)}$, and so  $W_{G}^{(13)}\geq $ OPT(12). By Lemma \ref{lem:P12>=P11}, we complete the proof.
\end{proof}

\begin{proof}[{\bf Proof of Theorem \ref{thm:W'_G(O)<=1}}]
Lemmas \ref{lem:program->thm2.5} and \ref{lem:P13>=P12} imply that if $W^{(13)}_G\leq 1$, then the desired conclusion is obtained.
To prove $W^{(13)}_G\leq 1$, It suffices to show $W(d)<0$, where
\begin{align*}
W(d)=&3d(1-d)^2(1-3d)^3+3d(1-d)^2(1-3d)^2(1-2d)+(1-d)^2d(1-2d)^2(1-3d)\\
&+d(1-d)^3(1-3d)(1-2d)+3(1-d)^3d(1-2d)^2-(1-3d)^3(1-2d)^3\\
=&-1+26d-194d^2+669d^3-1192d^4+1065d^5-381d^6.
%=&[\frac{(1-d)^2}{1-2d}+\frac{(1-d)^2}{1-3d}]\frac{3d}{(1-2d)^2}+\frac{(1-d)^2d}{(1-2d)(1-3d)^2}+\frac{d(1-d)^3}{(1-3d)^2(1-2d)^2}\\
%&+\frac{3(1-d)^3d}{(1-3d)^3(1-2d)}].
\end{align*}
The first derivative, second derivative, third derivative and fourth derivative are
$$W'(d)=26-388d+2007d^2-4768 d^3 + 5325 d^4 - 2286 d^5,$$
$$W''(d)=-388 + 4014 d - 14304 d^2 + 21300 d^3 - 11430 d^4,$$
$$W'''(d)=4014 - 28608 d + 63900 d^2 - 45720 d^3$$
and
$$W^{(4)}(d)=-28606 + 127800 d - 137160 d^2,$$
respectively. By $\frac{-127800}{2\times(-137160)}>\frac{2}{33}$, $W^{(4)}(d)$ is an increasing function on $[0,\frac{2}{33}]$.
Since $W^{(4)}(\frac{2}{33})=\frac{-2585086}{121}<0$, $$W^{(4)}(d)<0$$ for any $d\in [0,\frac{2}{33}]$. That is, $W'''(d)$ is decreasing on $[0,\frac{2}{33}]$.
Since $W'''(\frac{2}{33})=\frac{10001326}{3993}>0$, $$W'''(d)>0$$ for any $d\in [0,\frac{2}{33}]$. We have $W''(d)$ is increasing on $[0,\frac{2}{33}]$.
It follows from $W''(\frac{2}{33})=-\frac{25389224}{131769}$ that $$W''(d)<0$$ for any $d\in [0,\frac{2}{33}]$. So $W'(d)$ is a decreasing function on $[0,\frac{2}{33}]$.
By $W'(\frac{2}{33})=\frac{38549710}{4348377}>0$, $$W'(d)>0$$ for any $d\in [0,\frac{2}{33}]$. We have $W(d)$ is an increasing function on $[0,\frac{2}{33}]$.
Note that $W(\frac{2}{33})=-\frac{1345519}{430489323}<0$. Thus for $d\in [0,\frac{2}{33}]$, $$W(d)<0.$$

\end{proof}
\section{Concluding remarks}

This paper is devoted to examining the fractional $K_4$-decomposition and $K_4$-decomposition of dense graphs. Theorem \ref{thm:frac_K_4_decom} implies that any graph on $n$ vertices with minimum degree at least $\frac{31}{33}n$ has a fractional $K_4$-decomposition. Combining the result of Glock, K\"{u}hn, Lo, Montgomery and Osthus, all large enough $K_4$-divisible graphs on $n$ vertices with minimum degree at least $(\frac{31}{33}+\varepsilon)n$ admit a $K_4$-decomposition.

%Conjecture \ref{conj:Nash-Williams_conj} conjectures that every large enough $K_4$-divisible graph on $n$ vertices with minimum degree at least $\frac{4}{5}n$ admits a $K_4$-decomposition.
%Thus an important task is to improve the upper bound of $\delta^*_{K_4}$ and even determine the correct value of $\delta^*_{K_4}$.
For the $K_4$-decomposition of dense graphs, an important task is to improve the upper bound of $\delta^*_{K_4}$.
To obtain Theorem \ref{thm:frac_K_4_decom}, we solve a nonlinear programming by slowly reducing the number of variables in Section \ref{sec:solve_program}. We construct some new programmings such that each of their optimum values is strictly greater than that of the original programming in this process. It is natural to wonder if the value of $d$ could be improved by solving the original programming using a different method.

Finally, this paper only investigates the case of $K_4$ subject to the limitations of using the nonlinear programming. A natural question is how to reduce the upper bound of $\delta^*_{K_r}$ for $r\geq5$.


\begin{thebibliography}{99}
\bibitem{BKLO16}
B.~Barber, D.~K\"{u}hn, A.~Lo, and D.~Osthus, Edge-decompositions of graphs with high minimum degree, Adv. Math., 288 (2016) 337--385.

\bibitem{BKLMO17}
B.~Barber, D.~K\"uhn, A.~Lo, R.~Montgomery, and D.~Osthus, Fractional clique decompositions of dense graphs and hypergraphs, J. Comb. Theory, Ser. B,  127 (2017) 148--186.

\bibitem{Chachuat}
B. Chachuat, Nonlinear and Dynamic Optimization: From Theory to Practice, Automatic Control Laboratory, EPFL, Switzerland, 2007.
%B. Chachuat, Nonlinear and Dynamic Optimization: From Theory to Practice, Laboratoire dmAutomatique, Ecole Polytechnique Federale de Lausanne.

\bibitem{CKKO19}
P.~Condon, J.~Kim, D.~K\"uhn, and D.~Osthus, A bandwidth theorem for approximate decompositions, Proc. Lond. Math. Soc., 118 (2019) 1393--1449.

\bibitem{DHLP25}
M.~Delcourt, C.~Henderson, T.~Lesgourgues, and L.~Postle, Erd\H{o}s meets Nash-Williams, 2025, arXiv:2507.23624.

\bibitem{DHLP25_1}
M.~Delcourt, C.~Henderson, T.~Lesgourgues, and L.~Postle, Beyond Nash-Williams: counterexamples to clique decomposition thresholds for all cliques larger than triangles, 2025, arXiv:2508.20819.

\bibitem{DP21}
M.~Delcourt and L.~Postle, Progress towards Nash-Williams' conjecture on triangle decompositions, J. Comb. Theory, Ser. B, 146 (2021) 382--416.

\bibitem{DT97}
D.~Dor and M.~Tarsi, Graph decomposition is NP-complete: a complete proof of Holyer's conjecture, SIAM J. Comput., 26 (1997) 1166--1187.

\bibitem{Dross}
F.~Dross, Fractional triangle decompositions in graphs with large minimum degree, SIAM J. Discrete Math., 30 (2015) 36--42.

\bibitem{Dukes12}
P.J.~Dukes, Rational decomposition of dense hypergraphs and some related eigenvalue estimates, Linear Algebra Appl., 436 (2012) 3736--3746.

\bibitem{DH20}
P.J.~Dukes and D.~Horsley, On the minimum degree required for a triangle decomposition, SIAM J. Discrete Math., 34 (2020) 597--610.

\bibitem{Garaschuk}
K.~Garaschuk, Linear methods for rational triangle decompositions, Ph.D. thesis, University of Victoria, 2014.

\bibitem{GKLMO19}
S.~Glock, D.~K\"{u}hn, A.~Lo, R.~Montgomery, and D.~Osthus, On the decomposition threshold of a given graph, J. Comb. Theory, Ser. B, 139 (2019) 47--127.

\bibitem{GKO21}
S.~Glock, D.~K\"{u}hn, and D.~Osthus, Extremal aspects of graph and hypergraph decomposition problems, in: K.K.~Dabrowski, M.~Gadouleau, N.~Georgiou, M.~Johnson, G.B.~Mertzios, and D.~Paulusma (Eds.), Surveys in Combinatorics 2021, Cambridge University Press, 2021, pp. 235--266.

\bibitem{Gustavsson}
T.~Gustavsson, Decompositions of large graphs and digraphs with high minimum degree, Ph.D. Thesis, University of Stockholm, 1991.

\bibitem{Hanani}
H.~Hanani, The existence and construction of balanced incomplete block designs, Ann. Math. Stat., 32 (1961) 361--386.

\bibitem{Kirkman}
T.P.~Kirkman, On a problem in combinations, Cambridge and Dublin Math. J., 2 (1847) 191--204.

\bibitem{Montgomery19}
R.~Montgomery, Fractional clique decompositions of dense graphs, Random Struct. Algorithms, 54 (2019) 779--796.

\bibitem{Nash-Williams}
C.S.J.~Nash-Williams, An unsolved problem concerning decomposition of graphs into triangles, in: Combinatorial Theory and Its Applications, III, 1970, pp. 1179--1183.

\bibitem{Wilson76}
R.M.~Wilson, Decompositions of complete graphs into subgraphs isomorphic to a given graph, in: Proceedings of the Fifth British Combinatorial Conference, Univ. Aberdeen, Aberdeen, 1975, in: Congressus Numerantium, vol. XV, Utilitas Math., Winnipeg, Man., 1976, pp. 647--659.

\bibitem{Yuster05}
R.~Yuster, Asymptotically optimal $K_k$-packings of dense graphs via fractional $K_k$-decompositions, J. Comb. Theory, Ser. B, 95 (2005) 1--11.




\end{thebibliography}
\end{document}